\documentclass{article}

\usepackage{arxiv}

\usepackage[utf8]{inputenc} 
\usepackage[T1]{fontenc}    
\usepackage{hyperref}       
\usepackage{url}            
\usepackage{booktabs}       
\usepackage{amsfonts}       
\usepackage{mathrsfs}         
\usepackage{amsmath,amssymb,bbm}
\usepackage{amsthm}
\usepackage{microtype}      
\usepackage{tikz-cd}
\usepackage{quiver}

\newtheorem{theorem}{Theorem}[section]

\newtheorem{proposition}[theorem]{Proposition}
\newtheorem{lemma}[theorem]{Lemma}
\newtheorem{corollary}[theorem]{Corollary}
\newtheorem{conjecture}[theorem]{Conjecture}

\newtheorem{example}{Example}
\newtheorem{remark}{Remark}

\newcommand{\MW}{Milnor-Witt\ }
\newcommand{\rMW}{\mathrm{MW}}
\newcommand{\KMW}{\mathrm{K}^\mathrm{MW}}

\newcommand{\tbb}[1]{\tilde{\mathbb{#1}}}
\newcommand{\wt}[1]{\widetilde{#1}}
\newcommand{\Spec}{\mrm{Spec}\ }
\newcommand{\af}{\mathbb{A}}
\newcommand{\afnz}[1]{\mathbb{A}^{#1}\setminus \{0\}}
\newcommand{\nonZero}{\setminus \{0\}}

\newcommand{\bZ}{\mathbb{Z}}
\newcommand{\tbZ}{\tbb{Z}}
\newcommand{\mrm}[1]{\mathrm{#1}}
\newcommand{\inv}{^{-1}}
\newcommand{\Gm}{\mathbb{G}_m}

\newcommand{\Sp}{\mrm{Sp}}
\newcommand{\GL}{\mrm{GL}}
\newcommand{\SL}{\mrm{SL}}
\newcommand{\MSp}{\mrm{MSp}}
\newcommand{\BSp}{\mrm{BSp}}
\newcommand{\SH}{\mathcal{SH}}
\newcommand{\Da}{D_{\af^1}}
\newcommand{\DM}{\mrm{DM}}
\newcommand{\DMt}{\widetilde{\mathrm{DM}}}

\newcommand{\Mt}{\wt{\mrm{M}}}
\newcommand{\Meta}{\wt{\mrm{M}}_{\eta}}
\newcommand{\HH}{\mathrm{H}_{\rMW}}
\newcommand{\Heta}{\mathrm{H}_{\eta}}

\newcommand{\aBra}[1]{\left<#1\right>}

\newcommand{\rHom}{\mathrm{Hom}}

\newcommand{\xr}[1]{\xrightarrow{#1}}

\title{\MW motivic cohomology and linear algebraic groups}

\date{\ } 					

\author{Keyao Peng}

\begin{document}
\maketitle
\begin{abstract}
	This article presents two key computations in MW-motivic cohomology. Firstly, we compute the MW-motivic cohomology of the symplectic groups $\Sp_{2n}$ for any $n\in\mathbb{N}$ using the $\Sp$-orientation and the associated Borel classes. 
	
	Secondly, following the classical computations and using the analogue in $\af^1$-homotopy of the Leray spectral sequence, we compute the $\eta$-inverted MW-motivic cohomology of general Stiefel varieties, obtaining in particular the computation of the $\eta$-inverted MW-motivic cohomology of the general linear groups $\GL_n$ and the special linear groups $\SL_n$ for any $n\in\mathbb{N}$.
	
	Finally, we determine the multiplicative structures of these total cohomology groups.
\end{abstract}
\tableofcontents
\section{Introduction}

This article aims to provide a comprehensive overview of the interplay between \MW motivic cohomology and linear algebraic groups and to demonstrate the usefulness of \MW motivic cohomology in understanding the properties and structure of these groups. Linear algebraic groups play a fundamental role in various mathematical disciplines. In order to study them, cohomology has emerged as a critical invariant. In this article, we compute the cohomology of an algebraic group $G$, seen as a variety, using the \MW motivic cohomology $\HH$ instead of employing the more commonly used group cohomology of the classifying space $BG$ (the $BG$ here refers to a geometric model described in \cite[\S 4.2]{morel19991}), although the cohomology of $G$ and $BG$ are connected (section \ref{ComputeSp}).

For context, recall that the \MW motivic cohomology $\HH$ is defined as the unit object in the tensor triangulated category $\DMt(S)$ of Milnor-Witt motives over $S$ \cite{bachmann2020milnor}, and can be seen in a precise sense as an analogue of singular cohomology in topology. In the sequel, we denote by $\Mt(X)$ the \MW motive of a scheme $X$. If $x\in X(S)$, we denote by $\Mt(X,x)$ the reduced \MW motive of $X$. For instance, if $n\geq 1$, we obtain for $\afnz{n}:=\mathbb{A}^n_S\setminus \{0\}$ the following motives
\[
\Mt(\afnz{n})=\Mt(S)\oplus \Mt(S)(n)[2n-1],
\]
while $\Mt(\afnz{n},x)=\Mt(S)(n)[2n-1]$ for any $S$-rational point $x$.

For the original motivic cohomology, the computations and motivic decompositions of split reductive groups \cite{biglari2012motives} and Stiefel varieties \cite{williams2012motivic} in motivic cohomology are well known. At present, there are only limited results for \MW motivic cohomology and the main goal of this article is to fill this gap. We present two original results using different methods. Firstly, we compute the \MW motivic decomposition of $\Sp_{2n}$ in $\DMt(S)$, over any reasonable base $S$, by utilizing the $\Sp$-orientation of \MW motivic cohomology. The final result is the following. 
\begin{theorem}[Theorem \ref{SpMain}]
	 For any $n\geq 1$, We have an isomorphism in $\DMt(S)$
	\[ \Mt(\Sp_{2n})\cong  \Mt(\afnz{2n})\otimes \Mt(\Sp_{2n-2}),\]
	yielding a decomposition in $\DMt(S)$ of the form
	\[\Mt(\Sp_{2n})\cong \Mt(\afnz{2n})\otimes \Mt(\afnz{2n-2})\otimes\ldots \otimes \Mt(\afnz{2}) \cong \bigoplus_{1\leq i_1<\ldots<i_j \leq n} \tbZ(d(i_1,\ldots,i_j))  .\] 
	where $\tbZ(d(i_1,\ldots,i_j))$ denotes $\tbZ(\sum_{l=1}^j 2i_l)[4(\sum_{l=1}^j i_l) -j] $.
\end{theorem}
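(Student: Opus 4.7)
The plan is to proceed by induction on $n$. The inductive step is the isomorphism $\Mt(\Sp_{2n})\cong \Mt(\afnz{2n})\otimes \Mt(\Sp_{2n-2})$, and iterating it gives the tensor decomposition, which then expands into the final direct sum by distributing each factor $\Mt(\afnz{2i})=\tbZ\oplus \tbZ(2i)[4i-1]$. The geometric input is the morphism $p:\Sp_{2n}\to \afnz{2n}$ sending a symplectic matrix to its first column. Completing the first column to a symplectic basis and taking the induced action on its symplectic complement identifies $p$ as a Zariski-locally trivial $\Sp_{2n-2}$-torsor; combined with the Gysin-type triangle attached to the open immersion $\afnz{2n}\hookrightarrow \af^{2n}$, this yields a distinguished triangle in $\DMt(S)$ relating $\Mt(\Sp_{2n})$, $\Mt(\Sp_{2n-2})$ and the Tate twist $\Mt(\Sp_{2n-2})(2n)[4n-1]$.

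The core of the argument is splitting this triangle, and here the $\Sp$-orientation of $\HH$ intervenes. Associated to the tautological rank-$2n$ symplectic bundle on $\BSp_{2n}$ there is a top Borel class $b_n\in\HH^{4n,2n}(\BSp_{2n})$; its transgression in the Borel construction (or equivalently, its translation via the principal action of $\Sp_{2n}$ on $\af^{2n}$) produces a class $\tau_n\in\HH^{4n-1,2n}(\Sp_{2n})$ whose restriction along $p$ generates the top summand of $\Mt(\afnz{2n})$. Combined with the inductively known generators of $\HH^*(\Sp_{2n-2})$, this class furnishes, via a Leray-Hirsch style argument, the data needed to split the above triangle. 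Once the inductive isomorphism is obtained, the final expansion of the tensor product into the direct sum is bookkeeping: distributing $\bigotimes_{i=1}^n(\tbZ\oplus\tbZ(2i)[4i-1])$ yields one summand $\tbZ(d(i_1,\ldots,i_j))=\tbZ\bigl(\sum_{l=1}^j 2i_l\bigr)\bigl[\sum_{l=1}^j(4i_l-1)\bigr]$ per subset $\{i_1<\ldots<i_j\}\subseteq\{1,\ldots,n\}$, which matches the stated indexing.

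The main obstacle is precisely the splitting step. In $\DMt(S)$, the groups $\rHom_{\DMt(S)}(\tbZ(a)[b],\tbZ(c)[d])$ contain non-trivial Witt-theoretic contributions, so the triangle cannot split for purely formal reasons, in contrast to the situation in classical motivic cohomology where an analogous decomposition of $\Sp_{2n}$ is forced by formal Tate-theoretic arguments. The role of the $\Sp$-orientation, as announced in the introduction, is to provide the global cohomology class required for the splitting by promoting $b_n$ to a class on $\Sp_{2n}$; the delicate technical heart of the argument is the verification that this class restricts correctly on the fibre $\Sp_{2n-2}$ and is compatible with the inductively constructed generators.
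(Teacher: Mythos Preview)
Your proposal is correct and follows essentially the same route as the paper: induction on $n$, the cofiber sequence coming from the fibration $\Sp_{2n-2}\to\Sp_{2n}\to\afnz{2n}$, and splitting the resulting triangle in $\DMt(S)$ using the looped Borel classes $\beta(n)_j\in\HH^{4j-1,2j}(\Sp_{2n})$ furnished by the $\Sp$-orientation, via a Leray--Hirsch argument. Two small points of phrasing: the map $p$ is a Zariski-locally trivial fibre bundle with fibre $\Sp_{2n-2}$, not a $\Sp_{2n-2}$-torsor; and rather than ``restriction along $p$'', the relevant compatibility is that $\beta(n)_n$ equals a unit times $p^*(\theta_{2n})$, while the remaining $\beta(n)_j$ restrict along the fibre inclusion $i:\Sp_{2n-2}\hookrightarrow\Sp_{2n}$ to the inductive generators $\beta(n-1)_j$.
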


Next, we address the computation for $\GL_n$ and $\SL_n$. The fundamental difference between \MW motivic cohomology and its more classical version is that the former is not $\GL$-oriented (i.e. doesn't have a reasonable theory of Chern classes), thus making the computations more delicate. For this article, the $\eta$-inverted Milnor-Witt cohomology $\Heta$ provides the most effective mean of computations. This allows us to parallel the classical computations for both $O_n$ and $SO_n$ \cite{hatcher2002algebraic}. Using the Leray spectral sequence, we can inductively compute for all Stiefel varieties. In the next statement, we write $V_{k}(\af^{n})$ for the Stiefel variety of full rank $k\times n$ matrices, where $k\leq n$ and we set
\begin{eqnarray*}
\mathbf{HS}_{2k}&:=&\Meta(\afnz{2k})\cong \Meta(S)\oplus \Meta(S)(2k)[4k-1], \\
 \mathbf{HS}_{2k+1}&:=& \Meta(S)\oplus \Meta(S)(4k+1)[8k].
\end{eqnarray*}
Here, let us note that $\Meta(S)(1)[1]$ is canonically isomorphic to $\Meta(S)$, via multiplication by $\eta$, and we could therefore have set
\begin{eqnarray*}
\mathbf{HS}_{2k}\cong\Meta (S)\oplus \Meta(S)[2k-1], \\
 \mathbf{HS}_{2k+1}\cong \Meta(S)\oplus \Meta(S)[4k-1].
\end{eqnarray*}
We however prefer to keep the previous notation, mainly since we plan to compute the integral motive of Stiefel varieties in $\DMt(S)$ in the near future.

\begin{theorem}[Theorem \ref{SVCohMain}]
	We have an isomorphism of graded algebras
	\[\Heta^{*}(V_{k}(\af^{n}))\cong \Heta^{*}(S)[G]/(g^2-\delta_{\gamma_0}(g)[-1]g , g\in G)\] 
	where the generators $G$ are given by 
	\begin{equation*}
		G=\begin{cases}
			G_1\sqcup G_2\sqcup G_3 &\text{$n$ even, $n-k$ even}\\
			G_2\sqcup G_3 &\text{$n$ odd, $n-k$ even}\\
			G_1\sqcup G_2 &\text{$n$ even, $n-k$ odd}\\
			G_2 &\text{$n$ odd, $n-k$ odd}\\
			\end{cases},
	\end{equation*}
	here 
	\begin{eqnarray*}
		G_1&=&\{\alpha_{n-1}\in \Heta^{n-1}(\mathbf{HS}_n)\}\\
		G_2&=&\{\beta_{4j-1}\in \Heta^{4j-1}(\mathbf{HS}_{2j+1})|n-k<2j<n\}\\
		G_3&=&\{\gamma_{n-k} \in \Heta^{n-k}(\afnz{n-k+1})\}
	\end{eqnarray*}
	And $\delta_{\gamma_0}(\gamma_0)=1$, otherwise $ \delta_{\gamma_0}(g)=0$.
\end{theorem}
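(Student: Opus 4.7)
The plan is to imitate Hatcher's classical computation of $H^{*}(V_k(\mathbb{R}^n);\mathbb{Z}/2)$ in the $\mathbb{A}^{1}$-homotopy setting, with $\eta$-inverted \MW coefficients. I would proceed by induction on $k$, using the natural projection $p:V_k(\af^n) \to V_1(\af^n) = \afnz{n}$ sending a $k$-frame to its first vector. This is a Zariski-locally trivial fibration whose fiber is $\mathbb{A}^{1}$-equivalent to $V_{k-1}(\af^{n-1})$ (after choosing a complement of the first vector). The base case $k=1$ is immediate from the cohomology of $\afnz{n}$, whose $\eta$-inverted motive is $\mathbf{HS}_n$, yielding the generator $\alpha_{n-1}$ when $n$ is even and the lone $\gamma$-type generator when $n$ is odd, both matching the statement in this degenerate case.

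For the inductive step I would apply the $\mathbb{A}^{1}$-homotopy Leray spectral sequence. After checking that the local system attached to $p$ is trivial (using $\mathbb{A}^{1}$-connectedness of $\GL_n$ acting transitively on $\afnz{n}$), the $E_2$-page reads
\[
E_2^{p,q}=\Heta^{p}(\afnz{n})\otimes_{\Heta^{*}(S)}\Heta^{q}(V_{k-1}(\af^{n-1})) \;\Longrightarrow\; \Heta^{p+q}(V_k(\af^n)),
\]
and the right factor is known by induction. The key point is showing degeneration at $E_2$: each claimed generator must lift to a permanent cocycle. Generators of type $G_2$ or $G_3$ inherited from the fiber lift via the geometric section provided by the Stiefel structure, while the ``new'' generator coming from the base $\afnz{n}$ pulls back along $p^{*}$ and is automatically permanent. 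The only potentially nontrivial differential would send the top class of the fiber to a monomial on the base, and I would rule this out by a bidegree count that uses the $\eta$-inverted identification $\Meta(S)(1)[1]\cong \Meta(S)$. This bookkeeping is precisely what splits the statement into the four parity cases $(n\bmod 2,(n-k)\bmod 2)$, each corresponding to a different fiber class surviving as the new $G_1$, $G_2$, or $G_3$ generator of $V_k(\af^n)$.

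For the multiplicative structure, all generators except possibly $\gamma_0$ have odd total degree, and graded commutativity in MW-cohomology then forces $(1+\aBra{-1})g^{2}=0$; combined with the identity $\eta\,[-1]=\aBra{-1}-1$ in $\KMW_{*}$, inversion of $\eta$ yields $g^{2}=0$ for such $g$. The exception is $\gamma_0 \in \Heta^{0}(\Gm)$, which is essentially the symbol $[t]$ for the coordinate on $\Gm$; the Steinberg-type identity $[t][t]=[t][-1]$ in Milnor-Witt K-theory directly produces the relation $\gamma_0^{2}=[-1]\gamma_0$, which is exactly the $\delta_{\gamma_0}$-correction in the statement. The hardest step, I expect, will be the $E_2$-degeneration together with the correct identification of the surviving fiber class in each parity case: the $\Heta$-analogue of the Serre spectral sequence is more delicate than the classical $\mathbb{Z}/2$-coefficient version, and candidate differentials can in principle involve $\eta$-torsion phenomena that only disappear after inverting $\eta$. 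Once degeneration is secured, the remaining algebra is formal.
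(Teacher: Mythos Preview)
Your overall plan mirrors the paper's, but there are three genuine gaps.

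\textbf{Your spectral sequence does not degenerate.} You fiber $p:V_k(\af^n)\to\afnz{n}$ with fiber $V_{k-1}(\af^{n-1})$. When $n$ is odd this Leray spectral sequence has a nonzero transgression even after inverting~$\eta$. Already for $k=2$, $n=3$ the target $\Heta^*(V_2(\af^3))\cong\Heta^*(\mathbf{HS}_3)$ is free of rank~$2$ over $W(K)$, whereas your $E_2$-page $\Heta^*(\afnz{3})\otimes\Heta^*(\afnz{2})$ has rank~$4$; the differential $d_2$ carries the fiber generator isomorphically onto the base generator and no bidegree count can rule it out. The paper fibers the \emph{other} way: it uses $\afnz{n-k+1}\to V_k(\af^n)\to V_{k-1}(\af^n)$ when $n-k$ is even and the two-step version $V_2(\af^{n-k+2})\to V_k(\af^n)\to V_{k-2}(\af^n)$ when $n-k$ is odd. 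With a fiber whose reduced cohomology sits in a single positive degree, the $E_2$-page has only two nonzero rows and the sole possible differential lands in a group that the inductive description of $\Heta^*(V_{k-1}(\af^n))$ forces to be zero (Proposition~\ref{SSconverge}). Your fibration \emph{is} used in the paper, but for the cofiber-sequence analysis of Sections~\ref{secV2}--\ref{SVCof}, not for the spectral sequence.

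\textbf{The local-system argument is not valid.} $\GL_n$ is not $\af^1$-connected: its $\pi_0^{\af^1}$ is $\Gm$ via the determinant, so transitivity of the action does not force the coefficient system to be simple. The paper proves simplicity by an explicit \v{C}ech computation (Section~\ref{trivialMono} and the one preceding it): on the standard cover $\{U_I\}$ of $V_l(\af^n)$ the transition automorphism is $\af^1$-homotopic to right multiplication by $\mrm{diag}\big((-1)^{\sum k-\sum i}\det(\mathbf{A}_K)^{-1}\det(\mathbf{A}_I),1,\ldots,1\big)$, which is visibly a coboundary.

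\textbf{The square-zero argument is vacuous after inverting $\eta$.} In $\KMW_*$ one has $1+\aBra{-1}=h$ and $\eta h=0$; hence $h=0$ in the $\eta$-inverted theory and your relation $(1+\aBra{-1})g^2=0$ reads $0=0$. Graded commutativity alone gives only $2g^2=0$, which is not enough over fields with $2$-torsion in $W(K)$. The paper's argument (Section~\ref{ring}) is a degree count: the square of the newly introduced generator has even degree and is too small to be a product of two or more generators, so it is a $\Heta^*(K)$-linear combination of single generators; those all sit in odd degree, and since $\Heta^p(K)=0$ for $p\neq 0$ every coefficient vanishes (with the expected exception $\gamma_0^2=[-1]\gamma_0$ coming from $\Gm$).
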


As a consequence, we obtain the following theorem.
\begin{theorem}[Theorem \ref{SVMain}]
We have a motivic decomposition of the following form for any $i,j\in\mathbb{N}$:
	\[\Meta(V_{2j}(\af^{2i})) \cong \mathbf{HS}_{2i} \otimes  \mathbf{HS}_{2i-1}\otimes \mathbf{HS}_{2i-3}\otimes \ldots \otimes \mathbf{HS}_{2i-1-2(j-2)} \otimes \Meta(\afnz{2i+1-2j}) \]
	\[\Meta(V_{2j+1}(\af^{2i})) \cong \mathbf{HS}_{2i} \otimes \mathbf{HS}_{2i-1}\otimes \mathbf{HS}_{2i-3}\otimes \ldots \otimes \mathbf{HS}_{2i-1-2(j-1)}  \]
	\[\Meta(V_{2j}(\af^{2i+1})) \cong \mathbf{HS}_{2i+1}\otimes \mathbf{HS}_{2i-1}\otimes \ldots \otimes \mathbf{HS}_{2i+1-2(j-1)}  \]
	\[\Meta(V_{2j+1}(\af^{2i+1})) \cong \mathbf{HS}_{2i+1}\otimes \mathbf{HS}_{2i-1}\otimes \ldots \otimes \mathbf{HS}_{2i+1-2(j-1)} \otimes \Meta(\afnz{2i+1-2j}).  \]
\end{theorem}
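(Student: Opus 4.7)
The plan is to deduce the motivic decompositions of Theorem \ref{SVMain} from the cohomology ring computation of Theorem \ref{SVCohMain} by constructing explicit morphisms in $\DMt_\eta(S)$ that realize each generator. I would proceed by induction on $k$, reusing the fiber sequence $V_{k-1}(\af^{n-1}) \to V_k(\af^n) \to \afnz{n}$ that already underlies Theorem \ref{SVCohMain}, where the projection sends a $k$-frame to its last vector. The base case $k=1$ is immediate since $V_1(\af^n)=\afnz{n}$ and the right-hand side of each of the four formulas collapses to either $\mathbf{HS}_n$ or $\Meta(\afnz{n})$ according to the parity of $n$, matching the definitions.

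For the inductive step, I would first produce, for each factor in the predicted tensor product, an explicit morphism from $\Meta(V_k(\af^n))$ to that factor. The factors of the form $\mathbf{HS}_{2i}$ and $\Meta(\afnz{n-k+1})$ are motives of smooth varieties and arise from natural Stiefel-to-Stiefel forgetful projections combined with the tautological identification $\mathbf{HS}_{2i}=\Meta(\afnz{2i})$. The odd factors $\mathbf{HS}_{2j+1}$ must be realized using the transgression class $\beta_{4j-1}$ produced in the proof of Theorem \ref{SVCohMain}. These maps are then assembled through the diagonal of $V_k(\af^n)$ into a single morphism
\[ \Meta(V_k(\af^n)) \;\longrightarrow\; \mathbf{HS}_{2i}\otimes\mathbf{HS}_{2i-1}\otimes\cdots\otimes \Meta(\afnz{*}). \]
Since both sides are strongly dualizable (as iterated tensor products of motives of smooth varieties with trivial tangent data after inverting $\eta$), being an isomorphism in $\DMt_\eta(S)$ can be detected on $\Heta^{*}$. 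Theorem \ref{SVCohMain} then closes the argument: the induced map on cohomology identifies the polynomial-exterior generators of the target with those of the source, since the $\Heta^{*}$-algebra of each tensor factor is generated, essentially freely, by the class pulled back to build the morphism.

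The main obstacle is the construction of the morphisms realizing the $\mathbf{HS}_{2j+1}$ factors. Unlike $\mathbf{HS}_{2i}$, the object $\mathbf{HS}_{2j+1}=\Meta(S)\oplus\Meta(S)(4j+1)[8j]$ is not visibly the MW-motive of a variety, so the relevant morphism is not induced by a geometric map. I expect the correct way to produce it is through the connecting morphism of the Gysin / Leray triangle attached to the intermediate fibration used in Theorem \ref{SVCohMain}: the class $\beta_{4j-1}$ transgresses from a $\gamma$-class on the fiber, and this transgression lifts to a morphism in $\DMt_\eta(S)$ once one uses the $\eta$-periodicity $\Meta(S)(1)[1]\cong\Meta(S)$ to absorb the extra Tate twist, exactly as suggested by the remark following the definition of $\mathbf{HS}_*$. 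Once this lifting is secured, the remainder of the proof is cohomological bookkeeping organized by the four parity cases of Theorem \ref{SVCohMain}.
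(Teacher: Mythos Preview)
Your proposal hinges on the claim that a morphism in $\DMt(S)[\eta^{-1}]$ can be shown to be an isomorphism by checking it on $\Heta^*$, justified by strong dualizability. This is the gap. Strong dualizability of $\Meta(V_k(\af^n))$ is not obvious: Stiefel varieties are open in affine space and so have trivial tangent bundle, but open varieties do not in general have dualizable suspension spectra in $\SH$, and your parenthetical about ``trivial tangent data'' does not bridge that. Even granting dualizability over a field, the implication ``$f^*$ is an isomorphism on $\Heta^*$, hence $f$ is an isomorphism'' requires the conservativity of homotopy groups in the $\Heta$-module category together with the identification of the cohomology of $M$ with the homotopy of $M^\vee$; none of this follows from dualizability alone, and over a general base $S$ (where $\Heta^*(S)$ need not be concentrated in degree zero) it is more delicate still. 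Without this, knowing that your global map matches generators on $\Heta^*$ does not let you conclude anything about $\Meta(V_k(\af^n))$ itself.

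The paper takes a different route that sidesteps this issue entirely. It does not build one global comparison map. Instead, Section~\ref{SVCof} constructs explicit cofiber sequences in $\Da(K)[\eta^{-1}]$: for $n$ even the sequence $V_{k-1}(\af^{n-1})_+\to V_k(\af^n)_+\to \afnz{n}\wedge V_{k-1}(\af^{n-1})_+$ is used directly, while for $n$ odd one must first pass to $V_{k-2}(\af^{n-2})_+\to V_k(\af^n)_+\to \afnz{n}\wedge\afnz{n-1}\wedge V_{k-2}(\af^{n-2})_+$, and this requires identifying a cofiber via an explicit matrix computation showing that a certain composite $\tau\circ\Sigma d$ equals $\eta\wedge 1$ and hence becomes invertible after inverting $\eta$. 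These sequences are then split using $\mrm{PH}(k,n)$ exactly as in the proof of Theorem~\ref{SpMain}: by induction the left-hand term is already a finite direct sum of shifts of the unit, and the cohomology classes supplied by Theorem~\ref{SVCohMain} lift each projector to give a retraction. No dualizability of $\Meta(V_k(\af^n))$ and no detection principle is invoked---only that the smaller Stiefel motive is already decomposed.
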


In the first part of our study, we employ the $\Sp$-orientation and Borel classes as described in \cite{panin2022quaternionic}. In the second part, we make use of the homotopy Leray spectral sequence \cite{asok2018homotopy}, which is a significant tool in our computations. It is worth noting that our results represent the first non-trivial application of this spectral sequence in computations. Next, we establish the multiplicative structure with the aid of elementary arguments on the degree of the generators.

When considering rational coefficients, we can draw on the result of reconstructing rational motives presented in \cite{garkusha2019reconstructing}: 

$$\SH(K)_{\mathbb{Q}}\cong\DMt(K)_{\mathbb{Q}}\cong \DM(K)_{\mathbb{Q}} \times \DMt(K)[\eta^{-1}]_{\mathbb{Q}}$$ 

Consequently, we are able to fully comprehend a rational motive in $ \DMt(S)_{\mathbb{Q}}$ by either setting $\eta=0$ or making $\eta$ invertible. In our case, when combined with the results on motivic cohomology of Stiefel varieties in \cite{williams2012motivic} (i.e., the $\eta=0$ case), we can understand the rational motive $\Mt(V_k(\af^n))_{\mathbb{Q}}\in \DMt(S)_{\mathbb{Q}})$.

However, for integral cases, it remains generally unknown how to recover $\Mt(X)$ from $\mathrm{M}(X)$ and $\Meta(X)$. Some progress has been made when $X$ is Tate \cite{fasel2023tate}.

As a continuation of this work, it would be possible to study the integral MW-motivic cohomology of Stiefel varieties. We conjecture that $\Mt(V_k(\af^n))$ is the direct sum of $\Mt(S)$ and the cone of $\eta$ with certain shifts. This hypothesis leads us to the following conjecture:

\begin{conjecture}
	\label{IntegralConj}
Let $C_\eta(S)$ be the cone of $\Mt(S) \xr{\eta}  \Mt(S)(1)[1]$ and
\begin{eqnarray*}
\wt{\mathbf{HS}}_{2k}&:=& \Mt(\afnz{2k})\cong \Mt (S)\oplus \Mt(S)(2k)[4k-1], \\
 \wt{\mathbf{HS}}_{2k+1}&:=& \Mt(S)\oplus C_\eta(S)(2k)[4k] \oplus \Mt(S)(4k+1)[8k] .
\end{eqnarray*}
Then we have a motivic decomposition of the following form for any $i,j\in\mathbb{N}$:
	\[\Mt(V_{2j}(\af^{2i})) \cong \wt{\mathbf{HS}}_{2i} \otimes  \wt{\mathbf{HS}}_{2i-1}\otimes \wt{\mathbf{HS}}_{2i-3}\otimes \ldots \otimes \wt{\mathbf{HS}}_{2i-1-2(j-2)} \otimes \Mt(\afnz{2i+1-2j}) \]
	\[\Mt(V_{2j+1}(\af^{2i})) \cong \wt{\mathbf{HS}}_{2i} \otimes \wt{\mathbf{HS}}_{2i-1}\otimes \wt{\mathbf{HS}}_{2i-3}\otimes \ldots \otimes \wt{\mathbf{HS}}_{2i-1-2(j-1)}  \]
	\[\Mt(V_{2j}(\af^{2i+1})) \cong \wt{\mathbf{HS}}_{2i+1}\otimes \wt{\mathbf{HS}}_{2i-1}\otimes \ldots \otimes \wt{\mathbf{HS}}_{2i+1-2(j-1)}  \]
	\[\Mt(V_{2j+1}(\af^{2i+1})) \cong \wt{\mathbf{HS}}_{2i+1}\otimes \wt{\mathbf{HS}}_{2i-1}\otimes \ldots \otimes \wt{\mathbf{HS}}_{2i+1-2(j-1)} \otimes \Mt(\afnz{2i+1-2j}).  \]
\end{conjecture}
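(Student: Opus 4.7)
The plan is induction on the number of rows $k$, using the natural Stiefel fibration
\[ V_{k-1}(\af^{n-1}) \to V_k(\af^n) \to \afnz{n} \]
obtained by projecting a $k$-frame onto its first vector; the fiber is $\af^1$-homotopy equivalent to $V_{k-1}(\af^{n-1})$ by translating away the chosen vector. The base case $V_1(\af^n) = \afnz{n}$ matches the conjecture directly. For the inductive step I would work with the associated cofiber sequence in $\DMt(S)$, or equivalently the homotopy Leray spectral sequence of \cite{asok2018homotopy} applied now integrally rather than $\eta$-inverted as in Theorem \ref{SVMain}.

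The inductive hypothesis gives a decomposition of $\Mt(V_{k-1}(\af^{n-1}))$ into the predicted $\wt{\mathbf{HS}}$-factors, while $\Mt(\afnz{n}) = \Mt(S) \oplus \Mt(S)(n)[2n-1]$ has only two cells, so the $E_2$-page of the spectral sequence reduces to two shifted copies of the fiber cohomology. For bidegree reasons almost all differentials vanish, leaving essentially one transgression $d_{2n-1}$ coming off the top cell of $\afnz{n}$; determining this differential is equivalent to identifying the attaching map in the corresponding cofiber sequence.

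The crucial observation is that the behavior of this transgression depends on the parity of $n$. When $n=2k$ is even, the top cell of $\afnz{n}$ is purely Tate and detects only ``$\eta$-free'' fiber classes; combined with the explicit $\Sp$-oriented Borel classes on the even-indexed pieces, one expects the transgression to vanish on generators, producing a clean tensor factor $\wt{\mathbf{HS}}_{2k} = \Mt(\afnz{2k})$. When $n$ is odd, the top cell $\Mt(S)(n)[2n-1]$ differs from a Tate class by one unit of $\eta$, and the transgression becomes (after the appropriate shift) essentially an $\eta$-multiplication against a fiber Euler-type class. The cofiber of such a multiplication is precisely what $C_\eta(S)$ encodes, which is why the hybrid block $\wt{\mathbf{HS}}_{2k+1} = \Mt(S) \oplus C_\eta(S)(2k)[4k] \oplus \Mt(S)(4k+1)[8k]$ should appear in place of a naive tensor product.

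The main obstacle will be pinning down this transgression integrally. After inverting $\eta$ one recovers Theorem \ref{SVMain}, and after setting $\eta = 0$ one should recover Williams' classical decomposition \cite{williams2012motivic}, but integrally one has to glue these two pictures together with the correct $\eta$-attaching data. In practice this requires constructing explicit integral MW-motivic classes lifting the $\eta$-inverted generators $\alpha_{n-1}$, $\beta_{4j-1}$, $\gamma_{n-k}$ from Theorem \ref{SVCohMain}, and verifying, possibly via quaternionic Borel classes in the sense of \cite{panin2022quaternionic} for the pieces coming from $\Sp$-orientable bundles, that their products in $\DMt(S)$ fit into the defining triangle of $C_\eta(S)$. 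One useful consistency check is to verify that the proposed decomposition restricts correctly on both sides of the rational recollement $\DMt(S)_\mathbb{Q} \simeq \DM(S)_\mathbb{Q}\times \DMt(S)[\eta^{-1}]_\mathbb{Q}$ of \cite{garkusha2019reconstructing}; integrally, however, this comparison is not enough on its own, which is exactly why the statement is left as a conjecture.
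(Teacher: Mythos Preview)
The statement you are addressing is a \emph{conjecture} in the paper, not a theorem; the paper does not give a proof. The only case the paper actually establishes is $k=2$ (the corollary immediately following the discussion of $V_2(\af^n)$ in Section~\ref{secV2}), where the explicit analysis of the attaching map $\beta$ shows that it is $n_\epsilon\eta$, yielding $\Mt(V_2(\af^n))\cong \Mt(\afnz{n})\otimes\Mt(\afnz{n-1})$ for $n$ even and $\Mt(V_2(\af^n))\cong\wt{\mathbf{HS}}_n$ for $n$ odd. Beyond $k=2$ the paper makes no claim of a proof and explicitly flags the integral case as future work.

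Your outline is a reasonable heuristic for why the conjecture is plausible, and you correctly identify the crux: controlling the integral transgression/attaching map, which after $\eta$-inversion reduces to the paper's Theorem~\ref{SVMain} and after $\eta=0$ to Williams' result. But this is a strategy sketch, not a proof, and you say as much in your final sentence. The genuine gap is exactly the one you name: for $k\geq 3$ one must show that the connecting map $\beta$ in the cofiber sequence of Section~\ref{SVCof} decomposes integrally in a way that produces the $C_\eta$ summands, and the paper's techniques (which rely on $\eta$ being invertible to split $\tau$ via $\tau\circ\Sigma d=\eta\wedge 1$) do not obviously extend. There is no ``paper's proof'' to compare against here; your proposal is an informed restatement of why the problem is open.
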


However, the multiplicative structures of cohomology groups are quite intricate and depend on the results of Bockstein homomorphisms.

\subsection*{Acknowledgement}

This work constitutes the major part of my PhD thesis. I would like to express my deepest gratitude to my advisor, Jean Fasel, for his invaluable guidance and support throughout my PhD journey. I am also grateful to Institut Fourier for hosting my PhD research. Finally, I extend my thanks to Frédéric Déglise for the helpful discussions and assistance.

\subsection*{Notations and conventions}

The letter $K$ will always denote a perfect field of characteristic different from $2$, while $S$ will denote a fixed Noetherian base scheme over $K$. Further, we will denote by $\mathrm{Sm}/S$ the category of smooth schemes of finite type over $ S $. The isomorphisms $\cong$ and commutative diagrams utilized in this article are typically understood to be up to homotopy or weak equivalence, unless otherwise specified.

\paragraph*{$\af^1$-homotopy and stable $\af^1$-homotopy categories.}

We define the $\af^1$-homotopy category, denoted by $\mathcal{H}(S)$, as 
\[ \mathcal{H}(S):=\mathrm{L}_{Nis,\af^1}PShv(\mathrm{Sm}/S)\]
Here, $ PShv(\mathrm{Sm}/S) $ denotes the category of presheaves of simplicial sets on $\mathrm{Sm}/S$, and $\mathrm{L}_{Nis,\af^1} $ is the localization functor with respect to the Nisnevich topology and $\mathbb{A}^1$-invariance. 

The $\af^1$-stable homotopy category, denoted by $\SH(S)$, is obtained by stabilizing the $\af^1$-homotopy category with respect to $\Omega_{\mathbb{P}^1} $:
\[\Omega_{\mathbb{P}^1}^{\infty}: \SH(S)\rightleftarrows \mathcal{H}_*(S): \Sigma_{\mathbb{P}^1}^{\infty} \]

Over the base field $K$, there is $\af^1$-derived category, denoted by $\Da(K)$ \cite[\S 5]{morel2012a1}, and we have another adjunction:

\[C^{\af^1}_*: \mathcal{H}(K)\rightleftarrows \Da(K): K^{\af^1} \]

\paragraph*{Cohomology theories.}

Given a spectrum $E \in \SH(S)$, we can define a bigraded cohomology theory for smooth $S$-schemes, as follows:
\[E^{p,q}(X):=[\Sigma_{\mathbb{P}^1}^{\infty}X_+, \Sigma^{p,q}E]_{\SH(S)} \]
where $\Sigma^{p,q}E:= S^{p,q}\wedge E= S^{p-q} \wedge \Gm^{q}\wedge E$.

In our analysis, we focus on ring spectra, i.e. spectra endowed with a multiplication $m: E\wedge E \to E$ satisfying the usual axioms. This multiplication induces a cup product $ \cup: E^{*,*}(X)\otimes E^{*,*}(X) \to E^{*,*}(X)$. It is worth noting that $E^{*,*}(X)$ is a bigraded $(-1,\aBra{-1})$-commutative ring, where $\aBra{-1}=1+\eta [-1] \in \KMW_0$. That is, for any $a\in E^{p,q}(X)$ and $b\in E^{s,t}(X)$, we have $a\cup b= (-1)^{p+s}\aBra{-1}^{q+t}b\cup a$. All the cohomology rings we compute here are bigraded $(-1,\aBra{-1})$-commutative rings.

\paragraph*{\MW motives.}
The category of \MW motives $\DMt(S)$ is equipped with a pair of adjoint functors
$$\wt{\gamma}_*: \SH(S) \rightleftarrows \DMt(S): \wt{\gamma}^*.$$
And over $K$ we have a similar pair of adjoint functors with $\Da(K)$ \cite[\S 1.3 (\textbf{MW4})]{bachmann2020milnor}.

As written above, the \MW motivic spectrum $\HH$ can be defined as $\HH:= \wt{\gamma}^*(\tbZ) $, where $ \tbZ\in \DMt(S)$ is the unit object. Let $  \tbZ(q)[p]:= \wt{\gamma}_*(S^{p,q})$ denote the Tate twists and shifts and let $ \Mt(X):=\wt{\gamma}_*(\Sigma_{\mathbb{P}^1}^{\infty}X_+)$ denote the \MW motive. The \MW motivic cohomology is given by 
\[
	\HH^{p,q}(X):=[\Sigma_{\mathbb{P}^1}^{\infty}X_+, \Sigma^{p,q}\HH]_{\SH(S)}\cong\rHom_{\DMt(S)}(\Mt(X), \tbZ(q)[p])
\]
One important fact is that, for $q<0$ 
\[\HH^{p,q}(X)\cong \mathrm{H}^{p-q}(X,\mathbf{W})\]
where $\mathbf{W}$ represents the Witt sheaf.

In a tensor category $\mathcal{C}$, we have the flexibility of omitting the tensor unit when expressing tensor products, denoted as $ A\otimes B:= A \otimes_{\mathbbm{1}} B$. In the context of $\DMt(S)$, the tensor unit takes the form of $\Mt(S)$. Hence, we can simply write $A\otimes B $ for $A \otimes_{\Mt(S)} B$.

\paragraph*{$\eta$-inverted cohomology.}
Consider the map $\eta: S^{3,2} \cong \Sigma_{\mathbb{P}^1}^{\infty}\afnz{2} \to \Sigma_{\mathbb{P}^1}^{\infty}\mathbb{P}^1 \cong S^{2,1}$, known as the $\af^1$ Hopf map. It is clear that $\eta$ induces a morphism $\eta: E^{p,q}(X) \to E^{p-1,q-1}(X)$. By localizing with respect to $\eta$, we obtain the $\eta$-inverted cohomology theory $E[\eta^{-1}]$, which has the property that the multiplication by $\eta: E[\eta^{-1}]^{p,q}(X) \xrightarrow{\sim} E[\eta^{-1}]^{p-1,q-1}(X)$ an isomorphism.

In the case of $\HH$, since $X \mapsto \HH^{p,q}(X)[\eta^{-1}] \cong \mathrm{H}^{p-q}(X,\mathbf{W})$ for $q<0$, we see that $\Heta:=\HH[\eta^{-1}]$ represents the cohomology in the Witt sheaf, in the sense that
\[
[\Sigma_{\mathbb{P}^1}^{\infty}X_+, \Sigma^{p,q}\Heta]_{\SH(S)}=\mathrm{H}^{p-q}(X,\mathbf{W}).
\]

More generally, for an $\eta$-inverted spectrum $E_{\eta}$, we can obtain single graded cohomology groups $E_{\eta}^*(X) := \oplus_{n\in \mathbb{Z}}E_{\eta}^{n+*,n}(X) \cong E_{\eta}^{*,0}(X)[\eta,\eta^{-1}]$.

\section{Computations on Symplectic Groups}
\label{ComputeSp}

In this section, our focus is on the additive structure of the cohomology groups of general $\Sp$-oriented theories. It is important to note that all the isomorphisms discussed in this section are isomorphisms of modules. In section \ref{ring}, we will study the multiplicative structure, specifically for $\HH$.

\subsection{Recollections on oriented cohomology theories.}

In our context, a $G$-orientation of a cohomology theory (i.e. a spectrum) $E$ is simply a morphism $\tau: MG \to E$, where $MG$ is the Thom spectrum associated with the classifying space $BG$ of the group $G$. If such an orientation exists, we say that $E$ is $G$-oriented. One of the main cases of interest is the $\Sp$-oriented theory \cite{panin2023algebraic}.

The canonical morphism 
$$B\Sp_{2n} \to M\Sp_{2n} \to \Sigma^{4n,2n} M\Sp$$ 
gives rise to a universal Euler class $\theta \in M\Sp^{4n,2n}(B\Sp_{2n})$, which can also be defined for $\Sp$-oriented theories as $\theta \in E^{4n,2n}(B\Sp_{2n})$ through the orientation.

In fact, for $\Sp$-oriented theories, we have the splitting principle, which allows us to define Borel classes (e.g. \cite{panin2022quaternionic})
\[
b_j \in E^{4j,2j}(\mathrm{BSp}_{2n})
\] 
for $1\leq j \leq n$, satisfying $E^{*,*}(B\Sp_{2n}) \cong E^{*,*}(K)[[b_1, \ldots, b_n]]$ with $b_n = \theta$.

In particular, the \MW motivic cohomology $\HH$ is $\SL$-oriented and $\Sp$-oriented but not $\GL$-oriented.

\subsection{Leray-Hirsch theorem}
Let $E\in\mathcal{SH}(S)$ be a ring spectrum, and let $G$ be a smooth scheme over $S$. We aim to compute the cohomology ring $E^{*,*}(G)=\oplus E^{p,q}(G)$ as an $E^{*,*}(S)$-algebra. 
However, determining the multiplicative structure for a general $E$ is often difficult. Our focus here is on additive structure.

\begin{example}\label{caseAn}
	Consider $G=\mathbb{A}^n_S \nonZero$. Since $E^{p,q}((\mathbb{A}^n_S \nonZero))\cong E^{p,q}(S)\oplus E^{p-(2n-1),q-n}(S)$, we obtain  
	$$E^{*,*}(\mathbb{A}^n_S \nonZero)\cong E^{*,*}(S)\cdot 1\oplus E^{*,*}(S) \cdot \theta_n \cong E^{*,*}(S) [\theta_n]/(\theta_n^2) $$ as $ E^{*,*}(S)$-module, 
	 where  $\theta_n\in E^{2n-1,n}((\mathbb{A}^n_S \nonZero))$ is the canonical element induced by the unit of the spectrum.
\end{example}

To compute the cohomology ring $E^{*,*}(G_n)$ for algebraic groups, we define a family $\{G_n\}, n\in \mathbb{N}$ of $S$-schemes to be a \textbf{ $k$-fiber family} over $S$ for some $k\in\mathbb{N^*}$ if there exists for each $G_n$ a Zariski locally trivial fibration
\[ 
G_{n-1} \hookrightarrow G_n \xr{f_n} \af^{kn}_S \nonZero,
\] 
with $G_0 = S$ and a convenient choice of base point for $\af^{kn}_S\nonZero$. For any morphism $T\to S$, the base-changed family $G_{n,T} = G_n \times_{S} T $ is also a $k$-fiber family over $T$.
\begin{example}
  $\{\GL_{n}\times S\vert n\in\mathbb{N}\}$ is a $1$-fiber family over $S$, and $\{\Sp_{2n}\times S\vert n\in\mathbb{N}\}$ is a $2$-fiber family over $S$ \cite[\S 2]{suslin1991k}.
\end{example}

We can now state our theorem (proved below) for computing the cohomology ring $E^{*,*}(G_n)$ of a $k$-fiber family, which is a version of Leray-Hirsch theorem based on Suslin's result \cite{suslin1991k}.

\begin{theorem}\label{susMain}
	Consider a $k$-fiber family over $S$ and a ring spectrum $E\in\mathcal{SH}(S)$. Assume that for all $n$, $ G_n $ has series of classes $ c(n)_j \in E^{2kj-1,kj}(G_n)$, where $ 1\leq j\leq n $, such that:
	\begin{enumerate}
		\item $c(n)_n = u f_n^*(\theta_{kn})$, where $u\in E^{0,0}(S)$ is a unit;
		\item for any fiber embedding $ i: G_{n-1} \hookrightarrow G_n $, the generators $ (i^*(c(n)_1),\ldots,i^*(c(n)_{n-1})) $ are isomorphic to $(c(n-1)_1,\ldots,c(n-1)_{n-1})$ up to an isomorphism in $ GL_{n-1}(\mathbb{Z}) $.
	\end{enumerate}
	Then the total cohomology group $E^{*,*}(G_n)$ isomorphic to a freely generated square-zero algebra as a module.
	\[ E^{*,*}(G_n)\cong E^{*,*}(S) [c(n)_i]/(c(n)_i^2, 1\leq i\leq n) \] 
	and hence, it is free as $ E^{*,*}(S) $-module with basis $ c(n)_{i_1}\cup\ldots\cup c(n)_{i_j}, 1\leq i_1<\ldots<i_j \leq n $ and $0\leq j \leq n$.
\end{theorem}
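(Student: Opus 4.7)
The proof proceeds by induction on $n$, the base case $n=0$ being trivial since $G_0=S$ and the claimed algebra reduces to $E^{*,*}(S)\cdot 1$. For the inductive step, the strategy is to apply a Leray-Hirsch-type theorem to the Zariski-locally trivial fibration $G_{n-1} \hookrightarrow G_n \xr{f_n} \afnz{kn}$ coming from the $k$-fiber family structure, reducing the computation of $E^{*,*}(G_n)$ to that of $E^{*,*}(G_{n-1})$ and $E^{*,*}(\afnz{kn})$, both of which are known (the latter by Example \ref{caseAn}).

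Concretely, I would construct the natural $E^{*,*}(S)$-linear map
\[
\phi : E^{*,*}(G_{n-1}) \otimes_{E^{*,*}(S)} E^{*,*}(\afnz{kn}) \longrightarrow E^{*,*}(G_n),\qquad \alpha \otimes \beta \mapsto \widetilde{\alpha} \cup f_n^*(\beta),
\]
where $\widetilde{\alpha}$ is a chosen lift of $\alpha$ along the fiber inclusion $i^*$. Hypothesis (2) supplies canonical such lifts: by the inductive hypothesis, the basis of $E^{*,*}(G_{n-1})$ consists of cup products of the $c(n-1)_j$, and these are hit, up to an element of $GL_{n-1}(\bZ)$, by the globally-defined classes $c(n)_j$ for $j<n$. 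Hypothesis (1) identifies $f_n^*(\theta_{kn})$ with $u^{-1} c(n)_n$, so the image of $\phi$ is exactly the $E^{*,*}(S)$-span of the claimed basis $\{c(n)_{i_1}\cup\cdots\cup c(n)_{i_j}\}_{1 \le i_1 < \cdots < i_j \le n}$, with $2^n$ elements.

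The crux of the proof is then to show that $\phi$ is an isomorphism. This is precisely a Leray-Hirsch statement for $f_n$: the hypothesis we need is that there exist globally defined classes on $G_n$ restricting to a free $E^{*,*}(S)$-basis of the cohomology of the fiber $G_{n-1}$, which is exactly what hypotheses (1) and (2) combined with the induction give. Two essentially equivalent strategies are natural here: either invoke the homotopy Leray spectral sequence of \cite{asok2018homotopy} for $f_n$ and use the global lifts to force degeneration at $E_2$; or, following Suslin's original K-theoretic argument \cite{suslin1991k}, cover $\afnz{kn}$ by Zariski opens over which $f_n$ trivializes, apply a local Künneth formula (combining Example \ref{caseAn} with the IH applied to base changes $G_{n-1,U}$), and patch via a Mayer-Vietoris diagram chase.

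The main obstacle is this last Leray-Hirsch/patching step, where one must carefully track the bigrading and ensure the Mayer-Vietoris differentials vanish on the subalgebra generated by $f_n^*(\theta_{kn})$ and the restrictions of the $c(n)_j$. Fortunately, since we only assert a module (additive) isomorphism, the $(-1,\aBra{-1})$-commutativity plays no obstructive role and the patching reduces to a formal diagram chase analogous to Suslin's. Once $\phi$ is established as an isomorphism of $E^{*,*}(S)$-modules, the stated basis emerges immediately as the tensor product of $\{1, \theta_{kn}\}$ with the inductive basis on $G_{n-1}$, completing the induction; the multiplicative $c(n)_i^2 = 0$ relations are then not needed at the level of modules but serve only to record that the algebra presentation above is a valid one.
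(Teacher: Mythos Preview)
Your proposal is correct and follows essentially the same route as the paper: induction on $n$, then a Leray--Hirsch argument for the fibration $f_n$ using the global classes $c(n)_j$ as fiberwise bases, carried out via Suslin's Mayer--Vietoris patching over a trivializing cover of $\afnz{kn}$. The paper isolates the patching step as a standalone lemma (free basis on each $f^{-1}(V)$ for $V$ in a cover implies free basis globally, proved by the $5$-lemma on the Mayer--Vietoris sequence) and then applies it, whereas you fold this into the main argument; but the substance is identical.
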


Since the cohomology ring is free as a module, we have the Künneth formula and the following corollary holds.
\begin{corollary}
	The base changed $k$-fiber family $G_{n,T} = G_n \times_{S} T $ and the classes $ p_2^*(c(n)_{i})$ also satisfy the conditions of theorem \ref{susMain}, and we have 
	\[E^{*,*}(G_{n,T})\cong E^{*,*}(T)\otimes_{E^{*,*}(S)} E^{*,*}(G_{n})\]
\end{corollary}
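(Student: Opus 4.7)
The plan is to verify that the base-changed family, equipped with the pulled-back classes, meets the hypotheses of Theorem~\ref{susMain}, and then deduce the Künneth-type isomorphism by comparing two free-module descriptions of the same cohomology. Throughout, let $p: T \to S$ denote the structure map, and let $r: G_{n,T} \to G_n$ and $q: G_{n,T} \to T$ denote the two projections of the fibre product; set $\tilde{c}(n)_i := r^*(c(n)_i) \in E^{2ki-1, ki}(G_{n,T})$ for the pulled-back classes (so that the symbol $p_2^*$ in the statement refers to pullback along the projection to $G_n$).

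First, I would observe that $\{G_{n,T}\}$ is itself a $k$-fiber family over $T$. Indeed, Zariski locally trivial fibrations are preserved by arbitrary base change, so base-changing the fibration $G_{n-1} \hookrightarrow G_n \xrightarrow{f_n} \afnz{kn}_S$ along $p$ produces a Zariski locally trivial fibration $G_{n-1,T} \hookrightarrow G_{n,T} \xrightarrow{f_{n,T}} \afnz{kn}_T$, with base point pulled back from $S$. Next I would check the two conditions of Theorem~\ref{susMain} for the $\tilde{c}(n)_i$. For condition (1), the canonical class $\theta_{kn,T}$ of Example~\ref{caseAn} for the spectrum over $T$ is, by its construction from the unit of the ring spectrum, the pullback of $\theta_{kn}$ along $\afnz{kn}_T \to \afnz{kn}_S$; hence $\tilde{c}(n)_n = r^*(u \cdot f_n^*\theta_{kn}) = p^*(u)\cdot f_{n,T}^*(\theta_{kn,T})$, and $p^*(u) \in E^{0,0}(T)$ remains a unit. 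For condition (2), naturality of $r^*$ with respect to the base-changed fiber inclusion $i_T: G_{n-1,T} \hookrightarrow G_{n,T}$ transports the $GL_{n-1}(\mathbb{Z})$ change-of-basis relation on the original $c(n)_j$ verbatim onto the pulled-back family.

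Having verified the hypotheses, Theorem~\ref{susMain} applies over $T$ and yields that $E^{*,*}(G_{n,T})$ is a free $E^{*,*}(T)$-module on the monomials $\tilde{c}(n)_{i_1} \cup \ldots \cup \tilde{c}(n)_{i_j}$ with $1 \leq i_1 < \ldots < i_j \leq n$. On the other hand, the original Theorem~\ref{susMain} gives that $E^{*,*}(G_n)$ is free over $E^{*,*}(S)$ with basis $c(n)_{i_1} \cup \ldots \cup c(n)_{i_j}$; consequently $E^{*,*}(T) \otimes_{E^{*,*}(S)} E^{*,*}(G_n)$ is free over $E^{*,*}(T)$ on the same index set. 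I would then construct the comparison map
\[
E^{*,*}(T) \otimes_{E^{*,*}(S)} E^{*,*}(G_n) \longrightarrow E^{*,*}(G_{n,T}), \qquad t \otimes c \longmapsto q^*(t) \cup r^*(c),
\]
built from the exterior product associated to the ring structure of $E$. This map is well-defined $E^{*,*}(T)$-linear and sends $1 \otimes c(n)_{i_1} \cup \ldots \cup c(n)_{i_j}$ to $\tilde{c}(n)_{i_1} \cup \ldots \cup \tilde{c}(n)_{i_j}$, identifying the two bases; hence it is an isomorphism.

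There is no serious obstacle here: the corollary is essentially a base-change formality built on the genuine content of Theorem~\ref{susMain}. The only point that deserves a sentence of care is the naturality of $\theta_{kn}$ under base change, which I would record as a direct consequence of its definition as the image of the unit under $\Sigma^\infty_{\mathbb{P}^1}\afnz{kn}_+ \to \Sigma^\infty_{\mathbb{P}^1}\afnz{kn}$.
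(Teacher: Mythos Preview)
Your proof is correct and follows the same approach as the paper, which in fact offers only a one-line justification (``Since the cohomology ring is free as a module, we have the K\"unneth formula and the following corollary holds'') before stating the corollary. Your argument fleshes this out carefully: you verify that the hypotheses of Theorem~\ref{susMain} persist after base change and then deduce the K\"unneth-type isomorphism by matching the free bases, which is exactly the content behind the paper's remark.
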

Giving a morphism of ring spectra $\sigma: E\to F$, if $E$ satisfies the conditions of theorem \ref{susMain}, then so does $F$: The classes are given by $\sigma(c(n)_j)$. This leads to another corollary.
\begin{corollary}
	With the setting above, we have an isomorphism
	\[ F^{*,*}(G_n)\cong F^{*,*}(S)\otimes_{E^{*,*}(S)} E^{*,*}(G_n) \]
\end{corollary}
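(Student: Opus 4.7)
The plan is to transport the hypotheses of Theorem \ref{susMain} from $E$ to $F$ along the morphism $\sigma$, apply the theorem to obtain an explicit free-module presentation of $F^{*,*}(G_n)$, and then match this with a corresponding presentation of the tensor product $F^{*,*}(S)\otimes_{E^{*,*}(S)} E^{*,*}(G_n)$ computed via the theorem applied to $E$.

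First, I would set $d(n)_j := \sigma(c(n)_j) \in F^{2kj-1,kj}(G_n)$. The two conditions in Theorem \ref{susMain} transfer essentially formally. For condition (1): since $\sigma$ is a morphism of ring spectra, the induced map $\sigma_S : E^{0,0}(S) \to F^{0,0}(S)$ is a unital ring homomorphism, so $\sigma(u)$ is a unit; and since $\sigma$ commutes with the pullback along $f_n$ and sends the canonical element $\theta_{kn}$ for $E$ to the corresponding canonical element for $F$ (both coming from the unit map of the respective spectra), we obtain $d(n)_n = \sigma(u) f_n^*(\theta_{kn}^F)$. For condition (2): naturality of $\sigma$ under $i^*$ gives $i^*(d(n)_j) = \sigma(i^*(c(n)_j))$, and applying $\sigma$ to an integral linear combination of the $c(n-1)_j$ yields the same integral linear combination of the $d(n-1)_j$, so the change-of-basis matrix in $GL_{n-1}(\mathbb{Z})$ is preserved.

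Now Theorem \ref{susMain} applied to $F$ yields
\[
F^{*,*}(G_n) \cong F^{*,*}(S)[d(n)_i]/(d(n)_i^2, \, 1\leq i \leq n),
\]
which is free as an $F^{*,*}(S)$-module on the products $d(n)_{i_1}\cup \cdots \cup d(n)_{i_j}$ with $1\leq i_1 < \ldots < i_j \leq n$. Similarly, Theorem \ref{susMain} applied to $E$ gives a free $E^{*,*}(S)$-module basis of $E^{*,*}(G_n)$ by the analogous products of the $c(n)_i$; viewing $F^{*,*}(S)$ as an $E^{*,*}(S)$-algebra via $\sigma_S$, the tensor product $F^{*,*}(S)\otimes_{E^{*,*}(S)} E^{*,*}(G_n)$ is therefore free as an $F^{*,*}(S)$-module on the elements $1\otimes (c(n)_{i_1}\cup \cdots \cup c(n)_{i_j})$.

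The final step is to define the comparison map
\[
\Phi : F^{*,*}(S)\otimes_{E^{*,*}(S)} E^{*,*}(G_n) \longrightarrow F^{*,*}(G_n), \qquad x\otimes y \longmapsto x \cup \sigma(y),
\]
which is well-defined because $\sigma$ is an $E^{*,*}(S)$-algebra map to $F^{*,*}(G_n)$ through $F^{*,*}(S)$. On the distinguished bases, $\Phi$ sends $1\otimes (c(n)_{i_1}\cup \cdots \cup c(n)_{i_j})$ to $d(n)_{i_1}\cup \cdots \cup d(n)_{i_j}$, hence is a bijection on basis elements and therefore an isomorphism of $F^{*,*}(S)$-modules. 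The only mildly subtle point, and the one I would check carefully, is the identification $\sigma(f_n^*\theta_{kn}^E) = f_n^*\theta_{kn}^F$, which rests on the fact that $\sigma$ is a morphism of \emph{ring} spectra and hence intertwines the canonical Thom/unit classes on the two sides; once this is in place, the rest of the argument is formal.
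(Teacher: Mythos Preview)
Your proposal is correct and follows exactly the approach the paper indicates: the paper states just before the corollary that the classes $\sigma(c(n)_j)$ satisfy the hypotheses of Theorem~\ref{susMain} for $F$, and leaves the rest implicit. You have simply written out the details of that verification and of the resulting comparison of free-module bases, which is precisely what is intended.
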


To prove Theorem \ref{susMain}, we need a simple lemma.
\begin{lemma}
	Let $f: X \to Y$ be a morphism of smooth schemes and let $g_1,\ldots,g_n$ be some homogeneous elements of $ E^{*,*}(X) $. Suppose that there exist a finite open covering $ Y=\bigcup U_i $ such that for any $ i $ and any open $ V\subset U_i $, the ring $ E^{*,*}(f^{-1}(V))$ is a free $ E^{*,*}(V)$-module with basis $ g_k|_{f^{-1}(V)} $. Then $ E^{*,*}(X) $ is a free $ E^{*,*}(Y)$-module with basis $\{ g_k \} $
\end{lemma}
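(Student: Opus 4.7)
The plan is to proceed by induction on the cardinality $N$ of the open covering $Y = \bigcup_{i=1}^N U_i$. The base case $N=1$ is immediate: we have $Y = U_1$, and the hypothesis applied to $V = Y$ itself yields the result directly. For the inductive step, set $U := U_1 \cup \cdots \cup U_{N-1}$ and $W := U_N$, so that $Y = U \cup W$. The inductive hypothesis applies to $U$ with its covering $\{U_i\}_{i<N}$, and the hypothesis of the lemma applies directly to $W = U_N$. Moreover, $U \cap W$ is covered by the opens $\{U_i \cap W\}_{i<N}$. Crucially, each $U_i \cap W$ is open in $U_i$, so every open $V \subset U_i \cap W$ is also open in $U_i$, and therefore the family $\{U_i \cap W\}_{i<N}$ still satisfies the assumption of the lemma. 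The inductive hypothesis then applies to $U \cap W$ as well.

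The next step is to compare two Mayer-Vietoris long exact sequences. For the Zariski cover $\{U, W\}$ of $Y$ and the induced cover $\{f^{-1}(U), f^{-1}(W)\}$ of $X$, noting $f^{-1}(U) \cap f^{-1}(W) = f^{-1}(U \cap W)$, Nisnevich descent for the ring spectrum $E$ produces two Mayer-Vietoris long exact sequences of $E^{*,*}(Y)$-modules. The pullback $f^*$ together with cup product by the global classes $g_k \in E^{*,*}(X)$ induces a morphism of these long exact sequences, from the free-module sequence $\bigoplus_{k=1}^n E^{*,*}(-)\{g_k\}$ to the sequence $E^{*,*}(f^{-1}(-))$, evaluated on $Y$, $U$, $W$, and $U \cap W$. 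By the previous step, the vertical maps over $U$, $W$, and $U \cap W$ are isomorphisms in every bidegree. The five lemma, applied around each instance of $E^{*,*}(Y)$, then forces the vertical map over $Y$ to be an isomorphism as well, which is precisely the claim of the lemma.

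The main technical subtlety is the compatibility of cup product with the Mayer-Vietoris boundary map: one must verify that multiplication by a class $g_k$ pulled back from $X$ commutes with the connecting homomorphism. This is a formal consequence of the fact that $E$ is a ring spectrum and that the Mayer-Vietoris sequence arises from a distinguished triangle in $\SH(S)$, together with the naturality of the smash product with $E$ applied to that triangle. Once this compatibility is in place, the argument reduces to a routine five-lemma diagram chase, and the lemma follows.
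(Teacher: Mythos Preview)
Your proof is correct and takes essentially the same approach as the paper: both arguments compare the two Mayer-Vietoris long exact sequences via the map ``multiply by the $g_k$'' and conclude by the five lemma. The paper only writes out the two-open step and leaves the induction on the size of the cover implicit, whereas you spell out the induction and the compatibility of cup product with the boundary map, but the substance is identical.
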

\begin{proof}
	Applying the Mayer-Vietoris long exact sequence, we obtain a commutative diagram
\[\begin{tikzcd}
	{E^{i,*}(V_1\cup V_2)\otimes \bZ^{n_i}} & {(E^{i,*}(V_1)\oplus E^{i,*}(V_2))\otimes \bZ^{n_i}} & {E^{i,*}(V_1\cap V_2)\otimes \bZ^{n_i}}\\
	{E^{i,*}(f^{-1}(V_1\cup V_2))} & {E^{i,*}(f^{-1}(V_1))\oplus E^{i,*}(f^{-1}(V_2))} & {E^{i,*}(f^{-1}(V_1\cap V_2))}
	\arrow[from=1-1, to=1-2]
	\arrow[from=1-2, to=1-3]
	\arrow["\alpha"', from=1-1, to=2-1]
	\arrow["\cong"', from=1-2, to=2-2]
	\arrow["\cong", from=1-3, to=2-3]
	\arrow[from=2-2, to=2-3]
	\arrow[from=2-1, to=2-2]
\end{tikzcd}\]
where the vertical arrows are obtained by making use of the basis $g_k$. As both rows are exact, $\alpha$ is an isomorphism by the $5$-lemma. Note that the $g_k$ gives the sections of $ R^if_*E$.
\end{proof}
\begin{remark}
	This lemma actually gives a condition for the Leray spectral sequence to collapse at the $E_2$-page. We will see more of this perspective in the next part of the article.
\end{remark}

\begin{proof}[Proof of Theorem \ref{susMain}]
	We prove it by induction on $n$. For $n=1$, this can be concluded by Example \ref{caseAn} since $ G_1\cong \mathbb{A}^k_S\nonZero $.
	
	For general $n$ we apply the lemma to $f_n$, let $\{U_k\}$ be the open covering which trivialize $f_n$, then by our assumption for any open $ V \subset U_k, f_n^{-1}(V)=G_{n-1}\times_S V$ and $ c(n)_j|_{f_n^{-1}(V)} $ coincides with  $  p_2^*i^*(c(n)_j) $ in $E^{*,*}(f_n^{-1}(V))$. Using Assumption 2 and the induction hypothesis, we get that $ (p_2^*i^*(c(n)_1),\ldots,p_2^*i^*(c(n)_{n-1})) \sim (p_2^*(c(n-1)_1),\ldots,p_2^*(c(n-1)_{n-1})) $ forms generators of $ E^{*,*}(f_n^{-1}(V))\cong E^{*,*}(V)\otimes_{E^{*,*}(S)} E^{*,*}(G_{n-1})  $. This implies that  $ E^{*,*}(G_{n})  $ is a free $  E^{*,*}(\af^{kn}_S \nonZero)  $ square-zero algebra with generators $  (c(n)_1,\ldots,c(n)_{n-1}) $, i.e. the basis is given by $ c(n)_{i_1}\cup\ldots\cup c(n)_{i_j}, 1\leq i_1<\ldots<i_j \leq n-1 $.
	
	Then, combining with Example \ref{caseAn}, we can conclude that $ E^{*,*}(G_n) $ as a module is isomorphic to a free $  E^{*,*}(S)  $ square-zero algebra with generators $  (c(n)_1,\ldots,c(n)_{n-1},f_n^*(\theta_{kn}))\sim (c(n)_1,\ldots,c(n)_n) $.
\end{proof}

\subsection{Application to Sp-oriented cohomology}
Thanks to the corollary on the base change formula, we omit from now on the subscript $S$, at least when it is clear from the context. For instance, we write $ \afnz{n}:=\af^{n}_S \nonZero $.

Recall that for any $\Sp$-oriented ring spectrum $E\in \SH(S)$, we can define Borel classes $b_j\in E^{4j,2j}(B\Sp_{2n}), j\leq n$. Using loop spaces, we can define a new series of classes $ \beta(n)_j\in E^{4j-1,2j}(\Sp_{2n}) $:
\[ \beta(n)_j :  \Sp_{2n} \cong \Omega B\Sp_{2n}  \xr{\Omega b_j } \Sigma^{4j-1,2j} E.\]

Note further that $\Sp_{2n}$ is a $2$-fiber family since we have (locally split) fiber sequences for any $n\in \mathbb{N}$:
\[
\Sp_{2n-2} \hookrightarrow \Sp_{2n} \xr{p_n} \afnz{2n}.
\]

\begin{proposition}
	For any $\Sp$-oriented ring spectrum $E$, $\Sp_{2n}$ and $ \beta(n)_j $ satisfy the conditions of Theorem \ref{susMain}.
\end{proposition}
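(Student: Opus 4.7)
The proof verifies conditions (1) and (2) of Theorem \ref{susMain} with $k=2$, $G_n = \Sp_{2n}$, and $c(n)_j = \beta(n)_j$.

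For condition (2), I appeal to the naturality of Borel classes. The inclusion $i: \Sp_{2n-2} \hookrightarrow \Sp_{2n}$ deloops to $Bi: B\Sp_{2n-2} \to B\Sp_{2n}$, under which the tautological rank-$2n$ symplectic bundle restricts to the Whitney sum of its rank-$(2n-2)$ analogue with a trivial hyperbolic plane. Since the total Borel class is multiplicative and vanishes in positive degrees on trivial symplectic bundles, $Bi^*(b_j) = b_j$ for $j \leq n-1$ and $Bi^*(b_n) = 0$. Applying the loop functor (which satisfies $\Omega Bi = i$ after the canonical identification $\Omega B\Sp_{2m} \simeq \Sp_{2m}$) yields $i^*(\beta(n)_j) = \beta(n-1)_j$ for $j \leq n-1$; the required transition matrix in $GL_{n-1}(\bZ)$ is the identity.

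For condition (1), I must show $\beta(n)_n = u \cdot p_n^*(\theta_{2n})$ for some unit $u \in E^{0,0}(S)$. The plan is to exploit the extended fiber sequence in $\mathcal{H}_*(S)$
\[\Sp_{2n-2} \to \Sp_{2n} \xr{p_n} \afnz{2n} \to B\Sp_{2n-2} \xr{Bi} B\Sp_{2n},\]
obtained by delooping the quotient presentation $\Sp_{2n}/\Sp_{2n-2} \cong \afnz{2n}$. The vanishing $Bi^*(b_n) = 0$ established above ensures that $b_n$ transgresses via the connecting morphism to a class in $E^{4n-1,2n}(\afnz{2n})$, which is a free rank-one $E^{*,*}(S)$-module on $\theta_{2n}$ by Example \ref{caseAn}. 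Hence the transgressed class equals $u \cdot \theta_{2n}$ for some $u$, and pulling back along $p_n$ recovers $\beta(n)_n$ by construction of the loop suspension.

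The main obstacle is verifying that the coefficient $u$ is genuinely a unit, rather than merely a potentially non-invertible element of $E^{0,0}(S)$. My strategy is to reduce to $n=1$ by naturality: the standard block embedding $\Sp_2 \hookrightarrow \Sp_{2n}$ is compatible with $p_1$, $p_n$ and with the coordinate inclusion $\afnz{2} \hookrightarrow \afnz{2n}$, so the unit computed for $n=1$ propagates to arbitrary $n$. In the base case, $\Sp_2 = \SL_2$ and $p_1: \SL_2 \to \afnz{2}$ (the "first column" map) is a Zariski-locally trivial $\af^1$-bundle and hence an $\af^1$-weak equivalence; therefore $\beta(1)_1$ and $\theta_2$ generate the same rank-one free $E^{*,*}(S)$-module $E^{3,2}(\SL_2)$, and the ratio can be pinned down from the normalization of $b_1$ as the universal Thom/Euler class on $B\Sp_2$ together with the identification of $\Sigma^\infty_+ \afnz{2}$ with the desuspension of the relevant Thom space in $\SH(S)$.
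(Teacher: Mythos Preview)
Your handling of Condition~2 via naturality of Borel classes is correct and matches the paper's one-line justification.

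For Condition~1 there is a genuine gap in the unit argument. Your reduction to $n=1$ via the block embedding $\Sp_2\hookrightarrow\Sp_{2n}$ and the coordinate inclusion $\afnz{2}\hookrightarrow\afnz{2n}$ cannot detect~$u$: for $n\geq 2$ the latter inclusion is $\af^1$-nullhomotopic (slide $(x,y,0,\ldots,0)$ to $(0,0,1,0,\ldots,0)$ through nonzero vectors), so $\theta_{2n}$ restricts to~$0$ on~$\afnz{2}$; and the Whitney formula forces $b_n|_{B\Sp_2}=0$, hence $\beta(n)_n|_{\Sp_2}=0$. Naturality then reads $0=u\cdot 0$, which says nothing about~$u$. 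Your transgression step is also not justified as written: the sequence $\afnz{2n}\to B\Sp_{2n-2}\to B\Sp_{2n}$ is a \emph{fiber} sequence, so vanishing of $Bi^*(b_n)$ does not by itself produce a class on~$\afnz{2n}$ whose pullback along~$p_n$ is~$\beta(n)_n$; a cofiber-type exact sequence is what is needed, and the cofiber of $i:\Sp_{2n-2}\to\Sp_{2n}$ is $(\Sp_{2n-2})_+\wedge\afnz{2n}$ (Lemma~\ref{SpCof}), not $(\afnz{2n})_+$.

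The paper bypasses both issues with a direct geometric factorization, carried out for $E=\MSp$ (which suffices by universality). The key input is the identification $B\Sp_{2n-2}\simeq\mathcal{U}_{2n}^*$, the complement of the zero section in the tautological symplectic bundle over $B\Sp_{2n}$. With this in hand one checks by a diagram chase that the loop of the zero-section map $B\Sp_{2n}\to\mathrm{Th}(\mathcal{U}_{2n})$, which represents $\beta(n)_n$, equals $f\circ p_n$ where $f:\afnz{2n}\to\Omega\mathrm{Th}(\mathcal{U}_{2n})$ is induced by the fiber inclusion and represents~$\theta_{2n}$. This yields $\beta(n)_n=p_n^*(\theta_{2n})$ on the nose with $u=1$, so no separate unit verification is needed.
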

\begin{proof}
	We just need to prove the result for the universal $\Sp$-oriented spectrum $\MSp$. Condition 2 is automatically satisfied by the properties of Borel classes. We only need to verify that $\beta(n)_n$ satisfies Condition 1. For this, we consider the morphism $g$ defined as the looping of the canonical map
\[
	B\Sp_{2n}\to \mathrm{Th}(\mathcal{U}_{2n})
\]  
	where $\mathrm{Th}(\mathcal{U}_{2n})$ is the Thom space of the tautological symplectic bundle $\mathcal{U}_{2n}$ over $ B\Sp_{2n}$, and the morphism $f$  induced by the following diagram,  
	where $\mathcal{U}_{2n}^* = \mathcal{U}_{2n} \setminus B\Sp_{2n}$ is the complement of the zero section and $i$  is the fiber embedding along $S=\Sp_0\to \Sp_{2n}$
	\[\begin{tikzcd}
		{\afnz{2n}} & {\af^{2n}} \\
		{\mathcal{U}_{2n}^*} & {\mathcal{U}_{2n}} \\
		{*} & {\mathrm{Th}(\mathcal{U}_{2n})}
		\arrow[from=1-1, to=1-2]
		\arrow[from=2-1, to=2-2]
		\arrow[from=2-1, to=3-1]
		\arrow[from=3-1, to=3-2]
		\arrow[from=2-2, to=3-2]
		\arrow["\lrcorner"{anchor=center, pos=0.125, rotate=180}, draw=none, from=3-2, to=2-1]
		\arrow["\lrcorner"{anchor=center, pos=0.125}, draw=none, from=1-1, to=2-2]
		\arrow["i", from=1-1, to=2-1]
		\arrow[from=1-2, to=2-2]
	\end{tikzcd}\]

To prove the statement, we need to show that the diagram	
	\[\begin{tikzcd}
		{\Sp_{2n}} & {\afnz{2n}} \\
		& {\Omega\mathrm{Th}(\mathcal{U}_{2n})} \\
		& {\Sigma^{4n-1, 2n}\MSp}
		\arrow["f", from=1-2, to=2-2]
		\arrow["p_n", from=1-1, to=1-2]
		\arrow["g"', from=1-1, to=2-2]
		\arrow[from=2-2, to=3-2]
		\arrow["{\beta(n)_n}"', from=1-1, to=3-2]
		\arrow["{\theta_{2n}}", curve={height=-30pt}, from=1-2, to=3-2]
	\end{tikzcd}\]
	is commutative, which amounts to show that $g=fp_n$.

  This comes from the following commutative diagram, where $z$ is the zero section and the fact that $rh$ coincides with $i$ in the diagram above (this is due to the properties of canonical bundles over quaternionic grassmannians \cite[Theorem 4.1]{panin2022quaternionic}). 
	\[\begin{tikzcd}
		{\Sp_{2n}} & {\afnz{2n}} & {*} \\
		{*} & {\BSp_{2n-2}} & {\BSp_{2n}} \\
		& {\mathcal{U}_{2n}^*} & {\mathcal{U}_{2n}} \\
		& {*} & {\mathrm{Th}(\mathcal{U}_{2n})}
		\arrow[from=1-2, to=1-3]
		\arrow["h",from=1-2, to=2-2]
		\arrow[from=1-3, to=2-3]
		\arrow[from=2-2, to=2-3]
		\arrow["r"',"\cong",from=2-2, to=3-2]
		\arrow["z"',"\cong",from=2-3, to=3-3]
		\arrow[from=3-2, to=3-3]
		\arrow[from=3-2, to=4-2]
		\arrow[from=4-2, to=4-3]
		\arrow[from=3-3, to=4-3]
		\arrow["\lrcorner"{anchor=center, pos=0.125}, draw=none, from=1-2, to=2-3]
		\arrow["\lrcorner"{anchor=center, pos=0.125, rotate=180}, draw=none, from=4-3, to=3-2]
		\arrow["p_n",from=1-1, to=1-2]
		\arrow[from=1-1, to=2-1]
		\arrow[from=2-1, to=2-2]
		\arrow["\lrcorner"{anchor=center, pos=0.125}, draw=none, from=1-1, to=2-2]
	\end{tikzcd}\]
\end{proof}

\begin{theorem}\label{SpHmw}
	The graded ring $\HH^{*,*}(\Sp_{2n}) $ is a free $\HH^{*,*}(S)$-square-zero algebra generated by the classes $ \beta(n)_j\in \HH^{4j-1,2j}(\Sp_{2n}) $ for $j=1,\ldots,n$.
\end{theorem}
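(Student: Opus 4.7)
The plan is to apply Theorem \ref{susMain} directly to the ring spectrum $E = \HH$. Since the proposition immediately preceding this statement already verifies the hypotheses of Theorem \ref{susMain} for any $\Sp$-oriented ring spectrum, and $\HH$ is $\Sp$-oriented as noted in the recollections subsection, the desired conclusion should follow at once without additional work.

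Concretely, I would first recall that $\{\Sp_{2n}\}$ is a $2$-fiber family over $S$ via the locally trivial fibrations $\Sp_{2n-2} \hookrightarrow \Sp_{2n} \xr{p_n} \afnz{2n}$, so the parameter $k$ of Theorem \ref{susMain} equals $2$. The classes required for that theorem live in bidegree $E^{2kj-1, kj}(\Sp_{2n}) = \HH^{4j-1, 2j}(\Sp_{2n})$, which matches exactly the bidegree of the $\beta(n)_j$ obtained by looping the Borel classes $b_j \in \HH^{4j, 2j}(\BSp_{2n})$.

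Next, I would invoke the preceding proposition, which checks both hypotheses of Theorem \ref{susMain} for any $\Sp$-oriented ring spectrum: condition~2 is automatic from the functoriality of Borel classes under fiber embeddings of symplectic bundles, and condition~1 is reduced there to the commutativity of a diagram relating $\beta(n)_n$, $p_n^*(\theta_{2n})$, and the Thom space $\mathrm{Th}(\mathcal{U}_{2n})$ of the tautological symplectic bundle. Specializing that proposition to $E = \HH$ and feeding it into Theorem \ref{susMain} yields
\[
\HH^{*,*}(\Sp_{2n}) \cong \HH^{*,*}(S)[\beta(n)_1, \ldots, \beta(n)_n] / (\beta(n)_j^2,\, 1 \leq j \leq n),
\]
free as an $\HH^{*,*}(S)$-module on the $2^n$ monomials $\beta(n)_{i_1} \cup \cdots \cup \beta(n)_{i_j}$ for $1 \leq i_1 < \ldots < i_j \leq n$.

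There is no genuine obstacle at this point; the theorem is essentially a one-line specialization of the two preceding results. All the substantive content has already been absorbed either into the $\Sp$-oriented formalism (existence of Borel classes, the splitting principle, the Thom-space diagram identifying $\beta(n)_n$ with $p_n^*(\theta_{2n})$ up to a unit) or into the general Leray-Hirsch style Theorem \ref{susMain}. If anything deserves a brief remark, it is only the observation that the universal construction of $\beta(n)_j$ via $\MSp \to \HH$ is compatible with the $\Sp$-orientation of $\HH$, which is formal.
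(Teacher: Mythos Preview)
Your argument for the additive structure is exactly what the paper does: invoke the preceding proposition to verify the hypotheses of Theorem~\ref{susMain} for the $\Sp$-oriented spectrum $\HH$, and read off the free $\HH^{*,*}(S)$-module structure with basis the monomials $\beta(n)_{i_1}\cup\cdots\cup\beta(n)_{i_j}$.

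However, there is a genuine gap in your claim that this already yields the \emph{ring} isomorphism. Theorem~\ref{susMain} is stated and proved only at the level of modules: the paper says explicitly at the start of Section~\ref{ComputeSp} that ``all the isomorphisms discussed in this section are isomorphisms of modules,'' and the statement of Theorem~\ref{susMain} reads ``isomorphic to a freely generated square-zero algebra \emph{as a module}.'' The Leray--Hirsch style argument produces a module basis; it does not show that $\beta(n)_j^2=0$ in $\HH^{*,*}(\Sp_{2n})$. Accordingly, the paper's own proof of Theorem~\ref{SpHmw} only claims the additive isomorphism here and explicitly defers the multiplicative structure to Theorem~\ref{VSpHmw}.

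The missing ingredient is a separate degree-and-vanishing argument specific to $\HH$. In the proof of Theorem~\ref{VSpHmw} the paper writes $\beta_{n-k}^2 = \sum_j c_j\beta_j$ with $c_j \in \HH^{4(n-k)-2j-1,\{4(n-k)-2j\}}(K)$ and then uses the vanishing of $\HH^{p,\{q\}}(K)$ for $p>0$ and for $p<0,\,q\le 0$ (Remark~\ref{HHvainish}) to force all $c_j=0$. This step genuinely uses coconnectivity of $\HH$ and is not available for an arbitrary $\Sp$-oriented spectrum, so it cannot be absorbed into Theorem~\ref{susMain}. You should either add this argument or, as the paper does, flag that the multiplicative part is completed later.
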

\begin{proof}
	We note that $\HH^{*,*}(\Sp_{2n})$ is $\Sp$-oriented, and combining this with our previous results yields the isomorphism for the additive structure. As for the multiplicative structure, we will come back to it later in Theorem \ref{VSpHmw}.
\end{proof}

We now establish the cofiber sequences required to obtain a motivic decomposition of $\Sp_{2n}$.

\begin{lemma}\label{SpCof}
	For any $n\geq 2$, there exists a cofiber sequence in $\mathcal{H}(S)$
	\[ (\Sp_{2n-2})_+\to(\Sp_{2n})_+\to (\Sp_{2n-2})_+\wedge (\afnz{2n})  \]
\end{lemma}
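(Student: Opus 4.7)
The plan is to exploit Suslin's Zariski local triviality of $p_n: \Sp_{2n} \to \afnz{2n}$ together with a Mayer-Vietoris argument in $\mathcal{H}(S)$, reducing the question to the elementary trivial-bundle case.

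First, in the trivial case $\Sp_{2n} \cong \Sp_{2n-2} \times \afnz{2n}$ with basepoint $(e,x_0)$, the fiber inclusion $\Sp_{2n-2} = \Sp_{2n-2}\times\{x_0\}\hookrightarrow \Sp_{2n-2}\times \afnz{2n}$ becomes, after adjoining a disjoint basepoint, the map $(\Sp_{2n-2})_+ = (\Sp_{2n-2})_+\wedge S^0 \hookrightarrow (\Sp_{2n-2})_+ \wedge (\afnz{2n})_+$, obtained by smashing $(\Sp_{2n-2})_+$ with the canonical inclusion $S^0\hookrightarrow (\afnz{2n})_+$. Its cofiber is $(\Sp_{2n-2})_+\wedge \afnz{2n}$, establishing the lemma in this baseline case.

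For the general case, I would invoke Suslin's theorem to choose a Zariski cover $\{U_\alpha\}$ of $\afnz{2n}$ over which $p_n$ trivializes, arranged so that each $U_\alpha$ contains the basepoint $x_0$ (possible since $\afnz{2n}$ is irreducible) and so that the chosen local sections $s_\alpha: U_\alpha \to \Sp_{2n}$ of $p_n$ satisfy $s_\alpha(x_0) = e$. The induced trivializations then identify the fiber $\Sp_{2n-2}\subset \Sp_{2n}$ with $\Sp_{2n-2}\times \{x_0\}$ on each piece, and the resulting \v{C}ech cocycle of transition functions is pointed. By Zariski descent in $\mathcal{H}(S)$, $\Sp_{2n}$ is the homotopy colimit of its restrictions $\Sp_{2n}|_{U_I}$ over the nerve of the cover, where $U_I := \bigcap_{\alpha\in I} U_\alpha$. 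Each $U_I$ contains $x_0$, so $\Sp_{2n-2}$ embeds in every $\Sp_{2n}|_{U_I}$, and the local cofiber is $(\Sp_{2n-2})_+\wedge U_I$ (pointed at $x_0$) by the trivial-case computation above. Since cofibers and smashing with a fixed pointed object commute with homotopy colimits, gluing yields
\[
(\Sp_{2n})_+/(\Sp_{2n-2})_+ \simeq \mathrm{hocolim}_I\, (\Sp_{2n-2})_+\wedge U_I \simeq (\Sp_{2n-2})_+\wedge \mathrm{hocolim}_I\, U_I \simeq (\Sp_{2n-2})_+\wedge \afnz{2n}.
\]

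The main obstacle is arranging the local trivializations to glue compatibly while respecting the basepoint: the fiber $\Sp_{2n-2}$ must sit inside $\Sp_{2n}|_{U_I}$ in a way that aligns with the local smash decomposition on every intersection. This is resolved by insisting on sections $s_\alpha$ with $s_\alpha(x_0) = e$, which forces the \v{C}ech cocycle of transitions to fix the basepoint and allows the local cofibers to assemble into the global smash product without introducing twisting.
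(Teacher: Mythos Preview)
Your argument has a genuine gap at the step
\[
\mathrm{hocolim}_I\, (\Sp_{2n-2})_+\wedge U_I \;\simeq\; (\Sp_{2n-2})_+\wedge \mathrm{hocolim}_I\, U_I.
\]
Smashing with a fixed pointed object commutes with a homotopy colimit only when the diagram on the left is of the form $(\Sp_{2n-2})_+\wedge(-)$ applied to a diagram of pointed spaces, i.e.\ when the structure maps have the form $\mrm{id}\wedge \iota_{IJ}$. But after choosing trivializations $\phi_\alpha: p_n^{-1}(U_\alpha)\cong \Sp_{2n-2}\times U_\alpha$, the transition maps on overlaps are $(h,u)\mapsto (g_{\alpha\beta}(u)\cdot h,u)$ for the cocycle $g_{\alpha\beta}: U_{\alpha\beta}\to \Sp_{2n-2}$, and these mix the two factors. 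Your condition $s_\alpha(x_0)=e$ only forces $g_{\alpha\beta}(x_0)=e$, so the induced maps on the local cofibers are pointed; they are still not product maps. The colimit you write down is a priori a \emph{twisted} smash product, and showing that it untwists is exactly the content of the lemma, not a formality.

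The paper avoids this issue by never gluing trivializations. It instead takes an open/closed decomposition of $\Sp_{2n}$ along the map $p:\Sp_{2n}\to \af^{2n-1}$ sending $A$ to $(A_{12},\ldots,A_{1,2n})$: one computes $p^{-1}(0)\simeq \Gm\times\Sp_{2n-2}$ directly from the symplectic equations, and exhibits an explicit $\af^1$-retraction of $p^{-1}(\afnz{2n-1})$ onto $\Sp_{2n-2}\times\afnz{2n-1}$ by multiplying with elementary symplectic matrices (which are $\af^1$-nullhomotopic). The localization cofiber sequence for $p^{-1}(0)\subset \Sp_{2n}$ then reads
\[
(\Sp_{2n-2}\times\afnz{2n-1})_+\to (\Sp_{2n})_+\to (\Gm\times\Sp_{2n-2})_+\wedge \Sigma\afnz{2n-1},
\]
and an octahedron comparing this with $\Sp_{2n-2}\times\afnz{2n-1}\hookrightarrow \Sp_{2n-2}\times\af^{2n-1}$, together with $\Gm\wedge\Sigma\afnz{2n-1}\cong\afnz{2n}$, yields the claimed sequence. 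The explicit matrix retraction is what replaces the abstract untwisting your argument is missing.
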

\begin{proof}
	We provide a proof of this lemma in Appendix \ref{appendix} using matrix computations. We also refer to Section \ref{SVCof}, where we will see similar computations.
\end{proof}

With this sequence in place, we can now derive the required decomposition result.

\begin{theorem}
	\label{SpMain}
	The cofiber sequence of Lemma \ref{SpCof} splits in $\DMt(S)$. In particular, it induces an isomorphism
	\[ \Mt(\Sp_{2n})\cong  \Mt(\afnz{2n})\otimes \Mt(\Sp_{2n-2})\]
	Moreover, the following decomposition holds in $\DMt(S)$: 
	\[\Mt(\Sp_{2n})\cong \Mt(\afnz{2n})\otimes \Mt(\afnz{2n-2})\otimes\ldots \otimes \Mt(\afnz{2}) \cong \bigoplus_{1\leq i_1<\ldots<i_j \leq n} \tbZ(d(i_1,\ldots,i_j))  .\] 
	where $\tbZ(d(i_1,\ldots,i_j))$ stands for $\tbZ(\sum_{l=1}^j 2i_l)[4(\sum_{l=1}^j i_l) -j] $.
\end{theorem}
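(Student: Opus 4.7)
The plan is to split the cofiber sequence of Lemma \ref{SpCof} in $\DMt(S)$ and then iterate. I would proceed by induction on $n$. The base case $n=1$ is immediate since $\Sp_2 \cong \SL_2 \cong \afnz{2}$, so that $\Mt(\Sp_2) \cong \Mt(\afnz{2})$.

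For the inductive step, applying $\Mt$ to the cofiber sequence of Lemma \ref{SpCof} yields a distinguished triangle in $\DMt(S)$:
\[ \Mt(\Sp_{2n-2}) \xrightarrow{i_*} \Mt(\Sp_{2n}) \xrightarrow{\pi} \Mt(\Sp_{2n-2}) \otimes \tbZ(2n)[4n-1] \xrightarrow{\partial} \Mt(\Sp_{2n-2})[1], \]
using that the reduced motive of $\afnz{2n}$ at a basepoint is $\tbZ(2n)[4n-1]$. To split this triangle, I would construct an explicit retraction $r: \Mt(\Sp_{2n}) \to \Mt(\Sp_{2n-2})$ of $i_*$. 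By the induction hypothesis, $\Mt(\Sp_{2n-2}) \cong \bigoplus_I \tbZ(d(I))$ is a direct sum of Tate twists indexed by subsets $I = \{i_1 < \ldots < i_j\} \subseteq \{1,\ldots,n-1\}$, so $r$ decomposes as a tuple of maps $r_I : \Mt(\Sp_{2n}) \to \tbZ(d(I))$. I would define each $r_I$ as the diagonal $\Mt(\Sp_{2n}) \to \Mt(\Sp_{2n})^{\otimes j}$ followed by $\beta(n)_{i_1} \otimes \cdots \otimes \beta(n)_{i_j}$, viewing each Borel class as a morphism $\Mt(\Sp_{2n}) \to \tbZ(2i_l)[4i_l - 1]$ in $\DMt(S)$ via the $\Sp$-orientation of $\HH$.

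To verify $r \circ i_* = \mathrm{id}$, I would use the restriction property of Borel classes: $i^* \beta(n)_j$ coincides with $\beta(n-1)_j$ up to a $\mathrm{GL}_{n-1}(\bZ)$-transformation (Theorem \ref{susMain}, Condition 2), so the cup products restrict correctly onto the corresponding Tate summand of $\Mt(\Sp_{2n-2})$. Combined with Theorem \ref{SpHmw}, which identifies $\HH^{*,*}(\Sp_{2n-2})$ as a free $\HH^{*,*}(S)$-module on the Borel monomials, this suffices to show $r$ retracts $i_*$ and hence splits the triangle. This gives $\Mt(\Sp_{2n}) \cong \Mt(\afnz{2n}) \otimes \Mt(\Sp_{2n-2})$; iterating down to $\Sp_2 \cong \afnz{2}$ yields the tensor product decomposition, and distributing over the direct sums $\Mt(\afnz{2k}) = \tbZ \oplus \tbZ(2k)[4k-1]$ produces the final expression indexed by increasing subsets $\{i_1 < \ldots < i_j\} \subseteq \{1,\ldots,n\}$.

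The main obstacle will be the construction and verification of the retraction $r$ at the motivic (rather than merely cohomological) level: one must check that the cup-product formula is well-defined in $\DMt(S)$ and that the cohomological identity lifts to the motive level, which relies crucially on the Tate nature of $\Mt(\Sp_{2n-2})$ supplied by the induction hypothesis. A purely abstract alternative would be to show directly that the connecting morphism $\partial$ vanishes by analyzing the relevant Hom-groups $\rHom_{\DMt(S)}(\tbZ(a)[b], \tbZ(c)[d]) \cong \HH^{d-b,c-a}(S)$ summand by summand; however this demands a delicate bidegree count combining motivic and Witt cohomology vanishing of $S$, which makes the explicit retraction preferable.
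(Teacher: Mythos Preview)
Your proposal is correct and follows essentially the same approach as the paper: both argue by induction on $n$, use the Tate decomposition of $\Mt(\Sp_{2n-2})$ from the inductive hypothesis to reduce the construction of a retraction to a collection of cohomology classes, and take these to be the cup products of Borel classes, appealing to Theorem~\ref{SpHmw} and the restriction identity $i^*\beta(n)_j=\beta(n-1)_j$ to verify $r\circ i_*=\mathrm{id}$. The paper makes the passage from cohomology to $\rHom_{\DMt}$ explicit via the identification $\rHom_{\DMt}(\Mt(\Sp_{2n}),\tbZ(d(I)))\cong \HH^{d(I)}(\Sp_{2n})$, which is exactly the point you flag as the main obstacle.
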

\begin{proof}
	We will prove this theorem by induction on $n$. For the base case $n=1$, we note that $\Sp_2 \cong \afnz{2}$, and that $\Mt(\afnz{2})\cong \tbZ \oplus \tbZ(2)[3]$.

	Now, suppose the theorem holds for some $n-1\geq 1$. If the sequence in Lemma \ref{SpCof} splits after taking $\Mt$, we then have 
	\[ \Mt(\Sp_{2n})\cong \Mt(\Sp_{2n-2}) \oplus (\Mt(\Sp_{2n-2})\otimes \Mt(\afnz{2n})/\tbZ) \]
	\[\cong \bigoplus_{1\leq i_1<\ldots<i_j \leq n-1} \tbZ(d(i_1,\ldots,i_j)) \oplus  \bigoplus_{1\leq i_1<\ldots<i_j \leq n-1} \tbZ(d(i_1,\ldots,i_j,n)) \]

	To complete the proof, we are thus reduced to show that the triangle
	\[ \Mt(\Sp_{2n-2})\xr{i} \Mt(\Sp_{2n}) \to \Mt(\Sp_{2n-2})\otimes \Mt(\afnz{2n})/\tbZ \xr{+1} \] splits via a section $s:\Mt(\Sp_{2n})\to \Mt(\Sp_{2n-2})$ such that $si = \mrm{id}$.

	As $ \Mt(\Sp_{2n-2})\cong \bigoplus_{1\leq i_1<\ldots<i_j \leq n-1} \tbZ(d(i_1,\ldots,i_j)) $, we get the following diagram:
	\[
	\begin{tikzcd}
		\rHom_{\DMt}(\Mt(\Sp_{2n}),\Mt(\Sp_{2n-2}))\ar[r, "i^*"]\ar[d,"p_{i_1,\ldots,i_j}"] 
		&\rHom_{\DMt}(\Mt(\Sp_{2n-2}),\Mt(\Sp_{2n-2}))\ar[d,"p_{i_1,\ldots,i_j}"] \\
		 \rHom_{\DMt}(\Mt(\Sp_{2n}),\tbZ(d(i_1,\ldots,i_j)))\ar[r, "i^*"]\ar[d] 
		&\rHom_{\DMt}(\Mt(\Sp_{2n-2}),\tbZ(d(i_1,\ldots,i_j))\ar[d]\\
		\HH^{d(i_1,\ldots,i_j)}(\Sp_{2n},\bZ)\ar[r, "i^*"]
		&\HH^{d(i_1,\ldots,i_j)}(\Sp_{2n-2},\bZ)
	\end{tikzcd}
	\]
	where $p_{i_1,\ldots,i_j}$ is the projector onto the direct summand of $\Mt(\Sp_{2n-2})$ corresponding to $(i_1,\ldots,i_j)$. Since we have $i^*(\beta(n)_j)=\beta(n-1)_j$ for $1\leq j\leq n-1$, we can deduce that $i^*$ is surjective using Corollary \ref{SpHmw} on the bottom row. Thus the first line of the diagram is also surjective. Therefore, we can define such an $s:\Mt(\Sp_{2n})\to \Mt(\Sp_{2n-2})$ by $s=\oplus_{1\leq j_1\leq\ldots\leq j_k\leq n-1} \beta(n)_{j_1}\cup \ldots \cup \beta(n)_{j_k} $ and the theorem is proven.
\end{proof}
\begin{remark}
	Since $\rHom_{\mathrm{Mod}_E(\SH(S))}(\Sigma^{\infty}_+X,\Sigma^{p,q}E) \cong E^{p,q}(X)$, the same proof also works for the decomposition of $E\wedge \Sigma^{\infty}_+ \Sp_{2n}\in \mathrm{Mod}_E(\SH(S))$ for any $\Sp$-oriented spectrum $E$.
\end{remark}
\begin{remark}
	The results of this section also apply to other oriented theories. For example, for $\GL$-oriented theories, in particular $ \mathrm{H}_{\mathrm{M}}$, we can also get similar results for $ \mathrm{H}_{\mathrm{M}}(\GL_n) $ and $M(\GL_n)\in \DM(S) $ with Chern classes, etc.
\end{remark}

\section[Computations on Stiefel varieties]{Computations on Stiefel varieties, $\GL_n$ and $\SL_n$}

As we mentioned before, $\HH$ is not a $\GL$-oriented cohomology theory, making the methods used for computing the ordinary motives of algebraic groups and Stiefel varieties impossible to apply. Instead, we will follow the classic computations of the singular cohomology groups of $O_n$, which make use of the Leray spectral sequences and inductive computation involving all Stiefel manifolds.

For $k\leq n$, let then $V_k(\af^n)$ be the Stiefel variety, which is the variety of $k$ linearly independent vectors in $\af^n_K$. In particular, $V_n(\af^n_K)\cong \GL_n$ and $V_1(\af^n_K)\cong \af^{n}_K\setminus \{0\}$. We can also consider these varieties over a more general base scheme $S$ over $K$, in which case the relevant varieties are just pulled-back from $K$. In the sequel, we omit the subscript with the convention that we always work over a base scheme $S$. There is a canonical identification
\[
V_k(\af^n)\cong \mathrm{GL}_n/\mathrm{GL}_{n-k}
\]
which allows us to see Stiefel varieties as homogeneous spaces.

We first notice that for $l<k\leq n$ there are fiber sequences for Stiefel varieties of the form
\begin{equation}
	\label{fibSV}
V_{k-l}(\af^{n-l})\to V_k(\af^n)\to V_l(\af^n)
\end{equation}
as one can see from \cite[Theorem 2.1]{Asok12a}. In fact, these fiber sequences are Zariski locally split sequences.

Let $\mathbf{HS}_{2k}:=\Meta(S)\oplus \Meta(S)(2k)[4k-1]$ and $\mathbf{HS}_{2k+1}:=\Meta(S)\oplus \Meta(S)(4k+1)[8k]$. As explained in the introduction, we might as well have set $\mathbf{HS}_{2k}\cong\Meta(S)\oplus \Meta(S)[2k-1]$ and $\mathbf{HS}_{2k+1}\cong\Meta(S)\oplus \Meta(S)[4k-1]$. Let $\Heta$ be the $\eta$-inverted cohomology theory induced by the $\eta$-inverted cycle module $\mathbf{W}$ (see more on section \ref{MWmod}) over $K$. Then $\Heta$ can be treated as a one-graded theory: $ \Heta^*:=\oplus_n \Heta^{n+*,n}\cong \Heta^{*,0}[\eta,\eta\inv]$.

\begin{theorem}
	\label{SVCohMain}
	The following computations hold:
\[
\Heta^*(V_{k}(\af^{n}))\cong \begin{cases}\Heta^*(V_{k-1}(\af^{n})) \otimes_{\Heta^*(S)} \Heta^*(\afnz{n-k+1}) & \text{ if $ n-k $ is even.} \\
\Heta^*(V_{k-2}(\af^{n})) \otimes_{\Heta^*(S)} \Heta^*(\mathbf{HS}_{n-k+2}) & \text{ if $ n-k $ is odd. } \end{cases}
\]
Consequently, we inductively obtain an isomorphism of graded algebras 
	\[\Heta^{*}(V_{k}(\af^{n}))\cong \Heta^{*}(S)[G]/(g^2-\delta_{\gamma_0}(g)[-1]g , g\in G)\] 
	where the generators $G$ are given by 
	\begin{equation*}
		G=\begin{cases}
			G_1\sqcup G_2\sqcup G_3 &\text{$n$ even, $n-k$ even,}\\
			G_2\sqcup G_3 &\text{$n$ odd, $n-k$ even,}\\
			G_1\sqcup G_2 &\text{$n$ even, $n-k$ odd,}\\
			G_2 &\text{$n$ odd, $n-k$ odd,}\\
			\end{cases}
	\end{equation*}
	with 
	\begin{eqnarray*}
		G_1&=&\{\alpha_{n-1}\in \Heta^{n-1}(\mathbf{HS}_n)\},\\
		G_2&=&\{\beta_{4j-1}\in \Heta^{4j-1}(\mathbf{HS}_{2j+1})|n-k<2j<n\},\\
		G_3&=&\{\gamma_{n-k} \in \Heta^{n-k}(\afnz{n-k+1})\},
	\end{eqnarray*}
	and $\delta_{\gamma_0}(\gamma_0)=1$, otherwise $ \delta_{\gamma_0}(g)=0$.

\end{theorem}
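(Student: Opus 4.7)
The plan is to prove the inductive isomorphism by applying the $\af^1$-homotopy Leray spectral sequence of \cite{asok2018homotopy} to an appropriately chosen fiber sequence from (\ref{fibSV}), and then to iterate the reduction in order to read off the list of generators and relations.

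The choice of fiber sequence depends on the parity of $n-k$. When $n-k$ is even I would use (\ref{fibSV}) with $l = k-1$, giving
\[
\afnz{n-k+1} \to V_k(\af^n) \to V_{k-1}(\af^n),
\]
whose fiber is $\afnz{}$ of odd index, hence has $\Heta^*$-cohomology $\Heta^*(S) \oplus \Heta^{*-(n-k)}(S)$. When $n-k$ is odd I would use (\ref{fibSV}) with $l = k-2$, giving
\[
V_2(\af^{n-k+2}) \to V_k(\af^n) \to V_{k-2}(\af^n),
\]
whose fiber is identified with $\mathbf{HS}_{n-k+2}$ at the $k=2$ base case of the induction, so again its $\Heta^*$-cohomology is free of rank two over $\Heta^*(S)$ with nontrivial generator in degree $2(n-k)+1$. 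In both cases the fiber cohomology is free over $\Heta^*(S)$, so the $E_2$-page of the spectral sequence is a tensor product of base and fiber cohomologies. The core step is then a Leray--Hirsch argument, the spectral-sequence version of the lemma preceding Theorem \ref{susMain} and the remark that follows it: produce a global class on $V_k(\af^n)$ restricting to the nontrivial generator of the fiber, which forces degeneration at $E_2$ and yields the claimed multiplicative Künneth-type isomorphism.

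Iterating this reduction then assembles the generators. Starting from $V_k(\af^n)$, at most one initial even-reduction step contributes a single $\gamma_{n-k} \in G_3$, exactly in the case $n-k$ even; every subsequent odd-reduction step contributes a $\beta_{4j-1} \in G_2$ with $2j+1$ equal to the current fiber index, and together these realise the range $\{j : n-k < 2j < n\}$ after one checks the parity pattern of the $n-k'$ visited. A parity count on the iteration shows that when $n$ is even the process terminates at $V_1(\af^n) \cong \afnz{n}$, contributing a final $\alpha_{n-1} \in G_1$, whereas when $n$ is odd it terminates at $V_0(\af^n) = S$ without further contribution; this reproduces the four cases of the theorem. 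The multiplicative relations then follow from the Leray--Hirsch isomorphism combined with the $(-1,\aBra{-1})$-commutativity of $\Heta^*$: for a homogeneous class $g$ of bidegree $(p,q)$, the identity $g \cup g = (-1)^p \aBra{-1}^q g \cup g$ gives $(1 - (-1)^p \aBra{-1}^q) g^2 = 0$, which forces $g^2 = 0$ unless the $\eta[-1]$ term in $\aBra{-1} = 1 + \eta[-1]$ contributes nontrivially. A direct bidegree inspection shows this happens precisely for $\gamma_0$, yielding the relation $g^2 = \delta_{\gamma_0}(g)[-1]g$.

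The main obstacle I expect is constructing the required global lifts, particularly in the odd-reduction case, where there is no obvious geometric projection $V_k(\af^n) \to V_2(\af^{n-k+2})$ analogous to the one available when the fiber is a sphere. A natural route is to exploit the $\Sp$-orientation and Borel-class technology of Section \ref{ComputeSp} to build the $\beta_{4j-1}$ as loopings of universal classes; alternatively, one can argue formally via naturality of the Leray spectral sequence against a simpler model. A secondary bookkeeping difficulty is tracking weight twists consistently between $\Heta^{*,*}$ and the single-graded $\Heta^*$ when verifying the precise form of the $[-1]$-contribution in the squared-generator relation.
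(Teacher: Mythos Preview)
Your overall architecture is right---induction along the fiber sequences \eqref{fibSV} with the Leray spectral sequence of \cite{asok2018homotopy}---but two of the steps you treat as routine are in fact the substance of the paper's argument, and your proposed multiplicative argument does not go through.

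\textbf{The $E_2$-page is not automatically a tensor product.} You write that freeness of the fiber cohomology over $\Heta^*(S)$ makes the $E_2$-page a tensor product. This is false in general: the $E_2$-term is $^{\delta}A^p(B, H^q_{\delta} f_* \mathbf{W})$, and the MW-cycle module $H^q_{\delta} f_* \mathbf{W}$ is \emph{a priori} only locally simple. One must show that the transition automorphisms of the fibration act trivially on fiber cohomology, i.e.\ that the associated class in $\mathrm{H}_\tau(B, \mathrm{Aut}(\mathrm{H}^q(F,\mathbf{W})))$ vanishes. The paper carries this out by an explicit matrix computation (Sections~\ref{MWmod}--\ref{trivialMono}): the transition maps are right-multiplication by matrices $\af^1$-homotopic to $\mathbf{D}((-1)^{\sum k - \sum i}\det(\mathbf{A}_K)^{-1}\det(\mathbf{A}_I))$, and one exhibits a coboundary $\mathbf{h}_I = \mathbf{D}((-1)^{\sum i}\det(\mathbf{A}_I)^{-1})$. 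Without this, you have no control over the coefficient system.

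\textbf{Degeneration is by sparseness, not Leray--Hirsch.} You propose to build global lifts of the fiber generator, and you correctly flag that in the odd case there is no obvious map $V_k(\af^n)\to V_2(\af^{n-k+2})$; your suggested workarounds via Borel classes are speculative. The paper avoids this entirely (Proposition~\ref{SSconverge}): since $\Heta^*(K)$ is concentrated in degree $0$, the only possible nonzero differential is the transgression of the fiber generator, and by the inductive description of $\Heta^*(V_{k-1}(\af^n))$ (resp.\ $V_{k-2}$) that target degree is zero. This is a two-line sparseness check once the monodromy is handled.

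\textbf{Your square-zero argument is wrong.} In the $\eta$-inverted theory $\aBra{-1} = -1$ in $W(K)$ (since $h = 1 + \aBra{-1} = 0$), so graded commutativity gives only $(1 - (-1)^{p+q})g^2 = 0$, i.e.\ $2g^2 = 0$ when $p+q$ is odd and nothing otherwise. Over fields with $2 \neq 0$ in $W(K)$ this is vacuous, and for $\gamma_{n-k}$ with $n-k$ even the bidegree $(2(n-k)+1,\, n-k+1)$ has $p+q$ even, so you get no constraint at all. The paper instead argues (Section~\ref{ring}) that $g^2$, being of even single-graded degree, can only be a $\Heta^*(K)$-combination of the odd-degree generators $\alpha_{n-1}, \beta_{4j-1}$; the coefficients then lie in $\Heta^{p}(K)$ for $p \neq 0$, hence vanish by coconnectedness (Remark~\ref{HHvainish}). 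The $\gamma_0$ exception comes from Lemma~\ref{ringAn}, not from bidegree inspection.
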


We can also give the motivic decomposition of $\Meta(V_k(\af^n))$ in $\DMt(S)[\eta\inv]$ for any $k\leq n$. We note that in this category $\tbZ[\eta^{-1}]\cong \mathbf{W}_*$, the \MW cycle module $\mathbf{W}_*:= W[\eta,\eta\inv]$ where $\eta$ is of degree $-1$, i.e. $\mathbf{W}_i := \eta^{-i} \mathbf{W}_0$. Therefore, we have 
\[
\mathbf{HS}_{2k}= \mathbf{W}_0 \oplus \mathbf{W}_{2k}[2k-1],\ \mathbf{HS}_{2k+1}= \mathbf{W}_0 \oplus \mathbf{W}_{4k+1}[4k-1].
\]

\begin{theorem}
	\label{SVMain}
	In $\DMt(S)[\eta\inv]$, we have the following motivic decompositions
	\[\Meta(V_k(\af^{2i})) \cong \mathbf{HS}_{2i}\otimes \Meta(V_{k-1}(\af^{2i-1}))\] 
	and 
	\[\Meta(V_k(\af^{2i+1})) \cong \mathbf{HS}_{2i+1}\otimes \Meta(V_{k-2}(\af^{2i-1})) .\]
	Consequently, we obtain the following full decompositions by induction:
	\[\Meta(V_{2j}(\af^{2i})) \cong \mathbf{HS}_{2i} \otimes  \mathbf{HS}_{2i-1}\otimes \mathbf{HS}_{2i-3}\otimes \ldots \otimes \mathbf{HS}_{2i-1-2(j-2)} \otimes \Meta(\afnz{2i+1-2j}) \]
	\[\Meta(V_{2j+1}(\af^{2i})) \cong \mathbf{HS}_{2i} \otimes \mathbf{HS}_{2i-1}\otimes \mathbf{HS}_{2i-3}\otimes \ldots \otimes \mathbf{HS}_{2i-1-2(j-1)}  \]
	\[\Meta(V_{2j}(\af^{2i+1})) \cong \mathbf{HS}_{2i+1}\otimes \mathbf{HS}_{2i-1}\otimes \ldots \otimes \mathbf{HS}_{2i+1-2(j-1)}  \]
	\[\Meta(V_{2j+1}(\af^{2i+1})) \cong \mathbf{HS}_{2i+1}\otimes \mathbf{HS}_{2i-1}\otimes \ldots \otimes \mathbf{HS}_{2i+1-2(j-1)} \otimes \Meta(\afnz{2i+1-2j})  \]
\end{theorem}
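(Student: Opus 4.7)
The plan is to combine the cofiber sequences arising from the fiber sequences \eqref{fibSV} with the cohomological input of Theorem \ref{SVCohMain}, closely following the template of Theorem \ref{SpMain}. First I would handle the base case $\Meta(V_2(\af^{2i+1}))\cong \mathbf{HS}_{2i+1}$ in $\DMt(S)[\eta^{-1}]$: the class $\beta_{4i-1}\in \Heta^{4i-1}(V_2(\af^{2i+1}))$ provided by Theorem \ref{SVCohMain}, together with the canonical projection to the unit, defines a morphism $\phi:\Meta(V_2(\af^{2i+1}))\to \mathbf{HS}_{2i+1}$ in $\DMt(S)[\eta^{-1}]$; Theorem \ref{SVCohMain} shows that $\phi$ induces an isomorphism on $\Heta^{\ast,\ast}$, so by the (compact) generation of $\DMt(S)[\eta^{-1}]$ by shifts of the unit, $\phi$ is an isomorphism.

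Next I would establish the two base reductions. Applying \eqref{fibSV} with $l=1$ for $n=2i$ even and with $l=2$ for $n=2i+1$ odd (the latter relying on the motive of $V_2(\af^{2i+1})$ just obtained), matrix manipulations analogous to those of Lemma \ref{SpCof} (treated in Section \ref{SVCof}) produce cofiber sequences in $\mathcal{H}(S)$. Applying $\Meta$ and inverting $\eta$ yields distinguished triangles
\[
\Meta(V_{k-1}(\af^{2i-1})) \xrightarrow{i} \Meta(V_k(\af^{2i})) \to \Meta(V_{k-1}(\af^{2i-1}))\otimes (\mathbf{HS}_{2i}/\tbZ) \xrightarrow{+1}
\]
in the even case, and the analogous triangle with $\Meta(V_{k-2}(\af^{2i-1}))$ and $\mathbf{HS}_{2i+1}/\tbZ$ in the odd case. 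To split each triangle I would imitate the proof of Theorem \ref{SpMain}: it suffices to construct a section $s$ of the fiber inclusion $i$, and decomposing the source via the inductive hypothesis into Tate summands and projecting factor-wise reduces this to showing surjectivity of the restriction $i^\ast:\Heta^{\ast,\ast}(V_k(\af^n))\to \Heta^{\ast,\ast}(V_{k-l}(\af^{n-l}))$. Inspection of the explicit generators $G_1,G_2,G_3$ in Theorem \ref{SVCohMain} confirms this surjectivity: every generator of the target is the restriction of a corresponding generator on $V_k(\af^n)$, the only missing one being precisely the generator accounting for the new cofiber piece.

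Iterating these two base reductions, alternating the parity of the ambient affine space, then yields the four explicit decomposition formulas stated in the theorem. The main obstacle, in my view, is the base case $\Meta(V_2(\af^{2i+1}))\cong \mathbf{HS}_{2i+1}$: unlike the inductive step, the fiber inclusion $\afnz{2i}\hookrightarrow V_2(\af^{2i+1})$ fails to be cohomologically surjective (the class of $\afnz{2i}$ in degree $2i-1$ is not hit by $\Heta^{\ast,\ast}(V_2(\af^{2i+1}))$), so the Leray-style splitting method used everywhere else does not apply, and one must instead pass through the cohomology-to-motive argument outlined above. Once this base case is secured, the remainder of the proof is a clean iteration of the splitting argument from Theorem \ref{SpMain}.
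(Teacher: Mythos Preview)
Your overall inductive scheme---splitting cofiber sequences via cohomological surjectivity of the fiber inclusion, as in Theorem \ref{SpMain}---is exactly the paper's strategy, and you correctly isolate $V_2(\af^{2i+1})$ as the crux. But your proposed resolution of that base case has a real gap, and the same gap reappears in the odd inductive step.

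The appeal to ``(compact) generation of $\DMt(S)[\eta^{-1}]$ by shifts of the unit'' is unjustified: nothing in the paper establishes that Tate objects (co)generate $\DMt(S)[\eta^{-1}]$, and there is no general reason for $\Heta^{\ast,\ast}$ to be conservative on this category, so an isomorphism on $\Heta$ does not by itself force $\phi$ to be an isomorphism of motives. Worse, the input $\mrm{PH}(2,2i{+}1)$ you feed into this argument is itself not available independently: the paper derives it \emph{from} the motivic computation in Section~\ref{secV2} (see the remark after the corollaries there), and the Leray spectral sequence route of Proposition~\ref{SSconverge} is only run for $k>2$---for $k=2$, $n$ odd, the relevant spectral sequence has a nontrivial differential whose determination is precisely the computation you are trying to bypass.

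What the paper actually does, both for $k=2$ (Section~\ref{secV2}) and for general $k$ with $n$ odd (Section~\ref{SVCof}), is compute the connecting map of the $l=1$ cofiber sequence
\[
V_{k-1}(\af^{n-1})_+\to V_k(\af^n)_+\to \afnz{n}\wedge V_{k-1}(\af^{n-1})_+\xr{\beta}
\]
explicitly: one constructs a map $\Sigma d$ and shows by direct matrix manipulation that the composite $\tau\circ\Sigma d$ equals $\eta\wedge 1$. After inverting $\eta$ this exhibits $\tau$ as split and identifies the cofiber as $\afnz{n}\wedge\afnz{n-1}\wedge V_{k-2}(\af^{n-2})_+$. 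In particular, the odd-case triangle you write down does \emph{not} arise in $\mathcal H(S)$ from the $l=2$ fiber sequence via ``matrix manipulations analogous to Lemma~\ref{SpCof}''; its existence in $\Da(K)[\eta^{-1}]$ is the \emph{output} of this $\eta$-computation, not an input. The work you defer to routine manipulations is the heart of the proof.
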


It is important to note that the generators in Theorem \ref{SVCohMain} correspond to the decompositions in Theorem \ref{SVMain}.

\begin{corollary}
	In $\DMt(S)[\eta\inv]$, we have 
	\[\Meta(\GL_{2i})\cong \mathbf{HS}_{2i} \otimes  \mathbf{HS}_{2i-1}\otimes \mathbf{HS}_{2i-3}\otimes \ldots \otimes \mathbf{HS}_{3} \otimes \Meta(\Gm)\]
	\[\Meta(\GL_{2i+1}) \cong \mathbf{HS}_{2i+1}\otimes \mathbf{HS}_{2i-1}\otimes \ldots \otimes \mathbf{HS}_{3} \otimes \Meta(\Gm) \]
	and 
	\[\Meta(\SL_{2i})\cong \mathbf{HS}_{2i} \otimes  \mathbf{HS}_{2i-1}\otimes \mathbf{HS}_{2i-3}\otimes \ldots \otimes \mathbf{HS}_{3} \]
	\[\Meta(\SL_{2i+1}) \cong \mathbf{HS}_{2i+1}\otimes \mathbf{HS}_{2i-1}\otimes \ldots \otimes \mathbf{HS}_{3}. \]
\end{corollary}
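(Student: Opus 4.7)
The plan is to derive both cases from Theorem \ref{SVMain} by exhibiting $\GL_n$ and $\SL_n$ as motivically equivalent (in $\DMt(S)[\eta^{-1}]$) to appropriate Stiefel varieties.

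For the general linear group the identification is tautological: $\GL_n = V_n(\af^n)$, so one simply substitutes $k = n$ into Theorem \ref{SVMain}. In the even case $n = 2i$, I would apply the $V_{2j}(\af^{2i})$ formula with $j = i$: the last $\mathbf{HS}$-index is $2i-1-2(j-2) = 3$, and the trailing factor is $\Meta(\afnz{2i+1-2j}) = \Meta(\Gm)$. The odd case $n = 2i+1$ is analogous, using $V_{2j+1}(\af^{2i+1})$ with $j = i$. In both cases the resulting decomposition matches the formulas claimed for $\Meta(\GL_n)$ verbatim.

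For the special linear group, the plan is to identify $\SL_n$ with $V_{n-1}(\af^n)$ in $\DMt(S)$. Consider the morphism $p : \SL_n \to V_{n-1}(\af^n)$ that sends a matrix to its first $n-1$ columns. Its fibre over a linearly independent tuple $(v_1,\ldots,v_{n-1})$ is cut out in $\af^n$ by the single affine equation $\det(v_1,\ldots,v_{n-1},w) = 1$, so it is an affine hyperplane isomorphic to $\af^{n-1}$. Over any Zariski open $U \subset V_{n-1}(\af^n)$ on which one has a regular extension $\tilde w$ making the matrix $(v_1,\ldots,v_{n-1},\tilde w)$ invertible, rescaling $\tilde w$ by the non-vanishing function $\det(v_1,\ldots,v_{n-1},\tilde w)$ yields a section of $p|_U$; hence $p$ is a Zariski-locally trivial $\af^{n-1}$-bundle, and by $\af^1$-invariance $\Meta(p)$ is an isomorphism in $\DMt(S)$, a fortiori in $\DMt(S)[\eta^{-1}]$. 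Applying Theorem \ref{SVMain} to $V_{n-1}(\af^n)$ then gives the desired formula: for $n = 2i$ this is $V_{2j+1}(\af^{2i})$ with $j = i-1$, producing $\mathbf{HS}_{2i} \otimes \mathbf{HS}_{2i-1} \otimes \cdots \otimes \mathbf{HS}_3$; for $n = 2i+1$ it is $V_{2j}(\af^{2i+1})$ with $j = i$, producing $\mathbf{HS}_{2i+1} \otimes \mathbf{HS}_{2i-1} \otimes \cdots \otimes \mathbf{HS}_3$.

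The only point beyond routine re-indexing is the Zariski-local triviality of $p$, which I expect to be the main (mild) obstacle; once the $\af^{n-1}$-bundle structure yields $\Meta(\SL_n) \cong \Meta(V_{n-1}(\af^n))$, everything reduces to matching the two bookkeeping conventions in Theorem \ref{SVMain}. Note that one could alternatively use the splitting $\GL_n \cong \SL_n \times \Gm$ coming from the determinant, but then one would need to cancel the $\Meta(\Gm)$-factor from the $\GL_n$-decomposition, which is less clean than the direct route via $V_{n-1}(\af^n)$.
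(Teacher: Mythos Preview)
Your proposal is correct and follows essentially the same route as the paper: both identify $\GL_n = V_n(\af^n)$ and establish $\SL_n \simeq V_{n-1}(\af^n)$ in $\af^1$-homotopy, then read off the decompositions from Theorem \ref{SVMain}. The paper justifies the $\SL_n$ identification in one line via the homogeneous-space description $V_{n-1}(\af^n)\cong \GL_n/\GL_1$, while you give the equivalent explicit $\af^{n-1}$-bundle argument; both are valid.
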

\begin{proof}
	For the case of $\SL_n$, we just need to show that $\SL_n\cong V_{n-1}(\af^n)$ which follows from the fact that $V_{n-1}(\af^n)\cong \GL_n/\GL_1$.	
\end{proof}


Let us stress once again that our focus in this section is only on the additive structure; we will study the multiplicative structure in section \ref{ring}. 

Let $\mrm{PH}(k,n)$ (resp. $\mrm{P}(k,n)$) denote the property that Theorem \ref{SVCohMain} (resp. Theorem \ref{SVMain}) holds for $V_{k}(\af^n)$ (for the additive structure). Our strategy is to use induction to prove that $\mrm{PH}(k,n)$ holds, and deduce that $\mrm{P}(k,n)$ also holds. We notice that for $k=1$ everything is clear from the results on motivic spheres, and we proceed with the case $k=2$ in the next section.

\subsection[A cofiber sequence for V2]{A cofiber sequence for $V_2(\af^n)$.}
\label{secV2}
The points of $V_2(\af^n)$ can be expressed as
$\begin{bmatrix}
a_1 & \cdots & a_n\\
b_1 & \cdots & b_n
\end{bmatrix}$ 
(with the condition on the rank of the matrix). For such a matrix, we write $a_i*$ or $(a_i\cdots a_j)*$ if $(a_i, \cdots ,a_j)\in \afnz{j-i+1}$ and $a_i\circ $ or $(a_i\cdots a_j)\circ$ if $(a_i, \cdots ,a_j)\in \af^{j-i+1}$ without extra condition.

Next we consider the open subvariety $U\subset V_2(\af^n)$, which is defined by $(a_1, \cdots ,a_{n-1})\in \afnz{n-1}$, that is
$$\begin{bmatrix}
(a_1 & \cdots & a_{n-1})* & a_n\circ\\
b_1 & \cdots &\cdots & b_n
\end{bmatrix}\in U .$$
And let $ W \subset U$ be a closed subvariety defined by $a_n=1, b_n=0$. We now show that $W$ is a retract of $U$ in $\mathcal{H}(S)$

\begin{lemma}\label{shrinkOpen}
	There is a retraction $r: U \to W \cong \afnz{n-1}\times \afnz{n-1}$ in $\mathcal{H}(S)$ given by a composition of elementary transformations. In particular, $r$ induces an isomorphism $U\cong \afnz{n-1}\times \afnz{n-1}$ up to $\af^1$-homotopy.
\end{lemma}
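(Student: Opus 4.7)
The plan is to construct $r$ as a composition of elementary row and column operations that bring any $M\in U$ into the canonical form required by $W$, with each elementary step $\af^1$-homotopic to the identity. The guiding observation is that the defining condition of $U$ — namely $\mathbf{a}=(a_1,\ldots,a_{n-1})\in\afnz{n-1}$ — only involves the first $n-1$ entries of the top row; to keep the homotopy inside $U$ I therefore restrict to two types of elementary operation that leave these entries fixed: column operations $C_n\mapsto C_n+c\,C_j$ for $j<n$, and the row operation $R_2\mapsto R_2+c\,R_1$. Each such operation is $\af^1$-homotopic to the identity via the standard rescaling $c\rightsquigarrow tc$, $t\in\af^1$, with the entire path staying inside $U$. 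The identification $W\cong\afnz{n-1}\times\afnz{n-1}$ is purely formal: imposing $a_n=1,\ b_n=0$ on a rank-$2$ matrix in $U$ forces $(b_1,\ldots,b_{n-1})\neq 0$ (else the bottom row would vanish), so $W$ is parametrised freely by its two truncated rows, each in $\afnz{n-1}$.

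The construction of $r$ itself proceeds Zariski-locally. Cover $U$ by the opens $U_i=\{a_i\neq 0\}$ for $1\leq i\leq n-1$, which indeed cover $U$ since $\mathbf{a}\in\afnz{n-1}$. On each $U_i$ define $r_i\colon U_i\to W$ as the composition of the column operation $C_n\mapsto C_n+\frac{1-a_n}{a_i}C_i$ (which sets the $(1,n)$-entry to $1$ and leaves the first $n-1$ columns unchanged), followed by the row operation $R_2\mapsto R_2-\tilde b_n\,R_1$, where $\tilde b_n$ is the updated $(2,n)$-entry (setting it to $0$). Since elementary operations preserve rank, $r_i(M)$ still has rank $2$ with last column $(1,0)^T$, which forces the bottom row's first $n-1$ entries to form a nonzero tuple; hence $r_i(M)\in W$. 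When $M$ already lies in $W$ both scalar coefficients vanish, so $r_i|_{W\cap U_i}=\mrm{id}$.

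The final step is to glue the $r_i$'s. On an overlap $U_i\cap U_j$ the $\af^1$-family of column operations $C_n\mapsto C_n+(1-t)\tfrac{1-a_n}{a_i}C_i+t\tfrac{1-a_n}{a_j}C_j$, followed by the induced row operation setting the new $(2,n)$-entry to $0$, provides an $\af^1$-homotopy between $r_i$ (at $t=0$) and $r_j$ (at $t=1$) through maps of the allowed form, hence within $U$ (note that the new $(1,n)$-entry remains equal to $1$ throughout, since $(1-t)+t=1$). Since Zariski covers are sent to equivalences in $\mathcal{H}(S)$, the collection $\{r_i\}$ together with these homotopies on overlaps assembles into a morphism $r\colon U\to W$ in $\mathcal{H}(S)$. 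The local homotopies $\mrm{id}_{U_i}\simeq i_W\circ r_i$ — obtained by rescaling each operation's coefficient by $t\in\af^1$ — glue similarly to a global $\af^1$-homotopy $\mrm{id}_U\simeq i_W\circ r$. Together with $r\circ i_W=\mrm{id}_W$, this exhibits $i_W$ and $r$ as mutual inverses in $\mathcal{H}(S)$ and yields the claimed $\af^1$-equivalence $U\simeq\afnz{n-1}\times\afnz{n-1}$.

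The hard part will be the coherence of this Zariski gluing: verifying that the interpolating homotopies on pairwise overlaps satisfy the required compatibilities on triple and higher overlaps, and that every intermediate matrix in every homotopy genuinely lies in $U$. The restriction to the two allowed types of elementary operation is precisely what makes the second point automatic, while the first can be handled by barycentric interpolation of the coefficients $\tfrac{1-a_n}{a_i}C_i$ on $k$-fold overlaps $U_{i_1}\cap\cdots\cap U_{i_k}$, giving a Čech cocycle on the nerve of the cover $\{U_i\}$ whose total space realises the desired morphism in $\mathcal{H}(S)$.
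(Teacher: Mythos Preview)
Your argument is correct, but the paper takes a cleaner route that avoids the Čech gluing entirely. Rather than working locally on the opens $U_i=\{a_i\neq 0\}$ and interpolating, the paper pulls $U$ back along the affine bundle $Q_{2n-3}\to\afnz{n-1}$, where $Q_{2n-3}=\{(\mathbf{a},\mathbf{x}):\mathbf{a}\mathbf{x}^T=1\}$. On the resulting scheme $U'$ (still $\af^1$-equivalent to $U$) one now has a \emph{global} vector $\mathbf{x}$ with $\mathbf{a}\mathbf{x}^T=1$, so the single globally defined column operation $C_n\mapsto C_n-(a_n-1)\sum_j x_jC_j$ sets the $(1,n)$-entry to $1$ everywhere at once; one row operation then finishes the retraction, and Lemma~\ref{Mretract} applies directly. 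Your barycentric interpolation on the Čech nerve is, in effect, a simplicial reconstruction of exactly this tautological section $\mathbf{x}$; the paper's Jouanolou-type trick packages it into one affine-bundle replacement and sidesteps all the higher-overlap coherence.

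The real payoff of the paper's method appears immediately after the lemma: the retraction is given by the closed formula $(\mathbf{x},\mathbf{a},a_n,\mathbf{b})\mapsto(\mathbf{x},\mathbf{a},\mathbf{b}\,\mathbf{M}^T)$ with $\mathbf{M}(\mathbf{x},\mathbf{a},a_n)=\mathbf{I}+(a_n-1)\mathbf{a}^T\mathbf{x}$, and this matrix $\mathbf{M}$ is then analysed explicitly (determinant, eigenvectors) to identify the connecting map $\beta$ of the cofiber sequence for $V_2(\af^n)$. Your locally glued $r$ would not yield such a global formula, so while it proves the lemma, it would make the subsequent computation of the Gysin map substantially harder.
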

\begin{proof}
	It is convenient to replace $\afnz{n-1}$ with $Q_{2n-3}$, whose points consist of pairs $(\mathbf{a},\mathbf{x})$ such that $\mathbf{a} \mathbf{x}^T=1$, while the projection $Q_{2n-3}\to \afnz{n-1}$ is given by $(\mathbf{a},\mathbf{x})\mapsto \mathbf{a}$. Note that the projection is an $\af^1$-equivalence, since it is locally trivial with affine fibers. The pull-back $U'$ of $Q_{2n-3}$ along the projection $U\to \afnz{n-1}$ is also $\af^1$-weak equivalent to $U$ and we may then consider the elements in $U'$ as pairs $(\mathbf{x},\begin{bmatrix}
		\mathbf{a}*  & a_n\circ\\
		  \mathbf{b} & b_n
	\end{bmatrix})$ with $\mathbf{a} \mathbf{x}^T=1$.

Using the following elementary transformations (which are $\af^1$-weak equivalences), we can change $a_n$ to $1$: $$(\mathbf{x},\begin{bmatrix}
		\mathbf{a}*  & a_n\circ\\
		  \mathbf{b} & b_n
	\end{bmatrix})
	\xr{\times\begin{bmatrix}
		\mathbf{I}& -(a_n-1)\mathbf{x}^T\\
		0&1
	\end{bmatrix}}
	(\mathbf{x},\begin{bmatrix}
		\mathbf{a}*  & 1\\
		  \mathbf{b} & b_n-(a_n-1)\mathbf{b} \mathbf{x}^T
	\end{bmatrix}).$$
Now, we may consider the following transformation $$(\mathbf{x},\begin{bmatrix}
		\mathbf{a}*  & 1\\
		  \mathbf{b} & b_n-(a_n-1)\mathbf{b} \mathbf{x}^T
	\end{bmatrix})
	\xr{\begin{bmatrix}
		1& 0\\
		-(b_n-(a_n-1)\mathbf{b} \mathbf{x}^T) &1
	\end{bmatrix}\times}
	(\mathbf{x},\begin{bmatrix}
		\mathbf{a}* & 1\\
		 \mathbf{b}-(b_n-(a_n-1)\mathbf{b} \mathbf{x}^T)\mathbf{a} & 0
	\end{bmatrix}).$$
Note that indeed $\mathbf{b}-(b_n-(a_n-1)\mathbf{b} \mathbf{x}^T)\mathbf{a}\in \afnz{n-1}$ as the matrix is of rank $2$. Finally, we conclude by applying Lemma \ref{Mretract}.

\end{proof}
The elements in the complement of $U$ are simply given by $\begin{bmatrix}
	& 0 & & a_n*\\
	(b_1 & \cdots & b_{n-1})* & b_n\circ
	\end{bmatrix}$. As a result, the complement is isomorphic to $\Gm \times \afnz{n-1}$ up to homotopy, and its normal bundle is trivial, with an obvious choice of trivialization. Thus, we obtain the following cofiber sequence: 
\begin{equation} \label{eq:cofV2}
	(\afnz{n-1}\times \afnz{n-1})_+\to V_2(\af^n)_+\to (\af^{n-1}/\afnz{n-1})\wedge (\Gm \times \afnz{n-1})_+ \xr{\alpha }
\end{equation}

We now consider the following diagram
\[\begin{tikzcd}
	{(\afnz{n-1}\times \afnz{n-1})_+} & {(\af^{n-1}\times \afnz{n-1})_+} & {(\af^{n-1}/\afnz{n-1})\wedge \afnz{n-1}_+} \\
	{(\afnz{n-1}\times \afnz{n-1})_+} & {V_2(\af^n)_+} & {(\af^{n-1}/\afnz{n-1})\wedge (\Gm \times \afnz{n-1})_+} \\
	{(\af^{n-1}\times \afnz{n-1})_+} & {V_2(\af^n)_+} & {C}
	\arrow[Rightarrow, no head, from=1-1, to=2-1]
	\arrow[from=2-1, to=3-1]
	\arrow[from=3-1, to=3-2]
	\arrow[from=2-1, to=2-2]
	\arrow[from=1-1, to=1-2]
	\arrow[from=1-2, to=1-3]
	\arrow[from=2-2, to=2-3]
	\arrow[Rightarrow, no head, from=2-2, to=3-2]
	\arrow[from=3-2, to=3-3]
	\arrow[dashed, from=2-3, to=3-3]
	\arrow[dashed, from=1-3, to=2-3]
	\arrow[from=1-2, to=2-2]
\end{tikzcd}\]
in which the top cofiber sequence is given by the obvious morphism 
\[
\afnz{n-1}\times \afnz{n-1}\to \af^{n-1}\times \afnz{n-1}.
\] 
and the bottom cofiber sequence is given by the morphism
\[ 
  \af^{n-1}\times \afnz{n-1} \to V_2{\af^n}, (\mathbf{a}\circ, \mathbf{b}*) \mapsto  \begin{bmatrix}
    \mathbf{a}\circ & 1 \\ 
    \mathbf{b}* & 0
  \end{bmatrix}
\]
The octahedron axiom shows that the right-hand column is a cofiber sequence, in which the top vertical map is induced by the map $\afnz{n-1}\to\Gm \times \afnz{n-1}$ given by $\mathbf{a}\mapsto  (1,\mathbf{a})$. 
Since $(\af^{n-1}/\afnz{n-1})\wedge \Gm\cong \afnz{n}$, we obtain that $C\cong  \afnz{n} \wedge \afnz{n-1}_+$ and the bottom cofiber sequence becomes:

\[\afnz{n-1}_+ \to {V_2(\af^n)}_+ \to \afnz{n} \wedge \afnz{n-1}_+ \xr{\beta }\]

Our goal is to comprehend the connecting map $\beta$. To achieve our goal, we go back to the cofiber sequence (\ref{eq:cofV2}), which shows that $\beta$ originates from the relevant direct summand of $\alpha$. To make the latter explicit, we refer to the diagram below:

{\small\[ 
	\begin{tikzcd}[column sep=small]
	{(\afnz{n-1}\times \Gm\times \afnz{n-1})_+} & {(\af^{n-1}\times \Gm\times \afnz{n-1})_+} & {(\af^{n-1}/\afnz{n-1})\wedge (\Gm \times \afnz{n-1})_+} & { } \\
	U_+ & {V_2(\af^n)_+} & {(\af^{n-1}/\afnz{n-1})\wedge (\Gm \times \afnz{n-1})_+} & { } \\
	{(\afnz{n-1}\times \afnz{n-1})_+}
	\arrow[from=3-1, to=2-2]
	\arrow[from=2-2, to=2-3]
	\arrow["\alpha", from=2-3, to=2-4]
	\arrow["j_2", from=1-2, to=2-2]
	\arrow[Rightarrow, no head, from=1-3, to=2-3]
	\arrow[from=1-1, to=1-2]
	\arrow[from=1-2, to=1-3]
	\arrow[from=2-1, to=2-2]
	\arrow["r","\cong"', from=2-1, to=3-1]
	\arrow["j_1", from=1-1, to=2-1]
	\arrow["i", from=1-3, to=1-4]
	\arrow["\gamma_+"', curve={height=18pt}, from=1-1, to=3-1]
\end{tikzcd}\]}
in which the morphisms $j_1$ and $j_2$ are defined by the formulas
$$ j_1:(\mathbf{a},a_n,\mathbf{b})\mapsto 
\begin{bmatrix} 
	 (a_1 & \cdots & a_{n-1})* & a_n*\\ 		
	(b_1 & \cdots &b_{n-1})* & 0 	
\end{bmatrix},\phantom{i}
j_2: (\mathbf{a},a_n,\mathbf{b})\mapsto 
\begin{bmatrix} 
	(a_1 & \cdots & a_{n-1})\circ & a_n*\\ 		
	(b_1 & \cdots &b_{n-1})* & 0 	\end{bmatrix}.$$
Therefore, we have $\alpha= \Sigma \gamma_+ \circ i$, where $i$ denotes the canonical Gysin map induced by the connecting map $(\af^{n-1}/\afnz{n-1})\to \Sigma \afnz{n-1}$ and $\gamma= r\circ j_1$. Indeed, we have the following lemma.
\begin{lemma}
Let 
\[
\mathcal{Z}_1\xr{f} \mathcal{Z}_2\xr{g} \mathcal{Z}_3\xr{i}\Sigma\mathcal{Z}_1
\]
be a cofiber sequence, and let $\mathcal X$ be a space. Then, the following sequence 
\[
(\mathcal{Z}_1\times\mathcal X)_+ \xr{f\times 1} (\mathcal{Z}_2\times\mathcal X)_+\xr{p_{\mathcal{Z}_3}(g\times 1)} \mathcal{Z}_3\wedge \mathcal X_+\xr{i\wedge 1}\Sigma\mathcal{Z}_1\wedge \mathcal X_+,
\]
in which $p_{\mathcal{Z}_3}:(\mathcal{Z}_3\times \mathcal X)_+\to \mathcal{Z}_3\wedge \mathcal X_+$ is the canonical projection, is a cofiber sequence.
\end{lemma}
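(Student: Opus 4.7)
The plan is to deduce the statement from two elementary facts: that $(-)\wedge \mathcal X_+$ preserves cofiber sequences in $\mathcal H_*(S)$, being a left adjoint to the internal mapping object out of $\mathcal X_+$; and the natural isomorphism of pointed spaces $\mathcal Z_+\wedge \mathcal X_+ \cong (\mathcal Z\times \mathcal X)_+$ for unpointed $\mathcal Z, \mathcal X$, which simply re-expresses that both sides are $\mathcal Z\times\mathcal X$ with a single disjoint basepoint.

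First I would reinterpret the hypothesized cofiber sequence inside $\mathcal H_*(S)$. Since $\mathcal Z_1, \mathcal Z_2$ are treated as unpointed while $\mathcal Z_3$ is canonically pointed as the cofiber, the corresponding pointed cofiber sequence is
\[
\mathcal Z_{1+} \xr{f_+} \mathcal Z_{2+} \xr{g_+} \mathcal Z_3 \xr{i} \Sigma\mathcal Z_1,
\]
because collapsing the image of $\mathcal Z_{1+}$ inside $\mathcal Z_{2+}$ identifies the disjoint basepoint with the image of $\mathcal Z_1$, recovering $\mathcal Z_2/\mathcal Z_1 = \mathcal Z_3$.

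Then I apply $(-)\wedge \mathcal X_+$. Preservation of cofiber sequences yields
\[
\mathcal Z_{1+}\wedge \mathcal X_+ \to \mathcal Z_{2+}\wedge \mathcal X_+ \to \mathcal Z_3\wedge \mathcal X_+ \to \Sigma\mathcal Z_1\wedge \mathcal X_+,
\]
and the natural identification $\mathcal Z_{i+}\wedge \mathcal X_+ \cong (\mathcal Z_i\times \mathcal X)_+$ rewrites the first two terms in the form required by the statement.

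Finally, a brief diagram chase verifies that, under these identifications, $(g_+)\wedge \mathrm{id}$ becomes $p_{\mathcal Z_3}\circ (g\times 1)$: both maps send $(z,x)\in \mathcal Z_2\times \mathcal X$ to the class of $(g(z),x)$ in $\mathcal Z_3\wedge \mathcal X_+$, and send the disjoint basepoint to the canonical basepoint. I do not anticipate any genuine obstacle; the content is abstract bookkeeping about pointed versus unpointed spaces. The only subtlety is the asymmetry of the resulting cofiber sequence (disjoint basepoints on the first two terms, smash product on the others), reflecting that $\mathcal Z_3$ is already canonically pointed while $\mathcal Z_1, \mathcal Z_2$ are not.
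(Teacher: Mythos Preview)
Your argument is correct and actually more direct than the paper's. The paper does not pass through the sequence $\mathcal Z_{1+}\to\mathcal Z_{2+}\to\mathcal Z_3$; instead it treats the $\mathcal Z_i$ as pointed from the start, applies $(-)\wedge\mathcal X_+$ to obtain $\mathcal Z_1\wedge\mathcal X_+\to\mathcal Z_2\wedge\mathcal X_+\to\mathcal Z_3\wedge\mathcal X_+$, and then relates each $\mathcal Z_i\wedge\mathcal X_+$ to $(\mathcal Z_i\times\mathcal X)_+$ via the auxiliary cofiber sequences $\mathcal X_+\to(\mathcal Z_i\times\mathcal X)_+\to\mathcal Z_i\wedge\mathcal X_+$, assembling everything into a $3\times 4$ grid and invoking the octahedron/$3\times 3$ lemma. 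Your route avoids this grid entirely by observing up front that the cofiber of $f_+$ is already $\mathcal Z_3$; after smashing with $\mathcal X_+$ the identification $(\mathcal Z_i)_+\wedge\mathcal X_+\cong(\mathcal Z_i\times\mathcal X)_+$ finishes the job. The paper's approach makes the commutativity $(g\wedge 1)p_{\mathcal Z_2}=p_{\mathcal Z_3}(g\times 1)$ appear as a square in the diagram, whereas you verify it by hand; both are equally easy.

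One small point to tighten: in your displayed sequence $\mathcal Z_{1+}\to\mathcal Z_{2+}\to\mathcal Z_3\to\Sigma\mathcal Z_1$, the honest connecting map lands in $\Sigma(\mathcal Z_{1+})$, and $i:\mathcal Z_3\to\Sigma\mathcal Z_1$ is only obtained after composing with the projection $\Sigma(\mathcal Z_{1+})\to\Sigma\mathcal Z_1$. This is exactly parallel to the looseness in the lemma's own statement (whose fourth term ought to be $\Sigma(\mathcal Z_1\times\mathcal X)_+$ rather than $\Sigma\mathcal Z_1\wedge\mathcal X_+$), so it causes no trouble; once you smash with $\mathcal X_+$ and compose with the corresponding projection you indeed get $i\wedge 1$.
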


\begin{proof}
We have a cofiber sequence
\[
\mathcal{Z}_1\wedge\mathcal X_+ \xr{f\wedge 1} \mathcal{Z}_2\wedge\mathcal X_+\xr{g\wedge 1} \mathcal{Z}_3\wedge \mathcal X_+\xr{i\wedge 1}\Sigma\mathcal{Z}_1\wedge \mathcal X_+
\]
and notice that $(\mathcal{Z}_i \times \mathcal{X})_+ \cong ( \mathcal{Z}_i )_+ \wedge \mathcal{X}_+ $ we have the cofiber sequences
\[
  \mathcal{X}_+ \to (\mathcal{Z}_i \times \mathcal{X})_+ \xr{p_{\mathcal{Z}_i}} \mathcal{Z}_i \wedge \mathcal{X}_+
\]
Then there is a commutative diagram in which the rows and columns are cofiber sequences 
\[\begin{tikzcd}
	{\mathcal X_+} & {\mathcal X_+} & \star & {\Sigma( \mathcal X_+)} \\
	{(\mathcal{Z}_1\times\mathcal X)_+} & {(\mathcal{Z}_2\times\mathcal X)_+} & C & {\Sigma(\mathcal{Z}_1\times\mathcal X)_+} \\
	{\mathcal{Z}_1\wedge\mathcal X_+ } & {\mathcal{Z}_2\wedge\mathcal X_+} & {\mathcal{Z}_3\wedge \mathcal X_+} & {\Sigma\mathcal{Z}_1\wedge \mathcal X_+}
	\arrow[from=1-1, to=2-1]
	\arrow[Rightarrow, no head, from=1-1, to=1-2]
	\arrow[from=1-2, to=1-3]
	\arrow[from=1-2, to=2-2]
	\arrow["{f\times 1}", from=2-1, to=2-2]
	\arrow[from=2-2, to=2-3]
	\arrow[from=1-3, to=2-3]
	\arrow["{p_{\mathcal{Z}_1}}"', from=2-1, to=3-1]
	\arrow["{f\wedge 1}", from=3-1, to=3-2]
	\arrow["{p_{\mathcal{Z}_2}}"', from=2-2, to=3-2]
	\arrow["{g\wedge 1}", from=3-2, to=3-3]
	\arrow[dashed, from=2-3, to=3-3]
	\arrow[from=1-3, to=1-4]
	\arrow[from=2-3, to=2-4]
	\arrow["{i\wedge 1}", from=3-3, to=3-4]
	\arrow[from=1-4, to=2-4]
	\arrow["{\Sigma p_{\mathcal{Z}_1}}"', from=2-4, to=3-4]
\end{tikzcd}\]
The claim now follows from the fact that the dotted arrow is an equivalence, and from the equality $(g\wedge 1)p_{\mathcal{Z}_2}=p_{\mathcal{Z}_3}(g\times 1)$.
\end{proof}

We now provide the explicit expression of $\gamma$ after substituting $\afnz{n-1}$ with $Q_{2n-3}$.
\begin{lemma}
	The morphism $\gamma= r\circ j_1:Q_{2n-3}\times \Gm\times \afnz{n-1}\to Q_{2n-3}\times \afnz{n-1} $
	is defined by 
	\[ (\mathbf{x},\mathbf{a},a_n,\mathbf{b})\mapsto
	(\mathbf{x},\mathbf{a},\mathbf{b}\mathbf{M}(\mathbf{x},\mathbf{a},a_n)^T)
		\]
		where $\mathbf{M}(\mathbf{x},\mathbf{a},a_n)=\mathbf{I}+(a_n-1)\mathbf{a}^T\mathbf{x}$.
		Moreover, $\mathbf{M}(\mathbf{x},\mathbf{a},a_n)$ is invertible, and $\mathbf{a}$ is an eigenvector of the eigenvalue $a_n$. 
\end{lemma}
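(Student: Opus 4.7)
The plan is to simply execute the composition $r\circ j_1$ using the explicit formulas already made available in the proof of Lemma \ref{shrinkOpen}, and then check algebraically the two properties of $\mathbf{M}$. By construction, $j_1$ sends $(\mathbf{x},\mathbf{a},a_n,\mathbf{b})$ (with $\mathbf{a}\mathbf{x}^T=1$) to the pair $(\mathbf{x},\begin{bmatrix}\mathbf{a} & a_n\\ \mathbf{b} & 0\end{bmatrix})$, so the only thing to do is to push this matrix through the two elementary operations defining $r$, specialized to $b_n=0$.

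Concretely, the first step right-multiplies by $\begin{bmatrix}\mathbf{I} & -(a_n-1)\mathbf{x}^T\\ 0 & 1\end{bmatrix}$ and, using $\mathbf{a}\mathbf{x}^T=1$, converts the top row to $(\mathbf{a},1)$ while turning the entry in position $(2,n)$ into $-(a_n-1)\mathbf{b}\mathbf{x}^T$. The second step left-multiplies by $\begin{bmatrix}1 & 0\\ (a_n-1)\mathbf{b}\mathbf{x}^T & 1\end{bmatrix}$ to kill that entry, and the new bottom-left row becomes
\[
\mathbf{b}+(a_n-1)\mathbf{b}\mathbf{x}^T\mathbf{a}=\mathbf{b}\bigl(\mathbf{I}+(a_n-1)\mathbf{x}^T\mathbf{a}\bigr)=\mathbf{b}\,\mathbf{M}(\mathbf{x},\mathbf{a},a_n)^T,
\]
giving exactly the formula announced. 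Since neither step changes $\mathbf{x}$ or $\mathbf{a}$, the full image is $(\mathbf{x},\mathbf{a},\mathbf{b}\mathbf{M}^T)$.

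It then remains to verify the last two assertions. For the eigenvector statement, I compute
\[
\mathbf{M}\mathbf{a}^T=\mathbf{a}^T+(a_n-1)\mathbf{a}^T(\mathbf{x}\mathbf{a}^T)=\mathbf{a}^T+(a_n-1)\mathbf{a}^T=a_n\mathbf{a}^T,
\]
where the key input is the scalar identity $\mathbf{x}\mathbf{a}^T=\mathbf{a}\mathbf{x}^T=1$ encoded in the definition of $Q_{2n-3}$. For invertibility, $\mathbf{M}$ is a rank-one perturbation of the identity, so by the matrix determinant lemma $\det\mathbf{M}=1+(a_n-1)\mathbf{x}\mathbf{a}^T=a_n$, which is a unit since $a_n\in\Gm$.

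There is no real obstacle here; the content of the lemma is essentially bookkeeping. The only small subtlety is making sure the elementary transformations from the proof of Lemma \ref{shrinkOpen} are correctly specialized to the case $b_n=0$ (which is where $j_1$ lands), so that the two corrections combine into the single symmetric formula $\mathbf{M}=\mathbf{I}+(a_n-1)\mathbf{a}^T\mathbf{x}$; after that, the eigenvector identity and the determinant computation are one-line consequences of $\mathbf{a}\mathbf{x}^T=1$.
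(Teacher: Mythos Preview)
Your proof is correct and follows essentially the same approach as the paper: you unfold the elementary operations of Lemma \ref{shrinkOpen} with $b_n=0$ to obtain the formula for $\gamma$, then verify the eigenvector and determinant identities directly from $\mathbf{a}\mathbf{x}^T=1$. The only cosmetic difference is that the paper establishes $\det\mathbf{M}=a_n$ via an explicit block-matrix manipulation rather than by citing the matrix determinant lemma, but this is the same identity.
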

\begin{proof}
	This first part follows immediately from the definition of the retraction $r$ in Lemma \ref{shrinkOpen}.
	Now, let us consider the matrix $\begin{bmatrix}
		1 & -\mathbf{x}\\
		(a_n-1)\mathbf{a}^T& \mathbf{I}
	\end{bmatrix}$. Applying the elementary transformations given by the matrices 
  $\begin{bmatrix}
		1 & \mathbf{x}\\
		\mathbf{0} & \mathbf{I}
	\end{bmatrix}$ 
  and 
  $\begin{bmatrix}
		1 & \mathbf{0}\\
		-(a_n-1)\mathbf{a}^T& \mathbf{I}
	\end{bmatrix}$, we have 
  \[ 
  \begin{bmatrix}
		1 & \mathbf{x}\\
		\mathbf{0} & \mathbf{I}
	\end{bmatrix}
  \begin{bmatrix}
		1 & -\mathbf{x}\\
		(a_n-1)\mathbf{a}^T& \mathbf{I}
	\end{bmatrix}
  \begin{bmatrix}
		1 & \mathbf{0}\\
		-(a_n-1)\mathbf{a}^T& \mathbf{I}
	\end{bmatrix}
  =
  \begin{bmatrix}
		1+(a_n-1)\mathbf{a}\mathbf{x}^T=a_n & \mathbf{0}\\
		\mathbf{0}& \mathbf{I}
	\end{bmatrix}
  \]
  \[ 
  \begin{bmatrix}
		1 & \mathbf{0}\\
		-(a_n-1)\mathbf{a}^T& \mathbf{I}
	\end{bmatrix}
  \begin{bmatrix}
		1 & -\mathbf{x}\\
		(a_n-1)\mathbf{a}^T& \mathbf{I}
	\end{bmatrix}
  \begin{bmatrix}
		1 & \mathbf{x}\\
		\mathbf{0} & \mathbf{I}
	\end{bmatrix}
  =
  \begin{bmatrix}
		1 & \mathbf{0}\\
		\mathbf{0}& \mathbf{I}+(a_n-1)\mathbf{a}^T\mathbf{x}
	\end{bmatrix} .
  \]
	 Thus $\det(\mathbf{I}+(a_n-1)\mathbf{a}^T\mathbf{x})=a_n$, which is invertible. Finally, we have $(\mathbf{I}+(a_n-1)\mathbf{a}^T\mathbf{x})\mathbf{a}^T=\mathbf{a}^T+(a_n-1)\mathbf{a}^T=a_n\mathbf{a}^T$.
\end{proof}
\begin{corollary}
	In $\Da(K)$, the Gysin map $\beta$ can be expressed as 
\[
	\beta=\Sigma\gamma'_+ \circ i',
\]
where $i':(\af^{n-1}/\afnz{n-1})\wedge \Gm \wedge \afnz{n-1}_+ \to \Sigma \afnz{n-1} \wedge (\Gm)_+ \wedge \afnz{n-1}_+ $ is the canonical split injection, and 
\[
\gamma':Q_{2n-3}\times \Gm\times \afnz{n-1}\to \afnz{n-1}
\] 
is defined by $(\mathbf{x},\mathbf{a},a_n,\mathbf{b})\mapsto
	\mathbf{b}\mathbf{M}(\mathbf{x},\mathbf{a},a_n)^T$.
\end{corollary}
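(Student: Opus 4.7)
The plan is to derive the formula by combining the explicit identity $\alpha=\Sigma\gamma_+\circ i$ established above with the octahedron axiom applied to the factorization
\[
(\afnz{n-1}\times\afnz{n-1})_+ \xr{f} (\af^{n-1}\times\afnz{n-1})_+ \xr{g} V_2(\af^n)_+,
\]
in which $f$ is the open embedding and $g$ is the closed embedding underlying the bottom cofiber sequence. Writing $C_f, C_{gf}, C_g$ for the three cofibers, we have $C_f=(\af^{n-1}/\afnz{n-1})\wedge\afnz{n-1}_+$, $C_{gf}=(\af^{n-1}/\afnz{n-1})\wedge(\Gm\times\afnz{n-1})_+$, and $C_g\cong\afnz{n}\wedge\afnz{n-1}_+$. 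The octahedron yields a canonical map $p\colon C_{gf}\to C_g$ together with the compatibility $\beta\circ p=\Sigma f\circ\alpha$, obtained by viewing the two bottom cofiber sequences as a morphism of triangles induced by $f$ on the left and $\Sigma f$ on the right.

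Next, because the inclusion $\afnz{n-1}\hookrightarrow\Gm\times\afnz{n-1}$, $\mathbf{a}\mapsto(1,\mathbf{a})$, is split by the second projection, the octahedral triangle $C_f\to C_{gf}\to C_g$ splits, and $p$ admits a canonical section $s$ whose image is the summand $(\af^{n-1}/\afnz{n-1})\wedge\Gm\wedge\afnz{n-1}_+\cong\afnz{n}\wedge\afnz{n-1}_+$. Hence $\beta=\Sigma f\circ\alpha\circ s=\Sigma f\circ\Sigma\gamma_+\circ(i\circ s)$. In $\Da(K)$, the contractibility $\af^{n-1}\simeq\ast$ identifies $\Sigma(\af^{n-1}\times\afnz{n-1})_+$ with $\Sigma\afnz{n-1}_+$ and turns $\Sigma f$ into the collapse map killing the first $\afnz{n-1}$-factor. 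Combined with the decomposition $\gamma=(\pi_{Q_{2n-3}},\gamma')$ provided by the preceding lemma, one reads off $\Sigma f\circ\Sigma\gamma_+=\Sigma\gamma'_+$. On the source side, using $\Gm_+\simeq\Gm\vee S^0$, the composition $i\circ s$ recovers the canonical split injection $i'$ of the statement. Assembling these identifications gives $\beta=\Sigma\gamma'_+\circ i'$.

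The primary obstacle is not conceptual but bookkeeping: one must carefully reconcile several reduced-versus-unreduced smash decompositions ($\Gm_+\simeq\Gm\vee S^0$ and $\afnz{n-1}_+\simeq\afnz{n-1}\vee S^0$), together with the $\af^1$-weak equivalence $Q_{2n-3}\simeq\afnz{n-1}$ used in the target of $\gamma$, in order to match the section $s$ produced by the octahedron with the specific map $i'$ of the statement and to confirm that the collapse of $\Sigma f$ on the target of $\Sigma\gamma_+$ projects precisely onto the $\gamma'$-component. Once these identifications are made explicit, the corollary follows formally from the octahedron axiom and the explicit formula for $\gamma$.
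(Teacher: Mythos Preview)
Your proposal is correct and follows essentially the same route as the paper. The paper presents the corollary as an immediate consequence of the octahedral diagram already drawn (the one producing the bottom cofiber sequence with cofiber $C\cong\afnz{n}\wedge\afnz{n-1}_+$) together with the explicit formula for $\gamma$, simply asserting that ``$\beta$ originates from the relevant direct summand of $\alpha$''; you have unpacked precisely this, writing out the compatibility $\beta\circ p=\Sigma f\circ\alpha$ from the morphism of triangles, using the splitting of $\afnz{n-1}\hookrightarrow\Gm\times\afnz{n-1}$ to obtain the section $s$, and then reading off $\gamma'$ as the second component of $\gamma$ after collapsing the contractible $\af^{n-1}$-factor.
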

\begin{remark}
	The matrix $ \mathbf{M}$ actually almost coincides with the abstract map $\Gm\wedge \mathbb{P}^{n-1}_+\to \GL_n$ defined in \cite[\S 5]{williams2012motivic}.
	The morphism $\gamma'$ can be seen as the analogue of the holonomy data in the classical case.
\end{remark}
Now consider the morphism 
\[
d: (\afnz{n-1})\times \Gm \to (\afnz{n-1})\times \Gm\times (\afnz{n-1})
\] 
defined by $d(\mathbf{a},a_n)=(\mathbf{a},a_n,\mathbf{a})$, which induces a split injection 
\[
d: (\afnz{n-1}) \wedge (\Gm)_+ \to (\afnz{n-1}) \wedge (\Gm)_+ \wedge \afnz{n-1}_+ .
\] 
From above, it is easy to see that $\gamma'\circ d = \theta:(\afnz{n-1})\times \Gm\to \afnz{n-1}, (\mathbf{a},a_n)\mapsto a_n\mathbf{a}$. Combining with (\cite[Lemma 2.48]{morel2020cellular}), we obtain the following result.
\begin{proposition}
Let $\tau:  \afnz{n} \wedge \afnz{n-1}_+ \xr{\beta} \Sigma \afnz{n-1}_+ \to \Sigma \afnz{n-1}$.
	In $\Da(K)$, the composite morphism
	\[ \afnz{n}\cong \Sigma\afnz{n-1}\wedge \Gm\xr{\Sigma d} \afnz{n} \wedge (\afnz{n-1}_+) \xr{\tau } \Sigma \afnz{n-1}_+  \]
	is given by $n_\epsilon \eta$, i.e. it is trivial if $n$ is even, and equal to $\eta$ if $n$ is odd. Furthermore, $\tau$ is trivial on the summand $ \afnz{n} \wedge \afnz{n-1}$.
\end{proposition}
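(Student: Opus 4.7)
The proof combines two ingredients established just above: the identity $\gamma'\circ d = \theta$ (where $\theta(\mathbf{a}, a_n) = a_n\mathbf{a}$ is scalar multiplication $\afnz{n-1}\times\Gm \to \afnz{n-1}$), together with the decomposition $\beta = \Sigma\gamma'_+ \circ i'$ of the Gysin map. The key technical input is Morel's computation of scalar multiplication on motivic spheres, \cite[Lemma 2.48]{morel2020cellular}.

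First, I would unwind the composite $\tau\circ\Sigma d = (\mathrm{proj}) \circ \Sigma\gamma'_+ \circ i' \circ \Sigma d$, where $(\mathrm{proj}): \Sigma\afnz{n-1}_+ \to \Sigma\afnz{n-1}$ is the canonical projection onto the reduced summand. Since $i'$ is the split injection corresponding to the summand inclusion $\Gm \hookrightarrow (\Gm)_+$, and the identification $\afnz{n}\cong \Sigma\afnz{n-1}\wedge\Gm$ matches this summand, functoriality of $\Sigma$ and $_+$ reduces the composite to $\Sigma(\gamma'\circ d)_+ = \Sigma\theta_+$. Post-composing with $(\mathrm{proj})$ then yields the suspension $\Sigma\bar\theta$ of the reduced scalar multiplication $\bar\theta:\afnz{n-1}\wedge\Gm \to \afnz{n-1}$.

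Second, I would invoke \cite[Lemma 2.48]{morel2020cellular}, which identifies the class of $\Sigma\bar\theta$ in $[\afnz{n},\Sigma\afnz{n-1}]_{\Da(K)}$ as precisely $n_\epsilon \eta$. Concretely, this gives $0$ when $n$ is even and $\eta$ when $n$ is odd, yielding the main statement of the proposition.

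For the ``Furthermore'' claim that $\tau$ is trivial on the summand $\afnz{n}\wedge\afnz{n-1}$ of $\afnz{n}\wedge\afnz{n-1}_+\cong \afnz{n} \vee (\afnz{n}\wedge\afnz{n-1})$, I would restrict $\Sigma\gamma'_+$ to this summand using the explicit formula $\gamma'(\mathbf{x},\mathbf{a},a_n,\mathbf{b}) = \mathbf{b}\mathbf{M}^T$ and observe that its image lies in the $S^1$-factor of $\Sigma\afnz{n-1}_+\cong \Sigma\afnz{n-1}\vee S^1$, which is annihilated by $(\mathrm{proj})$. The principal obstacle lies in the careful bookkeeping of the many splittings of reduced smash products with $_+$-versions, and in verifying that the scalar-multiplication component is correctly matched with the hypotheses of Morel's lemma; once these identifications are in place, the result follows by functoriality and the cited computation.
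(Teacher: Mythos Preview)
Your treatment of the first claim is correct and matches the paper's proof: both reduce the composite $\tau\circ\Sigma d$ to the suspension of the scalar-multiplication map $\theta$ and then invoke \cite[Lemma 2.48]{morel2020cellular} (the paper phrases this as passing to the first nontrivial homotopy sheaf, but this is the same argument).

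However, your argument for the ``Furthermore'' claim has a genuine gap. You assert that on the summand $\afnz{n}\wedge\afnz{n-1}$ the image of $\Sigma\gamma'_+$ ``lies in the $S^1$-factor'' of $\Sigma\afnz{n-1}_+$, but this does not follow from the formula $\gamma'(\mathbf{x},\mathbf{a},a_n,\mathbf{b})=\mathbf{b}\mathbf{M}^T$: the output depends nontrivially on $\mathbf{b}$, so the map certainly does not factor through a point on that summand. In effect you are restating the claim rather than proving it. The paper instead argues by degree. For $n-1>1$ one has
\[
\rHom_{\Da(K)}\bigl(\afnz{n}\wedge\afnz{n-1},\,\Sigma\afnz{n-1}\bigr)\;\cong\;\rHom_{\Da(K)}\bigl(\Gm^{\wedge 2n-1},\,\Gm^{\wedge n-1}[-(n-2)]\bigr)=0
\]
by Morel's $\af^1$-connectivity theorem \cite[Theorem 6.22]{morel2012a1}, so the restriction of $\tau$ to that summand vanishes for formal reasons. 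For the remaining case $n=2$, one checks directly that $\gamma':\Gm\times\Gm\times\Gm\to\Gm$, $(a,a_n,b)\mapsto a_nb$, only involves two of the three $\Gm$-factors and hence induces $0$ in $\rHom_{\Da(K)}(\Gm^{\wedge 3},\Gm)$.
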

\begin{proof}
	As discussed above, we need to understand $\theta$, which can be done passing to the first nontrivial homotopy sheaf. We then obtain a map $\tau \circ \Sigma d = \Sigma\theta: \mathbf{K}^{\rMW}_{n-1}\otimes \mathbf{K}^{\rMW}_{1}[n] \to \mathbf{K}^{\rMW}_{n-1}[n] $ and the result follows from \cite[Lemma 2.48]{morel2020cellular}.

  Then we notice that $\tau$ is trivial on the summand $(\afnz{n}) \wedge  \afnz{n-1}$  for degree reasons. In fact for $ n-1 >1$, 
$$ \rHom_{\Da(K)}( (\afnz{n}) \wedge \afnz{n-1}, \Sigma\afnz{n-1}) \cong \rHom_{\Da(K)}( \Gm^{\wedge 2n-1} ,\Gm^{\wedge n-1}[-(n-2)]) $$ 
This is $0$ following from $\af^1$-connectivity theorem \cite[Theorem 6.22]{morel2012a1}.

When $n-1=1$, in this case, 
\[
  \gamma' : \Gm \times \Gm \times \Gm \to \Gm, (a, a_n, b) \mapsto a_nb
\]
only two factors are in the multiplying, thus it induces $0 \in \rHom_{\Da(K)}( \Gm^{\wedge 3} ,\Gm)$.
\end{proof}

\begin{remark}
	\label{etaSplit}
	For $n$ odd, this gives a non-canonical splitting: the composition with the projection to the summand
	\[  \afnz{n}\xr{\Sigma d} \afnz{n} \wedge \afnz{n-1}_+\cong \afnz{n} \wedge \afnz{n-1}\vee \afnz{n}\to \afnz{n} \]
	is the identity. Thus the complement(cone) of $\Sigma d$ is given by 
	\[C(\Sigma d)\cong \afnz{n} \wedge \afnz{n-1} \sqcup_{\Sigma d, \afnz{n}} \afnz{n} \cong \afnz{n} \wedge \afnz{n-1}.\]
\end{remark}

Recall that we defined $C_\eta(S)$ in the introduction to be the cone of $\Mt(S) \xr{\eta}  \Mt(S)(1)[1]$ and 
$$\wt{\mathbf{HS}}_{2k+1}:= \Mt(S)\oplus C_\eta(S)(2k)[4k] \oplus \Mt(S)(4k+1)[8k].$$ 

The following corollary justifies the conjecture \ref{IntegralConj} on very basic examples.

\begin{corollary}
	The following motivic decompositions hold in $\DMt(S)$:
	\[ \Mt(V_2(\af^{n}))\cong \Mt(\afnz{n})\otimes \Mt(\afnz{n-1}) \] for $n$ even, and
	\[ \Mt(V_2(\af^{n}))\cong  \wt{\mathbf{HS}}_{n}\]
	for $n$ odd.
\end{corollary}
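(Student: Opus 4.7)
The plan is to apply the functor $\wt\gamma_*\Sigma_{\mathbb{P}^1}^\infty$ to the cofiber sequence
\[
(\afnz{n-1})_+ \to V_2(\af^n)_+ \to \afnz{n}\wedge \afnz{n-1}_+ \xr{\beta}
\]
of Section \ref{secV2}, producing a distinguished triangle in $\DMt(S)$
\[
\Mt(\afnz{n-1}) \to \Mt(V_2(\af^n)) \to \tbZ(n)[2n-1]\otimes \Mt(\afnz{n-1}) \xr{\beta_*} \Mt(\afnz{n-1})[1],
\]
where we use $\wt\gamma_*\Sigma^\infty\afnz{n} \cong \tbZ(n)[2n-1]$. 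The result will follow by identifying $\beta_*$ explicitly and then splitting the triangle along the direct sum decomposition of $\Mt(\afnz{n-1})$.

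The core step will be to present $\beta_*$ as a $2\times 2$ matrix via the decomposition $\Mt(\afnz{n-1}) \cong \tbZ \oplus \tbZ(n-1)[2n-3]$ at both source and target. The preceding Proposition pins down the entry $\tbZ(n)[2n-1] \to \tbZ(n-1)[2n-2]$ as $n_\epsilon \eta$, since this entry is exactly what is computed by $\tau\circ\Sigma d$. The same Proposition kills the entry $\tbZ(2n-1)[4n-4] \to \tbZ(n-1)[2n-2]$ via the triviality of $\tau$ on the summand $\afnz{n}\wedge \afnz{n-1}$. The two remaining entries, landing in the summand $\tbZ[1]$ of $\Mt(\afnz{n-1})[1]$, vanish because the basepoint section $S^0 \hookrightarrow (\afnz{n-1})_+$ splits off a trivial subsequence of the cofiber sequence, forcing $\beta$ to factor through the $\Sigma\afnz{n-1}$ summand of $\Sigma(\afnz{n-1})_+$.

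Once $\beta_*$ is known to have $n_\epsilon \eta$ as its unique non-zero entry, the two cases are routine. For $n$ even, the identity $h\eta = 0$ in $\KMW$ forces $n_\epsilon \eta = 0$; hence $\beta_* = 0$, the triangle splits, and
\[
\Mt(V_2(\af^n)) \cong \Mt(\afnz{n-1}) \oplus \tbZ(n)[2n-1]\otimes \Mt(\afnz{n-1}) \cong \Mt(\afnz{n}) \otimes \Mt(\afnz{n-1}).
\]
For $n$ odd, $n_\epsilon \eta = \eta$, and the triangle decomposes along the direct sums of source and target into two sub-triangles: a trivial one yielding the summand $\tbZ \oplus \tbZ(2n-1)[4n-4]$, and a second
\[
\tbZ(n-1)[2n-3] \to Y_2 \to \tbZ(n)[2n-1] \xr{\eta} \tbZ(n-1)[2n-2]
\]
whose middle term $Y_2$ is, after the appropriate twist and shift, identified with $C_\eta(S)(2k)[4k]$ for $n = 2k+1$. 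Summing these contributions produces $\wt{\mathbf{HS}}_n$.

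The hard part will be the careful bookkeeping for the four entries of $\beta_*$. The Proposition identifies the principal entry and kills one off-diagonal entry; the vanishing of the two remaining entries, which land in the $\tbZ[1]$ summand, requires the separate observation that the cofiber sequence splits off a trivial augmentation piece, so that $\beta$ factors through $\Sigma\afnz{n-1}$. With all four entries in hand, the remainder of the argument becomes formal, resting only on the relation $h\eta=0$ and on the definition of $C_\eta$.
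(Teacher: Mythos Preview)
Your argument is correct and is exactly the elaboration the paper has in mind: the corollary is stated without proof, as an immediate consequence of the preceding Proposition on $\tau$, and your matrix analysis of $\beta_*$ is the intended mechanism.

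One small imprecision is worth flagging. You write that the entry $\tbZ(n)[2n-1]\to\tbZ(n-1)[2n-2]$ ``is exactly what is computed by $\tau\circ\Sigma d$''. Strictly speaking, $\Sigma d$ is \emph{not} the canonical basepoint inclusion $\afnz{n}\hookrightarrow \afnz{n}\wedge\afnz{n-1}_+$: the map $d$ is a diagonal, so $\Sigma d$ has a nontrivial component in the summand $\afnz{n}\wedge\afnz{n-1}$ as well. What makes your identification work is the combination of \emph{both} conclusions of the Proposition: since $\tau$ vanishes on $\afnz{n}\wedge\afnz{n-1}$, and since $\Sigma d$ followed by projection to the $\afnz{n}$ summand is the identity (this is the content of Remark~\ref{etaSplit}, and holds for all $n$ by inspection of $d$), one gets $\tau\circ\Sigma d=\tau\circ\iota_1$ where $\iota_1$ is the basepoint inclusion. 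So the entry is indeed $n_\epsilon\eta$, but the two parts of the Proposition are used jointly, not separately, for this one entry. Your last paragraph suggests you are aware of this, but the body of the argument phrases it as if the Proposition hands you two matrix entries independently.

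A second, more cosmetic point: the identification of $Y_2$ with $C_\eta(S)(2k)[4k]$ requires matching the direction and shift of the defining triangle for $C_\eta$ against the rotated triangle for $Y_2$. You gesture at this with ``after the appropriate twist and shift''; it would be cleaner to write out that $Y_2$ is the fiber of $\eta:\tbZ(n)[2n-1]\to\tbZ(n-1)[2n-2]$ and then match this against the paper's definition of $C_\eta$ explicitly.
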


\begin{corollary}
	In the $\eta$-inverted category $\Da(K)[\eta^{-1}]$, we have the  following split cofiber sequences:
 \[\afnz{n-1}_+ \to {V_2(\af^n)}_+ \to \afnz{n} \wedge (\afnz{n-1}_+) \xr{0 } \]
for $n$ even, and
\[S^0 \to {V_2(\af^n)}_+ \to \afnz{n} \wedge \afnz{n-1} \xr{0 } \]
for $n$ odd.
\end{corollary}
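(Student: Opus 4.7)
My plan is to begin with the cofiber sequence
\[
\afnz{n-1}_+ \to V_2(\af^n)_+ \to \afnz{n}\wedge \afnz{n-1}_+ \xr{\beta} \Sigma\afnz{n-1}_+
\]
of Section \ref{secV2}, and to analyze $\beta$ componentwise using the canonical splittings $\afnz{n-1}_+\cong \afnz{n-1}\vee S^0$ and $\afnz{n}\wedge \afnz{n-1}_+\cong (\afnz{n}\wedge \afnz{n-1})\vee \afnz{n}$, together with the induced splitting $\Sigma\afnz{n-1}_+\cong \Sigma\afnz{n-1}\vee S^1$. With respect to these, $\beta$ becomes a $2\times 2$ matrix of maps.

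By the preceding proposition, the projection $\tau$ of $\beta$ onto $\Sigma\afnz{n-1}$ is zero on the $\afnz{n}\wedge\afnz{n-1}$ summand and equals $n_\epsilon\eta$ on the $\afnz{n}$ summand via $\Sigma d$. For the two remaining entries landing in $S^1\cong\tbZ[1]$, I would show vanishing in $\Da(K)$ by identifying $\rHom_{\Da(K)}(-,\tbZ[1])\cong \HH^{1,0}(-)$ and invoking the computation of the MW-motivic cohomology of punctured affine spaces, which forces $\rHom_{\Da(K)}(\afnz{n},S^1)=\rHom_{\Da(K)}(\afnz{n}\wedge\afnz{n-1},S^1)=0$ in the relevant range $n\ge 2$. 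Hence $\beta$ is diagonal with single nonzero entry $n_\epsilon\eta$, and the cofiber triangle decomposes as a direct sum of two triangles
\[
\afnz{n-1}\to B_1\to \afnz{n}\xr{n_\epsilon\eta}\Sigma\afnz{n-1},\qquad S^0\to B_2\to \afnz{n}\wedge\afnz{n-1}\xr{0},
\]
with $V_2(\af^n)_+\cong B_1\oplus B_2$.

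For $n$ even, $n_\epsilon\eta=0$, so both triangles split already in $\Da(K)$, and a fortiori in $\Da(K)[\eta^{-1}]$, giving the first claim. For $n$ odd, $n_\epsilon\eta=\eta$ becomes an isomorphism in $\Da(K)[\eta^{-1}]$, so $B_1\cong 0$ there, while $B_2\cong S^0\vee(\afnz{n}\wedge\afnz{n-1})$ remains unchanged; combining gives $V_2(\af^n)_+\cong S^0\vee(\afnz{n}\wedge\afnz{n-1})$ in $\Da(K)[\eta^{-1}]$, which is equivalent to the second split cofiber sequence.

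The main obstacle is verifying the diagonality of $\beta$, in particular the vanishing of its two $S^1$-components, which requires a careful $\rHom$-computation in $\Da(K)$ relying on $\af^1$-connectivity and the description of MW-motivic cohomology. Once diagonality is established, the direct-sum decomposition of the cofiber triangle is formal, and the cancellation for $n$ odd is immediate from the invertibility of $\eta$ in $\Da(K)[\eta^{-1}]$.
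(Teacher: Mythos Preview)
Your matrix decomposition of $\beta$ is a valid alternative to the paper's argument, which for $n$ odd instead applies the octahedral axiom to $S^0\to\afnz{n-1}_+\to V_2(\af^n)_+$ to obtain a triangle $S^0\to V_2(\af^n)_+\to C(\Sigma^{-1}\tau)$, and then identifies $C(\Sigma^{-1}\tau)\cong C(\Sigma d)\cong \afnz{n}\wedge\afnz{n-1}$ using Remark~\ref{etaSplit} and the fact that $\tau\circ\Sigma d=\eta$ is invertible; the splitting is then automatic since $V_2(\af^n)_+\to S^0$ retracts the basepoint. Your route trades the octahedron for a direct componentwise analysis of $\beta$, and once $\beta$ is shown to be block-diagonal the rest is indeed formal. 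Note that the identification $\tau|_{\afnz{n}}=n_\epsilon\eta$ uses both parts of the preceding Proposition together with the fact (from Remark~\ref{etaSplit}, valid for all $n$) that $\Sigma d$ projects to the identity on the $\afnz{n}$ summand: one has $\Sigma d=\iota_{\afnz{n}}+(\text{term in }\afnz{n}\wedge\afnz{n-1})$, so $\tau\circ\iota_{\afnz{n}}=\tau\circ\Sigma d-0=n_\epsilon\eta$.

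There is, however, a gap in your justification of the vanishing of the $S^1$-components. The identification $\rHom_{\Da(K)}(-,\tbZ[1])\cong\HH^{1,0}(-)$ is not correct: the unit of $\Da(K)$ is the constant sheaf $\bZ$, whereas MW-motivic cohomology is represented in $\DMt(K)$, and these do not agree in weight $0$ (the relevant sheaf there is $\KMW_0=\mathbf{GW}$, not $\bZ$). So you cannot read off these $\rHom$-groups from MW-motivic cohomology. The fix is much simpler than any $\rHom$-computation: the map $\afnz{n-1}_+\to V_2(\af^n)_+$ is the $(\,\cdot\,)_+$ of a morphism of schemes and hence commutes with the collapse maps to $S^0$. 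Mapping the cofiber triangle to the trivial triangle $S^0\xr{=}S^0\to 0\to S^1$ via the axiom (TR3), the induced map on cofibers is the unique map to $0$, and commutativity of the rightmost square forces $(\Sigma\,\text{proj}_{S^0})\circ\beta=0$. This kills both $S^1$-components simultaneously, for every $n\geq 2$, and with this correction your argument goes through.
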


\begin{proof}
	The result is obvious if $n$ is even. In case $n$ is odd, we consider the following diagram:
\[\begin{tikzcd}
	{S^0} & {\afnz{n-1}_+} & {\afnz{n-1}} \\
	{S^0} & {{V_2(\af^n)}_+} & {C(\Sigma\inv \tau)} \\
	{\afnz{n-1}_+} & {{V_2(\af^n)}_+} & {\afnz{n} \wedge \afnz{n-1}_+ }
	\arrow[Rightarrow, no head, from=1-1, to=2-1]
	\arrow[from=2-1, to=3-1]
	\arrow[from=1-1, to=1-2]
	\arrow[from=2-1, to=2-2]
	\arrow[from=3-1, to=3-2]
	\arrow[from=1-2, to=1-3]
	\arrow[from=1-2, to=2-2]
	\arrow[Rightarrow, no head, from=2-2, to=3-2]
	\arrow[from=2-2, to=2-3]
	\arrow[from=3-2, to=3-3]
	\arrow[dashed, from=1-3, to=2-3]
	\arrow[dashed, from=2-3, to=3-3]
	\arrow["\ \ \tau"',"+1"{description}, curve={height=50pt}, dotted, from=3-3, to=1-3]
\end{tikzcd}\]
Since $\tau \circ \Sigma d =\eta$ is an isomorphism, we can deduce that   $C(\Sigma\inv\tau)\cong C(\Sigma d) \cong \afnz{n} \wedge \afnz{n-1}$ from Remark \ref{etaSplit}.
\end{proof}

The same outcome holds in $\DMt(S)[\eta\inv]$.
\begin{corollary}
The following decompositions hold in $\DMt(S)[\eta\inv]$
\[\Meta(V_2(\af^n))\cong \Meta(\afnz{n})\otimes \Meta(\afnz{n-1})\cong \mathbf{HS}_n\otimes \Meta(\afnz{n-1})\]
for $n$ even, and 
 \[\Meta(V_2(\af^n))\cong \mathbf{HS}_n\]
for $n$ odd. 
\end{corollary}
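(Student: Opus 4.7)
The plan is to deduce both decompositions by applying the $\eta$-inverting localization $\DMt(S)\to \DMt(S)[\eta^{-1}]$ to the integral decompositions established in the immediately preceding corollary: namely $\Mt(V_2(\af^n))\cong \Mt(\afnz{n})\otimes \Mt(\afnz{n-1})$ for $n$ even and $\Mt(V_2(\af^n))\cong \wt{\mathbf{HS}}_n$ for $n$ odd. Since this localization is an exact symmetric monoidal functor, it preserves direct sums, tensor products, and cones, so the integral decompositions transport directly; only the identification of the resulting summands with $\mathbf{HS}_n$ requires a brief check.

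For $n$ even, the localized isomorphism reads $\Meta(V_2(\af^n))\cong \Meta(\afnz{n})\otimes \Meta(\afnz{n-1})$, and the identification with $\mathbf{HS}_n\otimes \Meta(\afnz{n-1})$ is then immediate from the defining equality $\mathbf{HS}_{2k}:=\Meta(\afnz{2k})$.

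For $n=2k+1$ odd, the work amounts to computing $\wt{\mathbf{HS}}_{2k+1}[\eta^{-1}]$. Starting from
\[
\wt{\mathbf{HS}}_{2k+1}= \Mt(S)\oplus C_\eta(S)(2k)[4k] \oplus \Mt(S)(4k+1)[8k],
\]
the key observation is that $\eta\colon \Mt(S)\to \Mt(S)(1)[1]$ becomes an isomorphism after localization, hence its cone $C_\eta(S)$ becomes zero in $\DMt(S)[\eta^{-1}]$. The middle summand therefore vanishes, leaving $\Meta(S)\oplus \Meta(S)(4k+1)[8k]=\mathbf{HS}_{2k+1}$, as desired.

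No substantial obstacle is expected: the argument is purely formal once the preceding integral corollary is in hand. As an alternative route, one could instead start from the split cofiber sequences in $\Da(K)[\eta^{-1}]$ of the previous corollary and transport them through the canonical functor into $\DMt(S)[\eta^{-1}]$; in the even case, applying the Künneth-type identification $\Meta(\afnz{n}\wedge \afnz{n-1}_+)\cong \wt{\Meta}(\afnz{n})\otimes \Meta(\afnz{n-1})$ to the split sequence reconstructs the tensor product form, while in the odd case the split sequence identifies $\Meta(V_2(\af^n))$ directly with $\Meta(S)\oplus \Meta(\afnz{n}\wedge \afnz{n-1})$, which after computing the reduced smash product yields $\mathbf{HS}_{2k+1}$.
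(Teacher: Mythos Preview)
Your proposal is correct. The paper itself offers no proof beyond the one-line remark ``The same outcome holds in $\DMt(S)[\eta\inv]$'' immediately preceding the corollary, which points back to the split cofiber sequences just established in $\Da(K)[\eta^{-1}]$; your alternative route is exactly this. Your primary route---applying the monoidal localization $\DMt(S)\to\DMt(S)[\eta^{-1}]$ directly to the integral decomposition and observing that $C_\eta(S)$ vanishes once $\eta$ is inverted---is if anything slightly more direct, and is a perfectly valid way to obtain the result.
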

If we apply these results to the cohomology theory $\Heta$, this implies the decomposition of  $\Heta(V_{2}(\af^{n}))$ in terms of the cohomology of $\afnz{n}$ and $\afnz{n-1}$.
\begin{corollary}
	We have an isomorphism
	\[ 
	\Heta^{*}(V_{2}(\af^{n}))\cong \begin{cases}\Heta^{*}(\afnz{n}) \otimes_{\Heta^{*}(S)} \Heta^{*}(\afnz{n-1}) & \text{ for $n$ even.} \\
	\Heta^{*}(\mathbf{HS}_{n}) & \text{for $ n $ odd.}\end{cases}
	\] 
\end{corollary}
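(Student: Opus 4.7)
The plan is to reduce this directly to the preceding corollary on motivic decompositions in $\DMt(S)[\eta^{-1}]$ by applying the representability of $\Heta^*$ as $\rHom_{\DMt(S)[\eta^{-1}]}(-, \mathbf{W}_*[*])$ and invoking a Künneth principle for the relevant building blocks.

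First, I would handle the odd case, which is essentially a tautology: the previous corollary gives $\Meta(V_2(\af^n)) \cong \mathbf{HS}_n$ in $\DMt(S)[\eta^{-1}]$, and applying $\Heta^{*}(-)$ to both sides yields $\Heta^{*}(V_2(\af^n)) \cong \Heta^{*}(\mathbf{HS}_n)$ by definition of the cohomology of a motive. Next, for the even case, the previous corollary gives the isomorphism $\Meta(V_2(\af^n)) \cong \mathbf{HS}_n \otimes \Meta(\afnz{n-1})$, so applying $\Heta^{*}(-)$ reduces the statement to the Künneth-type formula
\[
\Heta^{*}(\mathbf{HS}_n \otimes \Meta(\afnz{n-1})) \cong \Heta^{*}(\mathbf{HS}_n) \otimes_{\Heta^{*}(S)} \Heta^{*}(\afnz{n-1}),
\]
together with the identification $\Heta^{*}(\mathbf{HS}_n) \cong \Heta^{*}(\afnz{n})$, which follows from $\mathbf{HS}_n \cong \Meta(\afnz{n})$.

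The Künneth isomorphism holds here because $\mathbf{HS}_n \cong \Meta(S) \oplus \Meta(S)(n)[2n-1]$ is a direct sum of Tate twists and shifts of the unit object $\Meta(S)$. Thus taking $\rHom_{\DMt(S)[\eta^{-1}]}(\mathbf{HS}_n \otimes \Meta(\afnz{n-1}), -)$ splits as a direct sum indexed by the summands of $\mathbf{HS}_n$, and each summand contributes a copy of $\Heta^{*}(\afnz{n-1})$ shifted appropriately — which is exactly the right-hand side expressed as a tensor product of graded $\Heta^{*}(S)$-modules.

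The main (and only) obstacle is to ensure the Künneth formula is applied cleanly; since one of the tensor factors is a direct sum of shifts of the unit, no flatness or projective resolution arguments are required, and the computation is formal. I would therefore simply cite the previous corollary, spell out the above one-line Künneth argument in the even case, and note that the odd case is immediate.
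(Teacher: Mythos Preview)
Your proposal is correct and matches the paper's approach: the paper states this corollary without proof, treating it as an immediate consequence of the preceding motivic decomposition in $\DMt(S)[\eta^{-1}]$ after applying $\Heta^*$. Your explicit Künneth argument for the even case (using that $\mathbf{HS}_n$ is a sum of Tate twists of the unit) is exactly the formal step the paper leaves implicit.
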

Note that the two corollaries show that both $\mrm{P}(2,n)$ and $\mrm{PH}(2,n)$ hold true for all $n$.

\subsection[Cofiber sequences for Vk]{Cofiber sequences for $V_k(\af^n)$}
\label{SVCof}

In this section, we show that computations analogous to the ones in Section \ref{secV2} can be applied to $V_k(\af^n)$ for $n\geq k\geq 2$.

Applying the same transformations
\[\begin{bmatrix}
	(a_1 & \cdots & a_{n-1})* & a_n\circ\\
	\ddots\\
	b_{j1} & \cdots &\cdots & b_{jn}\\
	\ddots
\end{bmatrix} \mapsto 
\begin{bmatrix}
	(a_1 & \cdots & a_{n-1})* & 1\\
	\ddots\\
	(b'_{j1} & \cdots & b'_{j,n-1})* & 0\\
	\ddots
\end{bmatrix}\]
and lemma \ref{Mretract}, we obtain the following cofiber sequences for $k\geq 2$:
\[V_{k-1}(\af^{n-1})_+ \to V_k(\af^n)_+ \to \afnz{n} \wedge V_{k-1}(\af^{n-1})_+ \xr{\beta }\]
and the corresponding diagrams:
{\small \[ 
	\begin{tikzcd}[column sep=tiny]
	{(\afnz{n-1}\times \Gm\times V_{k-1}(\af^{n-1}))_+} & {(\af^{n-1}\times \Gm\times V_{k-1}(\af^{n-1}))_+} & {\af^{n-1}/\afnz{n-1}\wedge (\Gm \times V_{k-1}(\af^{n-1}))_+} & { } \\
	U & {V_k(\af^n)_+} & {\af^{n-1}/\afnz{n-1}\wedge (\Gm \times V_{k-1}(\af^{n-1}))_+} & { } \\
	{(\afnz{n-1}\times V_{k-1}(\af^{n-1}))_+}
	\arrow[from=3-1, to=2-2]
	\arrow[from=2-2, to=2-3]
	\arrow["\alpha", from=2-3, to=2-4]
	\arrow["{j_2}", from=1-2, to=2-2]
	\arrow[Rightarrow, no head, from=1-3, to=2-3]
	\arrow[from=1-1, to=1-2]
	\arrow[from=1-2, to=1-3]
	\arrow[from=2-1, to=2-2]
	\arrow["\cong"{description}, from=2-1, to=3-1]
	\arrow["{j_1}", from=1-1, to=2-1]
	\arrow[ from=1-3, to=1-4]
	\arrow["\gamma"', curve={height=18pt}, from=1-1, to=3-1]
\end{tikzcd}\]}

In the above diagram, $\gamma:Q_{2n-3}\times \Gm\times V_{k-1}(\af^{n-1})\to Q_{2n-3}\times V_{k-1}(\af^{n-1}) $ is the morphism defined by 

	\[ (\mathbf{x},\mathbf{a},a_n,\mathbf{B})\mapsto
	(\mathbf{x},\mathbf{a},\mathbf{B}\mathbf{M}(\mathbf{x},\mathbf{a},a_n)^T)
		\]
where $\mathbf{M}(\mathbf{x},\mathbf{a},a_n)=\mathbf{I}+(a_n-1)\mathbf{a}^T\mathbf{x}$.

In analogy with the previous section, we expect to get split decompositions.
\begin{proposition}
	Using our inductive hypothesis and $\mrm{PH}(k,n)$, we conclude that we have the following split cofiber sequences in $\Da(K)[\eta^{-1}]$

 \[V_{k-1}(\af^{n-1})_+ \to {V_k(\af^n)}_+ \to \afnz{n} \wedge V_{k-1}(\af^{n-1})_+ \xr{0 } \]
for $n$ even, and
\[V_{k-2}(\af^{n-2})_+ \to {V_k(\af^n)}_+ \to \afnz{n} \wedge \afnz{n-1} \wedge  V_{k-2}(\af^{n-2})_+ \xr{0 }, \]
for $n$ odd. In particular, $\mrm{P}(k,n)$ holds.
\end{proposition}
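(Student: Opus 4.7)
The plan is to generalize the $V_2(\af^n)$ analysis of Section \ref{secV2} to arbitrary $k \geq 2$, using the inductive hypotheses $\mrm{P}(k', n')$ for smaller pairs together with $\mrm{PH}(k, n)$ to guarantee consistency. The starting point is the cofiber sequence
\[
V_{k-1}(\af^{n-1})_+ \to V_k(\af^n)_+ \to \afnz{n} \wedge V_{k-1}(\af^{n-1})_+ \xr{\beta}
\]
constructed at the start of Section \ref{SVCof}, together with the factorization of $\beta$ through the holonomy map $\gamma(\mathbf{x}, \mathbf{a}, a_n, \mathbf{B}) = (\mathbf{x}, \mathbf{a}, \mathbf{B}\mathbf{M}^T)$, where $\mathbf{M}(\mathbf{x}, \mathbf{a}, a_n) = \mathbf{I} + (a_n - 1)\mathbf{a}^T\mathbf{x}$.

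First I would construct a diagonal-type morphism
\[
d \colon \afnz{n-1} \times \Gm \times V_{k-2}(\af^{n-2}) \to \afnz{n-1} \times \Gm \times V_{k-1}(\af^{n-1}),
\]
where the embedding $V_{k-2}(\af^{n-2}) \hookrightarrow V_{k-1}(\af^{n-1})$ is obtained as a section of the ``first row'' projection $V_{k-1}(\af^{n-1}) \to \afnz{n-1}$, whose existence in $\Da(K)[\eta^{-1}]$ is provided by the inductive hypothesis $\mrm{P}(k-1, n-1)$. For a suitable choice of section, the composite $\gamma \circ d$ projected onto the sphere summand reduces exactly to the scalar multiplication $(\mathbf{a}, a_n) \mapsto a_n \mathbf{a}$ from the $V_2$ case. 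By \cite[Lemma 2.48]{morel2020cellular}, this induces the map $n_\epsilon \eta$ on the first non-trivial homotopy sheaf, and by the $\af^1$-connectivity theorem \cite[Theorem 6.22]{morel2012a1} the restriction of $\beta$ to the complementary summand $\afnz{n} \wedge V_{k-1}(\af^{n-1})$ vanishes for degree reasons, exactly as in the $V_2$ analysis.

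For $n$ even, $n_\epsilon = (n/2) h$ with $h = 1 + \langle -1\rangle$ and $h \eta = 0$, so $n_\epsilon \eta$ vanishes after $\eta$-inversion. Hence $\beta = 0$ and the first split cofiber sequence holds. For $n$ odd, $n_\epsilon \eta \equiv \eta$ modulo $h \eta = 0$ becomes an isomorphism after $\eta$-inversion. Following the argument of Remark \ref{etaSplit} and the corollary it enables in the $V_2$ case, this isomorphism absorbs the unit summand of $V_{k-1}(\af^{n-1})_+$ against the unit summand $\afnz{n} \subset \afnz{n} \wedge V_{k-1}(\af^{n-1})_+$. Combining with the inductive splitting $V_{k-1}(\af^{n-1})_+ \cong V_{k-2}(\af^{n-2})_+ \vee (\afnz{n-1} \wedge V_{k-2}(\af^{n-2})_+)$ from $\mrm{P}(k-1, n-1)$ (valid since $n-1$ is even), the remaining exact triangle is
\[
V_{k-2}(\af^{n-2})_+ \to V_k(\af^n)_+ \to \afnz{n} \wedge \afnz{n-1} \wedge V_{k-2}(\af^{n-2})_+ \xr{0}
\]
as claimed. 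Iterating yields the decompositions of $\mrm{P}(k, n)$.

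The main difficulty is the construction and analysis of $d$ for general $k$: since $\mathbf{B}$ is now a $(k-1) \times (n-1)$ matrix rather than a single row, I need to verify that a convenient section of the first-row projection makes $\mathbf{M}^T$ act trivially on the lower $k-2$ rows (for instance by choosing the remaining rows in the hyperplane orthogonal to $\mathbf{x}$, so that $\mathbf{B}\mathbf{x}^T$ only has a non-zero first coordinate). Once this reduction to the scalar $V_2$ computation is in place, the $\eta$-cancellation is automatic, and $\mrm{PH}(k, n)$ serves as a consistency check guaranteeing that no extraneous summands persist after the cancellation.
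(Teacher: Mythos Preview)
Your outline matches the paper's strategy closely for $n$ odd: you correctly identify the diagonal map $d$, the reduction to $\eta\wedge 1$ via \cite[Lemma 2.48]{morel2020cellular}, and the use of Remark \ref{etaSplit} to identify the cofiber. The paper carries this out with an explicit model: it replaces $V_{k-1}(\af^{n-1})$ by $QV_{k-1}(\af^{n-1})=\{(\mathbf{B},\mathbf{x}):\mathbf{B}\mathbf{x}^T=[1,0,\dots,0]^T\}$, which is exactly the device that makes $\mathbf{M}^T$ act trivially on the lower $k-2$ rows---the point you flag as the main difficulty.

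There is, however, a genuine gap in your treatment of $n$ even. You argue that $\beta=0$ because (i) the component along $\Sigma d$ is $n_\epsilon\eta=0$ and (ii) the restriction to the ``complementary summand $\afnz{n}\wedge V_{k-1}(\af^{n-1})$'' vanishes by $\af^1$-connectivity. Step (ii) works for $k=2$ because the target $\Sigma\afnz{n-1}$ is a single sphere, but for general $k$ the target $\Sigma V_{k-1}(\af^{n-1})_+$ has many summands in the $\eta$-inverted category, and there \emph{are} nonzero Hom groups between source and target summands in matching degrees (e.g.\ already for $V_2(\af^4)$ one finds a summand-pair in equal degree; that particular component vanishes because it equals $n_\epsilon\eta$, not because of connectivity). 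The paper does not attempt to compute $\beta$ directly here. Instead it uses $\mrm{PH}(k,n)$ as the actual splitting mechanism for \emph{both} parities: the cohomology computation provides explicit classes that produce a section of the cofiber map exactly as in the proof of Theorem \ref{SpMain}. So $\mrm{PH}(k,n)$ is not a consistency check but the argument itself; the delicate $\tau\mu=\eta\wedge 1$ computation is needed only to show that the second cofiber sequence (for $n$ odd) \emph{exists}, after which $\mrm{PH}(k,n)$ again furnishes the splitting.

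A smaller point: your $d$ is not ``a section of the first-row projection $V_{k-1}(\af^{n-1})\to\afnz{n-1}$''. The paper builds $d$ from the splitting $j:\afnz{n-1}\wedge V_{k-2}(\af^{n-2})_+\to V_{k-1}(\af^{n-1})_+$ of the even-case cofiber sequence (available since $n-1$ is even) followed by the diagonal $d'(x,y)=(x,f(y),y)$ with $f$ the first-row map; it then explicitly warns that $\mu$ must \emph{not} be taken as $1\wedge i$.
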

\begin{proof}
  If $\mrm{PH}(k,n)$ is assumed to be true, it allows us to construct split surjections in $\eta$-inverted cohomology. These can be used, as in the proof of Theorem \ref{SpMain}, to split the cofiber sequences in the statement of the Proposition, once we have proved that the cofiber sequences actually exist. For $n$ even, there is nothing to do. For $n=2i+1$ odd, consider the following diagram:
	\[\begin{tikzcd}
		{V_{k-2}(\af^{n-2})_+} & {V_{k-1}(\af^{n-1})_+} & {\afnz{n-1}\wedge V_{k-2}(\af^{n-2})_+} \\
		{V_{k-2}(\af^{n-2})_+} & {{V_k(\af^n)}_+} & {C(\Sigma^{-1}\tau)} \\
		{V_{k-1}(\af^{n-1})_+} & {{V_k(\af^n)}_+} & {\afnz{n} \wedge V_{k-1}(\af^{n-1})_+ }\\
		\arrow[Rightarrow, no head, from=1-1, to=2-1]
		\arrow[from=2-1, to=3-1]
		\arrow["i", from=1-1, to=1-2]
		\arrow[dashed,"p",curve={height=-15pt},from=1-2, to=1-1]
		\arrow[from=2-1, to=2-2]
		\arrow[from=3-1, to=3-2]
		\arrow[from=1-2, to=1-3]
		\arrow[dashed,"j",curve={height=-15pt},from=1-3, to=1-2]
		\arrow[from=1-2, to=2-2]
		\arrow[Rightarrow, no head, from=2-2, to=3-2]
		\arrow[from=2-2, to=2-3]
		\arrow[from=3-2, to=3-3]
		\arrow[dashed, from=1-3, to=2-3]
		\arrow[dashed, from=2-3, to=3-3]
		\arrow["\ \ \tau"',"+1"{description}, curve={height=40pt},dotted, from=3-3, to=1-3]
	\end{tikzcd}\]

The morphism $\tau$ is simply the composite 
\[\tau:\afnz{n} \wedge V_{k-1}(\af^{n-1})_+ \xr{\beta} \Sigma V_{k-1}(\af^{n-1})_+\to  \Sigma\afnz{n-1}\wedge V_{k-2}(\af^{n-2})_+\]
and we now prove that it is split in $\Da(K)[\eta\inv]$, by constructing a morphism 
$$\mu:\afnz{n} \wedge V_{k-2}(\af^{n-1})_+\to \afnz{n} \wedge V_{k-1}(\af^{n-1})_+ $$ 
such that $\tau\mu$ is an isomorphism.
\begin{remark}
	Notice that $\mu$ shall NOT be $1\wedge i$. Instead, we will define a morphism analogous to $\Sigma d: \afnz{n} \to \afnz{n} \wedge \af^{n-1}_+$ in the previous section.
\end{remark}
Since $n-1$ is even, we have the split cofiber sequence 
\[
V_{k-2}(\af^{n-2})_+ \xr{i} {V_{k-1}(\af^{n-1})}_+ \to \afnz{n-1} \wedge V_{k-2}(\af^{n-2})_+
\]
for which we choose a splitting 
\[
j:\afnz{n-1} \wedge V_{k-2}(\af^{n-2})_+\to {V_{k-1}(\af^{n-1})}_+.
\]

Denoting by $f: V_{k-1}(\af^{n-1})\to \afnz{n-1} $ the first row map, we define a morphism of schemes 
\[
d': \Gm \times V_{k-1}(\af^{n-1})\to\Gm \times\afnz{n-1}\times V_{k-1}(\af^{n-1})
\]
by $d'(x,y)=(x,f(y),y)$, and we obtain a morphism in $\Da(K)[\eta\inv]$ of the form
\[ d: \Gm\wedge \afnz{n-1} \wedge V_{k-2}(\af^{n-2})_+ \xr{1\wedge j} \Gm \wedge V_{k-1}(\af^{n-1})_+\xr{d'}\Gm \wedge \afnz{n-1}\wedge V_{k-1}(\af^{n-1})_+.\]

Let $\mu=\Sigma d$, we can now prove that the composition
\[ \afnz{n} \wedge V_{k-2}(\af^{n-2})_+ \xr{\Sigma d} \afnz{n} \wedge V_{k-1}(\af^{n-1})_+  \xr{\tau} \Sigma \afnz{n-1}\wedge V_{k-2}(\af^{n-2})_+ \]
is simply $\eta \wedge 1 $.

\begin{lemma}
	The composition 
\[ \Gm \times V_{k-1}(\af^{n-1})\xr{d'}
\afnz{n-1}\times \Gm \times V_{k-1}(\af^{n-1})
\xr{pr_2\circ\gamma} V_{k-1}(\af^{n-1})
\]
is homotopic to the morphism 
\[ (c,\begin{bmatrix}
	a_1& \cdots & a_{n-1}\\
	\ddots \\
	b_{j1}& \cdots&b_{j,n-1}\\
	\ddots
	
\end{bmatrix})\mapsto
\begin{bmatrix}
	ca_1& \cdots & ca_{n-1}\\
	\ddots \\
	b_{j1}& \cdots&b_{j,n-1}\\
	\ddots
	
\end{bmatrix} \]
Thus this morphism induces 
\[\eta \wedge 1: \afnz{n} \wedge V_{k-2}(\af^{n-2})_+ \xr{\Sigma d} \afnz{n} \wedge V_{k-1}(\af^{n-1})_+  \xr{\tau} \Sigma \afnz{n-1}\wedge V_{k-2}(\af^{n-2})_+ \]

Moreover, Let $p: V_{k-1}(\af^{n-1})_+ \to V_{k-2}(\af^{n-2})_+$ be a splitting, the composition

\[\afnz{n} \wedge V_{k-2}(\af^{n-2})_+ \xr{\Sigma d} \afnz{n} \wedge V_{k-1}(\af^{n-1})_+  \xr{1\wedge p} \afnz{n}\wedge V_{k-2}(\af^{n-2})_+ \]
is the identity.

\end{lemma}
 \begin{proof}
	First we define $QV_{k}(\af^{n}):\{(\mathbf{B},\mathbf{x})\in V_{k}(\af^{n})\times\af^{n}, s.t. \mathbf{B}\mathbf{x}^T=[1,0,\ldots,0]^T \}$. This gives a $\af^{n-k}$ bundle over $V_{k}(\af^{n})$. 

	Now we replace $V_{k-1}(\af^{n-1})$ with $QV_{k-1}(\af^{n-1})$ and we find that $d'$ is equivalent to the morphism 
	\[ \Gm \times QV_{k-1}(\af^{n-1})\to Q_{2n-3} \times \Gm \times V_{k-1}(\af^{n-1}),(c,\mathbf{B},\mathbf{x})\mapsto ((\mathbf{x},f(\mathbf{B})),c,\mathbf{B})
	\]
	Following the definition of $\gamma$, we have 
	\[pr_2\circ\gamma((\mathbf{x},f(\mathbf{B})),c,\mathbf{B}) =B+(c-1)[1,0,\ldots,0]^Tf(\mathbf{B})\]
	which is exactly the matrix
	\[\begin{bmatrix}
		ca_1& \cdots & ca_{n-1}\\
		&\ddots \\
		b_{j1}& \cdots&b_{j,n-1}\\
		&&\ddots
		
	\end{bmatrix}\]

 Thus the morphism $ \tau \circ \Sigma d: \Gm \wedge \Sigma \afnz{n-1} \wedge V_{k-2}(\af^{n-2})_+ \to \Sigma \afnz{n-1}\wedge V_{k-2}(\af^{n-2})_+$
  is induced by 

  \[
    (c, \mathbf{a}, \begin{bmatrix}
		b_1& \cdots & b_{n-2}\\
		&\ddots \\
		b_{j1}& \cdots&b_{j,n-2}\\
		&&\ddots
		
	\end{bmatrix}) \mapsto
    (c,\mathbf{a}, \begin{bmatrix}
      a_1& \cdots & a_{n-2} & a_{n-1}\\
      b_1& \cdots & b_{n-2} & 0\\
		&\ddots \\
      b_{j1}& \cdots&b_{j,n-2} & 0\\
		&&\ddots
		
	\end{bmatrix}) \mapsto
    (c\mathbf{a}, \begin{bmatrix}
		b_1& \cdots & b_{n-2}\\
		&\ddots \\
		b_{j1}& \cdots&b_{j,n-2}\\
		&&\ddots
		
	\end{bmatrix}),
  \]
  where $ \mathbf{a}=(a_1,\cdots, a_{n-1}) $. We can conclude it is $ \eta \wedge 1 $ by applying \cite[Lemma 2.48]{morel2020cellular} again.

  Finally, the composition
\[ \Gm \wedge \Sigma \afnz{n-1} \wedge V_{k-2}(\af^{n-2})_+ \xr{\Sigma d} \Gm \wedge \Sigma \afnz{n-1} \wedge V_{k-1}(\af^{n-1})_+  \xr{1\wedge p} \Gm \wedge \Sigma \afnz{n-1}\wedge V_{k-2}(\af^{n-2})_+ \]
is induced from 
\[
    (c, \mathbf{a}, \begin{bmatrix}
		b_1& \cdots & b_{n-2}\\
		&\ddots \\
		b_{j1}& \cdots&b_{j,n-2}\\
		&&\ddots
		
	\end{bmatrix}) \mapsto
    (c,\mathbf{a}, \begin{bmatrix}
      a_1& \cdots & a_{n-2} & a_{n-1}\\
      b_1& \cdots & b_{n-2} & 0\\
		&\ddots \\
      b_{j1}& \cdots&b_{j,n-2} & 0\\
		&&\ddots
		
	\end{bmatrix}) \mapsto
  (c, \mathbf{a}, \begin{bmatrix}
		b_1& \cdots & b_{n-2}\\
		&\ddots \\
		b_{j1}& \cdots&b_{j,n-2}\\
		&&\ddots
		
	\end{bmatrix}).
  \]
 \end{proof}

 With the same argument as in Remark \ref{etaSplit} we have $ C(\Sigma d)\cong\afnz{n} \wedge \afnz{n-1} \wedge  V_{k-2}(\af^{n-2})_+ $. Finally, since $\tau \circ \Sigma d = \eta \wedge 1$ is an isomorphism, we can conclude that $ C(\Sigma^{-1}\tau)\cong C(\Sigma d)\cong\afnz{n} \wedge \afnz{n-1} \wedge  V_{k-2}(\af^{n-2})_+$.

\end{proof}

Now, it only remains to prove $\mrm{PH}(k,n)$. This will require some delicate arguments that we start developing in the next section.

\subsection{Recollections on \MW cycle module}
\label{MWmod}
We can construct a t-structure $\delta$ on $\SH(S)$ \cite{bondarko2017dimensional}, whose heart can be understood using either homotopy modules \cite{deglise2011modules} or equivalently (cohomological) \MW(MW)-cycle modules (\cite{feld2020milnor} or \cite{deglise2022perverse}). 

We denote by $\mathcal{F}_S$ the category of fields essentially of finite type over $S$. A (cohomological) \MW cycle module is a functor from $\mathcal{F}_S$ to the category of $\bZ$-graded abelian groups $\mathrm{Abel}^{\bZ}$
\[M_*: \mathcal{F}_S \to \mathrm{Abel}^{\bZ} \] 
satisfying the axioms listed in \cite[\S 5.1]{deglise2022perverse}. The axioms that $M_*$ should satisfy include an action by Milnor-Witt $K$-theory $\KMW_*$, which is the fundamental example of a MW-cycle module. In the sequel, we will denote by $\mathrm{MWmod}^{coh}(S)$ the category of cohomological MW-cycle modules over $S$. We note that the dual notion of homological MW-cycle module is developed in \cite{deglise2022perverse}, but we choose the former notion to fit our computations.

For any $p: X\to S$ and any \MW cycle module $M_* \in \mathrm{MWmod}^{coh}(S)$, there is an associated Gersten complex $^{\delta}C^*(X, M_*)$, which actually defines a spectrum $ ^{\delta}\mathbf{H}M\in \SH(S)$ satisfying the following property 
\[
	{}^{\delta}A^i(X,M_j):= H^i({}^{\delta}C^*(X,M_j))\cong {}^{\delta}\mathbf{H}M^{i+j,j}(X).
\] 
The Gersten complex gives the heart of the t-structure $\delta$. Correspondingly, we can also get a homotopy module, i.e. a $\bZ$-graded strictly $\af^1$-invariant sheaf $\mathbb{M}_*: U\mapsto{}^{\delta}C^0(U, M_*)$.

The above cohomology theory $\Heta$ gives an example, corresponding to the MW-cycle module $\mathbf{W}_*$ associated to the Witt sheaf. Here, $\mathbf{W}_i(k)$ is just the equivalence classes of non-degenerate quadratic forms up to hyperbolic planes. Sometimes, we omit the ${}^{\delta}\mathbf{H}$ while identifying the MW-cycle module and its corresponding spectrum.

We can also pull back MW-cycle modules along morphisms of schemes $p:X\to S$, obtaining an object $p^*(M)_* \in \mathrm{MWmod}^{coh}(X) $ given by
\[p^*(M)_*(  (v\to X) \in \mathcal{F}_X) := M_*(v\to X\xr{p}S).\]
We say that such a MW-cycle module $p^*(M)_*$ over $X$ is \emph{(cohomologically) $S$-simple}.

Next, we would like to define twists of a given MW-cycle module with respect to a principal bundle. If $G$ is an algebraic group (or more generally a sheaf of groups over $S$) and $M_* \in \mathrm{MWmod}^{coh}(S)$, we want to consider actions of $G$ on $M_*$ which preserve all the constraints satisfied by $M_*$. In the framework of this thesis, we will only consider the following notion. A \emph{compatible action of $G$ on $M_*$}  is a linear action $A(F): G(F)\times M_*(F)\to M_*(F)$ for any $F\in \mathcal{F}_S$, subject to the condition that for any discrete valuation ring $\mathcal{O}_v$ over $S$ (i.e. endowed with a morphism $\mathcal{O}(S)\to \mathcal{O}_v$) with fraction field $K(v)$ and residue field $\kappa(v)$
\[
K(v) \xleftarrow{\xi} \mathcal{O}_v \xr{s} \kappa(v)
\]
the following diagram commutes
\begin{equation}\label{eqn:action}
\begin{tikzcd}
	{G( \mathcal{O}_v)\times M_*(K(v))} & {G(K(v))\times M_*(K(v))} & {M_*(K(v))} \\
	& {G(\kappa(v))\times M_{*-1}(\kappa(v),N_v^\vee)} & {M_{*-1}(\kappa(v),N_v^\vee)}
	\arrow["{\xi_*\times \mathrm{id}}", from=1-1, to=1-2]
	\arrow["{s_*\times \partial}"', from=1-1, to=2-2]
	\arrow["{A(\kappa(v))}", from=2-2, to=2-3]
	\arrow["{A(K(v))}", from=1-2, to=1-3]
	\arrow["\partial", from=1-3, to=2-3]
\end{tikzcd}
\end{equation}
where $N_v^\vee$ is the dual of the normal bundle to $\kappa(v)$ in $\mathcal O_v$, $\delta$ is the residue map of Axiom (D4) in \cite[\S 5.1]{deglise2022perverse}, and the action $A(\kappa(v))$ of $G(\kappa(v))$ on $M_{*-1}(\kappa(v),N_v^\vee)$ is the natural one induced by the action on $M_{*-1}(\kappa(v))$ (the action on the line bundle is trivial).

Given a principal $G$-bundle $P$ over $X$, we set 
\[ (M\circledast _A P)_* ((x:v\to X) \in \mathcal{F}_X):=  P_x\otimes^A_{G_x}p^*(M)_*(x). \]
Notice that $P_x$ is non-canonically isomorphic to $G_x$, and thus $(M\circledast_A P)_* (x)$ is also non-canonically isomorphic to $p^*(M)_*(x)$. A direct verification shows that the axioms of a cohomological MW-cycle module are verified (use \eqref{eqn:action} above for (D4)) and we therefore obtain a well-defined object $(M\circledast _A P)_*$ in the category of cohomological MW-cycle modules.

In the particular example where $G=\mathbb{G}_m$, we can make $G$ act on $M_*$ via the obvious map $\mathbb{G}_m\to \KMW_0$. If $L$ is a line bundle over $X$, it is an easy exercise to check that $M\circledast _A L$ is the usual cycle module twisted by $L$, with Gersten complex
\[
^{\delta}C_*(X, M\circledast_{A} L )={}^{\delta}C_*(X, M, L).
\]
A compatible action of $G$ on $M_*$ also yields an action on the Gersten complex asscociated to $M_*$. Indeed, for any $p,q\in\mathbb{Z}$ and any dimension function $\delta$, we obtain a string of maps for any open subscheme $U\subset X$
\[
G(U)\times\bigoplus_{\delta(x)=p} M_{p+q}(\kappa(x),\lambda_x)\xr{\oplus s_{x}\times \mathrm{id}}\bigoplus_{\delta(x)=p} G(\kappa(x))\times M_{p+q}(\kappa(x),\lambda_x)\xr{\oplus A(\kappa(x))}\bigoplus_{\delta(x)=p} M_{p+q}(\kappa(x),\lambda_x).
\]
The fact that Diagram \eqref{eqn:action} commutes precisely shows that this action commutes with the differentials of the complex, and we thus obtain an induced action
\[
G(U)\times {}^{\delta}C_*(U,M_q)\to  {}^{\delta}C_*(U,M_q)
\] 
for any $q\in\mathbb{Z}$. In particular, $G$ acts on the spectrum $^{\delta}\mathbf{H}M$ associated to $M_*$.
\begin{remark}
	We could also define the twist of a homotopy module $\mathbb{M}$ in a similar way. 
\end{remark}

An $\eta$-inverted cycle module $M_{\eta}$ is a MW-cycle module for which $\eta \in \KMW_{-1}(K)$ acts as an isomorphism. The MW-cycle module $\mathbf{W}$ is such an example and one can check that it is the tensor unit in the category of $\eta$-inverted cycle modules. More generally, we can define the $\eta$-localization of any given MW-cycle module $M$ as
$$M[\eta^{-1}]_*:=\mathbf{W}_* \otimes_{\KMW_*} M_* .$$
This is obviously an $\eta$-inverted cycle module, and one can check that this functor is adjoint with the inclusion of the category of $\eta$-inverted MW-cycles modules into the category of MW-cycle modules. 

Finally, let us observe that given an $\eta$-inverted cycle module $M_*$, we only need to consider $(M_{\eta})_0$. From now on, we only consider $\eta$-inverted MW-cycle modules.

\subsection{Recollections on the motivic Leray spectral sequences}

To compute cohomology using fiber sequences, we can use the motivic analogue of the Leray spectral sequence by making use of the t-structure $\delta$ \cite{asok2018homotopy}.

Let $H^p_{\delta}$ be the cohomology of the t-structure $\delta$, which takes value in the heart, i.e. MW-cycle modules. In general, for any spectrum $\mathbb{F}\in \SH(S)$, a spectral sequence can be obtained using the theory of spectral diagrams: 
\[ E_{2}^{p,q}(\mathbb{F})={}^{\delta}A^p(S,H_{\delta}^q \mathbb{F})\Rightarrow H^{p+q}(S,\mathbb{F})\]

Applying to the push-forward $f_*\mathbb{E}$ along a morphism $f:T \to B$, we obtain:
\begin{theorem}[\cite{asok2018homotopy},  Theorem 4.2.5]
	Let $f: T \to B $ be a morphism of schemes, and let $\mathbb{E}\in \SH(T)$ be a ring spectrum over $T$, we have a spectral sequence of the form:
	\[ E_{2}^{p,q}(f,\mathbb{E})={}^{\delta}A^p(B,H_{\delta}^q f_*\mathbb{E})\Rightarrow H^{p+q}(T,\mathbb{E})\]
where the differentials $d_r$ satisfy the usual Leibniz rule.
\end{theorem}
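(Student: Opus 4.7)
The plan is to derive the Leray spectral sequence by applying the general spectral sequence attached to the t-structure $\delta$ on $\SH(B)$ to the pushforward $f_*\mathbb{E}$. The proof proceeds in three steps: construct an abstract spectral sequence in $\SH(B)$ from the Postnikov tower of $\delta$, identify its abutment with cohomology on $T$ via adjunction, and install a multiplicative refinement.

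First, for any $\mathbb{F}\in\SH(B)$, I would form the Postnikov tower for $\delta$:
\[
\cdots \to \tau^\delta_{\leq q+1}\mathbb{F} \to \tau^\delta_{\leq q}\mathbb{F} \to \cdots,
\]
whose successive fibers are $(H^q_\delta\mathbb{F})[q]$ in the heart. Applying $[\Sigma^\infty_+ B,\,-]$ produces an exact couple, giving a spectral sequence
\[
E_2^{p,q}(\mathbb{F}) = {}^{\delta}A^p(B, H^q_\delta \mathbb{F}) \Rightarrow H^{p+q}(B, \mathbb{F}),
\]
where the $E_2$-term is identified via the recollection of the previous subsection: the heart of $\delta$ is the category of cohomological MW-cycle modules, whose cohomology on $B$ is by definition ${}^{\delta}A^*(B,-)$, computed by the Gersten complex.

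Next, I would specialize to $\mathbb{F}=f_*\mathbb{E}$. The adjunction $(f^*, f_*)$, combined with the fact that $f^*$ preserves the unit, yields $H^{p+q}(B, f_*\mathbb{E}) \cong H^{p+q}(T, \mathbb{E})$, giving the stated abutment.

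Finally, for the Leibniz rule, I would use that $f_*$ is lax symmetric monoidal, so $f_*\mathbb{E}$ inherits a ring structure in $\SH(B)$. To upgrade the spectral sequence to a multiplicative one, the Postnikov filtration on $f_*\mathbb{E}$ must be refined to a filtered ring object in a symmetric-monoidal enhancement of $\SH(B)$; once this is in place, each $E_r$ becomes a bigraded ring and the Leibniz identity on $d_r$ follows from the standard formalism. The main obstacle is precisely this last step: the multiplicative structure of the t-structure Postnikov filtration is not automatic from the triangulated data and has to be constructed coherently, e.g.\ by working in the underlying stable $\infty$-category and appealing to its compatibility with the symmetric monoidal structure on $\SH(B)$. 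This is the delicate content carried out in Asok--Fasel, and in practice I would invoke their construction rather than reprove it from scratch.
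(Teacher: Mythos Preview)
The paper does not prove this theorem; it is quoted from \cite{asok2018homotopy} and only preceded by a two-line sketch: one obtains a general spectral sequence for any $\mathbb{F}\in\SH(S)$ from the t-structure $\delta$ (``using the theory of spectral diagrams''), and then specializes to $\mathbb{F}=f_*\mathbb{E}$. Your proposal expands exactly this outline---Postnikov tower for $\delta$, adjunction to identify the abutment, lax monoidality of $f_*$ for the ring structure---so it is correct and matches the paper's (cited) approach; your honest remark that the multiplicative refinement is the delicate point and is ultimately imported from Asok--Fasel is appropriate, since the present paper does not reprove it either.
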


When $\mathbb{E}=f^*\mathbb{F}$ for some $ \mathbb{F}\in \SH(B) $, we have the non-canonical isomorphism
\[
(H^p_{\delta}f_*\mathbb{E})_q(b:v\to B)\cong H^{p+q}(T_b, b^!\mathbb{F}),
\] 
where $T_b := v\times_B T $.

However, computing $H^p_{\delta}f_*\mathbb{E}$ as a MW-cycle module is challenging in general. In our scenario, we focus on the locally trivial fibration 
\[
F\to T\xrightarrow{f} B
\] 
over $S$ using an $\eta$-inverted cycle module $M\in \mathrm{MWmod}^{coh}(S)$ as our coefficients. 

In classical computations, simple connectedness is used to further reduce $\mathbf{R}^q f_*\mathbb{Z}$ into the constant sheaf associated to $\mathrm{H}^q(F, \mathbb{Z})$ over $B$. However, additional care is necessary in the motivic setting. Instead, we show that $H_{\delta}^q f_*\mathbb{E}$ is (cohomologically) $S$-simple, i.e. $H_{\delta}^q f_*\mathbb{E}\cong p_{B}^*(N)$, for $N\in \mathrm{MWmod}^{coh}(S)$

\begin{lemma}
  \label{pushFwdSimple}
  Let $f:T\to B$ be a $\tau$-locally trivial fibration($\tau$ can be $Zar, Nis, et$), with the fiber $F$ such that $\mathrm{H}^*(F, \mathbf{W})$ is a flat $\mathrm{H}^*(S, \mathbf{W})$-module.
  If the non-abelian cohomology class $ c_f \in \mathrm{H}_{\tau}(B, \mathrm{Aut}(\mathrm{H}^q(F, \mathbf{W})\otimes_\mathbf{W} M)) $ induced by $f$ is trivial, then $H^q_{\delta}f_*M$ is simple.
\end{lemma}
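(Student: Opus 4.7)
The plan is to argue locally in the $\tau$-topology and then glue using the triviality of $c_f$. First I would choose a $\tau$-covering $\{U_i\to B\}$ on which the fibration $f$ is trivialized, so that $T_{U_i}:=T\times_B U_i\cong F\times_S U_i$. On each $U_i$, the base change formula together with the Künneth formula for MW-cycle modules allows one to compute the restriction of $f_*M$ as a smash product whose $q$-th $\delta$-cohomology sheaf splits as a tensor product. The flatness hypothesis on $\mathrm{H}^{*}(F,\mathbf{W})$ is exactly what is needed for the Künneth spectral sequence computing $H^*_\delta((F\times_SU_i)_+\wedge M)$ to degenerate, yielding a canonical isomorphism of MW-cycle modules over $U_i$
\[
(H^q_\delta f_*M)|_{U_i}\;\cong\;p_{U_i}^{*}\bigl(\mathrm{H}^q(F,\mathbf{W})\otimes_{\mathbf{W}}M\bigr).
\]

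Next I would organize the patching. On each double overlap $U_{ij}$, the two local trivializations of $T$ differ by an automorphism of $F\times_S U_{ij}$ (over $U_{ij}$), which induces an automorphism of the locally free MW-cycle module $\mathrm{H}^q(F,\mathbf{W})\otimes_{\mathbf{W}}M$. These automorphisms define a Čech $1$-cocycle with values in the sheaf $\mathrm{Aut}(\mathrm{H}^q(F,\mathbf{W})\otimes_{\mathbf{W}}M)$, and by construction its cohomology class is exactly $c_f$. The hypothesis that $c_f$ is trivial provides, after possibly refining the cover, compatible $0$-cochains trivializing this cocycle; post-composing the local Künneth isomorphisms above with these automorphisms produces local trivializations which agree on overlaps.

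By $\tau$-descent for MW-cycle modules (equivalently, for the heart of the t-structure $\delta$, which is an abelian sheaf category for the Nisnevich, Zariski and étale topologies), these glued isomorphisms assemble into a global isomorphism
\[
H^q_\delta f_*M \;\cong\; p_B^{*}\bigl(\mathrm{H}^q(F,\mathbf{W})\otimes_{\mathbf{W}}M\bigr),
\]
which exhibits $H^q_\delta f_*M$ as $S$-simple with associated cycle module $N=\mathrm{H}^q(F,\mathbf{W})\otimes_{\mathbf{W}}M$.

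The main obstacle I foresee is making the Künneth step genuinely functorial in MW-cycle modules rather than only at the level of abelian groups: the tensor product $\otimes_{\mathbf{W}}$ must be interpreted in $\mathrm{MWmod}^{coh}$, and one has to verify that the canonical Künneth map is an isomorphism of MW-cycle modules (and not merely of underlying graded groups) precisely when $\mathrm{H}^{*}(F,\mathbf{W})$ is flat over $\mathrm{H}^{*}(S,\mathbf{W})$. The fact that we work with $\eta$-inverted coefficients is crucial here, since the tensor structure on $\eta$-inverted MW-cycle modules behaves as in the classical setting over $\mathbf{W}$, and the automorphism sheaf in which $c_f$ lives is then the correct one to measure the obstruction.
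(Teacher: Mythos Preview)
Your proposal is correct and follows essentially the same route as the paper: trivialize $f$ on a $\tau$-cover, use the flatness hypothesis to identify the local $\delta$-cohomology via K\"unneth as the pullback of $\mathrm{H}^q(F,\mathbf{W})\otimes_{\mathbf{W}}M$, recognize the transition data on the overlap as a \v{C}ech $1$-cocycle whose class is $c_f$, and then use the triviality of $c_f$ to produce a coboundary and glue to a global isomorphism with $p_B^*(\mathrm{H}^q(F,\mathbf{W})\otimes_{\mathbf{W}}M)$. The only cosmetic difference is that the paper works with a single cover $U\to B$ and the diagonal $U\times_B U$ (phrasing the result as a twist $(\mathrm{H}^q(F,\mathbf{W})\otimes_{\mathbf{W}}M)\circledast_g P_T$ by a principal bundle), whereas you use the indexed \v{C}ech language $\{U_i\}$; the content is identical.
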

\begin{proof}
   
We begin by considering a $\tau$-covering $U \to B$ satisfying 
\[
U \times_B T \xrightarrow[\cong]{\chi} U \times F,
\] 
where $\chi$ is a $U$-morphism. The assumption that $\mathrm{H}^*(F, \mathbf{W})$ is a flat $\mathrm{H}^*(S, \mathbf{W})$-module gives  
\[
\mathrm{H}^*(V \times_S F, M) = \mathrm{H}^*(V, M) \otimes_{\mathrm{H}^*(S, \mathbf{W})} \mathrm{H}^*(F, \mathbf{W})
\] 
for any $V$ over $S$. In order to comprehend the MW-cycle module $H^p_{\delta}f_*M$, we use the trivialization $\chi: U\times_B T \to U\times F$ and the trivial $F$-bundle $f_U: U\times_B T \to U$. When we compare the trivializations induced on the two projections, namely $\mathrm{pr}_1,\mathrm{pr}_2:U\times_B U\to U$, we obtain the transformation morphism 
\[
g: U\times_B U\times F\xrightarrow{\mathrm{pr}_1^* \chi^{-1}} U\times_B U\times_B T\xrightarrow{\mathrm{pr}_2^* \chi}U\times_B U\times F.
\] 

We observe that the $\eta$-inverted cycle module $H^p_{\delta}f_{U*}M$ over $U$  is simple, as $\mathrm{H}^q(F_{(-)},M)\cong \mathrm{H}^q(F, \mathbf{W})\otimes_\mathbf{W} M $. Therefore, the $\eta$-inverted cycle module $ H^p_{\delta}f_*M $ over $B$ is locally simple. It follows from our assumption that, for a local (or more generally semi-local) $V\subset U$, $\mathrm{H}^q(V\times F,M)= \mathrm{H}^0(V,M)\otimes \mathrm{H}^q(F,\mathbf{W})$. Actually $H^p_{\delta}f_M$ is the twist $ (\mathrm{H}^q(F, \mathbf{W})\otimes_\mathbf{W} M)\circledast_{g} P_T $ for some  $\mathrm{Aut}(\mathrm{H}^q(F, \mathbf{W})\otimes_\mathbf{W} M)$ principal bundle $P_T$ over $B$. Moreover, $g$ induces an automorphism 
\[
g^*: \mathrm{H}^*(U\times_B U,M)\otimes \mathrm{H}^*(F,\mathbf{W})\to \mathrm{H}^*(U\times_B U,M)\otimes \mathrm{H}^*(F,\mathbf{W})
\] 
that provides the $\mathrm{Aut}(\mathrm{H}^q(F, \mathbf{W})\otimes_\mathbf{W} M)$ principal bundle $P_T$ gluing the cycle module $H^p_{\delta}f_*M$. And by definition the class $c_f:=[g^*]$.

Finally, if the class $c_f$ is trivial, then there is another automorphism, $h:\mathrm{H}^q(U\times F,M)\to \mathrm{H}^q(U\times F,M)$ such that $g^*=\mathrm{pr}_1^*h^{-1}\circ\mathrm{pr}_2^*h$, we may conclude that $P_T$ is indeed a trivial $\mathrm{Aut}(\mathrm{H}^q(F, \mathbf{W})\otimes_\mathbf{W} M)$ principle bundle, and thus since $\mathrm{H}^q(F, \mathbf{W})\otimes_\mathbf{W} M$ is simple, so is $H^q_{\delta}f_*M$.

\end{proof}
\begin{remark}
	Although Higher-order coherence data exists, in this instance, the $\mathrm{Aut}(\mathrm{H}^q(F, \mathbf{W})\otimes_\mathbf{W} M)$-principal bundle is the only variable necessary for consideration due to the cohomology groups landing in $\mathrm{Set}$. Later in our scenario, we will even reduce to the case of a $\Gm$-principal bundle, i.e. a line bundle.
\end{remark}

\subsection{Leray spectral sequence and Stiefel varieties}

In the above discussion, we only need to consider the case $M=\mathbf{W}$, so we may omit the coefficient $ \mathbf{W} $ for simplicity.

\subsection{First case of Stiefel varieties}
We begin with the special cases of the fiber sequences (\ref{fibSV}) for Stiefel varieties 
\[V_{k-1}(\af^{n-1})\to V_k(\af^n)\xr{f} \afnz{n}\]
\[f: \begin{bmatrix}
	a_1& \cdots & a_n\\
	&\ddots \\
	b_{j1}& \cdots&b_{jn}\\
	&&\ddots
	
\end{bmatrix}\mapsto [a_1,\cdots, a_n]=\mathbf{a}.\]
We follow the processes above for these fiber sequences. Note that 
\[
\mathrm{H}^*(S\times V_{k-1}(\af^{n-1}),\mathbf{W})=\mathrm{H}^*(S,\mathbf{W})\otimes \mathrm{H}^*(V_{k-1}(\af^{n-1}),\mathbf{W})
\] 
is already given by $\mathrm{PH}(k-1,n-1)$

First we consider the covering $U=\sqcup U_i \to \afnz{n}$ where $U_i=\{a_i\neq 0\}\cong \af^{i-1}\times \Gm \times \af^{n-i}$. The trivializations $\chi_i : U_i\times_{\afnz{n}} V_k(\af^n)\to U_i\times V_{k-1}(\af^{n-1})$ are given by the following matrix representations
\begin{eqnarray*}
	\begin{bmatrix}
		a_1\circ& \cdots& a_i* & \cdots  & a_n\circ\\
		&\ddots \\
		b_{j1}& \cdots&b_{ji}& \cdots&b_{jn}\\
		&&&\ddots
		
	\end{bmatrix}
	&\mapsto& \begin{bmatrix}
		a_1\circ& \cdots& a_i* & \cdots  & a_n\circ\\
		&\ddots \\
		b_{j1}-a_1 b_{ji}/a_i& \cdots& 0& \cdots&b_{jn}-a_n b_{ji}/a_i\\
		&&&\ddots
	\end{bmatrix}\\
	&=&([a_1\circ, \cdots, a_i* , \cdots,a_n\circ],\begin{bmatrix}
		
		\ddots \\
		b_{j1}-a_1 b_{ji}/a_i& \cdots& b_{jn}-a_n b_{ji}/a_i\\
		&\ddots
	\end{bmatrix}).
\end{eqnarray*}
Let $C(i)$ be the matrix $\begin{bmatrix}
		
	\ddots \\
	b_{j1}-a_1 b_{ji}/a_i& \cdots& b_{jn}-a_n b_{ji}/a_i\\
	&\ddots
\end{bmatrix}$.
We aim to understand the variation of $C(i)$ between two open subschemes, that is, between $U_i$ and $U_k$. We use the notation $U_{ik}=U_i\cap U_k$ with $i<k$. We compare the formulas for $C(i)$ and $C(k)$ to obtain the relation $C(i)G_{ik}(\mathbf{a})E_{ik}=C(k)$ where $E_{ik}$ is the permutation matrix of columns given by $(i\ i+1\ \cdots\ k-1)$ and
\[ G_{ik}(\mathbf{a})=\begin{bmatrix}
	1&&&&&&&&0 \\
	&\ddots\\
	&& 1\\
	&&&1\\
	&&&&\ddots\\
	0&&&&&1\\
	-a_1/a_k &\cdots &-a_{i-1}/a_k&-a_{i+1}/a_k& \cdots&-a_{k-1}/a_k& -a_i/a_k &\cdots & -a_n/a_k\\
	&&&&&&&\ddots
\end{bmatrix}.\]
Here, $G_{ik}(\mathbf{a})_{k-1,k-1}=-a_i/a_k$; for $j<i$, $G_{ik}(\mathbf{a})_{k-1,j}=-a_j/a_k$; for other $j$, $G_{ik}(\mathbf{a})_{k-1,j}=-a_{j+1}/a_k$, and otherwise $G_{ik}(\mathbf{a})_{pq}=\delta_{pq}$. From this, we obtain the transformation morphisms $g_{ik}: U_{ik}\times V_{k-1}(\af^{n-1})\to U_{ik}\times V_{k-1}(\af^{n-1})$

Up to $\af_1$-homotopy, $G_{ik}(\mathbf{a})\sim \mrm{diag}(-a_i/a_k,1,\ldots,1)$, and $\det(E_{ik})=(-1)^{k-i-1}$, hence $G_{ik}(\mathbf{a})E_{ik}\sim D((-1)^{k-i}a_i/a_k):=\mrm{diag}((-1)^{k-i}a_i/a_k,1,\ldots,1)$. 

\begin{remark}
	We observe that only the columns $i$ and $k$ are relevant, hence we only need to consider the transformation restricted to those columns. This crucial observation will be helpful in the sequel, when we will have to understand the general case.
\end{remark}

As we mentioned above, the transformation morphisms induce morphisms of cohomology groups, which are defined up to homotopy. Therefore, we obtain 
\[
g^*_{ik}=D((-1)^{k-i}a_i/a_k)^*:\mathrm{H}^q(U_{ik}\times V_{k-1}(\af^{n-1}))\to \mathrm{H}^q(U_{ik}\times V_{k-1}(\af^{n-1})).
\] 
To show that the torsor is trivial, define $h_i=D((-1)^{i}a_i^{-1}): U_{i}\times V_{k-1}(\af^{n-1})\to U_{i}\times V_{k-1}(\af^{n-1})$, and obviously, $g^*_{ik}=h_i^{-1*}h_k^*$.

In conclusion, applying lemma \ref{pushFwdSimple}, we have shown that $H_{\delta}^q f_*\mathbf{W} $ is the simple cycle module $\mathrm{H}^q(V_{k-1}(\af^{n-1}))\otimes \mathbf{W}$ on $\afnz{n}$.


\subsection{More general cases}
\label{trivialMono}

We can perform similar computations for all fiber sequences of the form

\[V_{k-l}(\af^{n-l})\to V_k(\af^n)\xr{f_l} V_l(\af^n),\]
\[f_l: \begin{bmatrix}
	\mathbf{a}_{L1}& \cdots & \mathbf{a}_{Ln}\\
	&\ddots \\
	b_{j1}& \cdots&b_{jn}\\
	&&\ddots
	\end{bmatrix}:=\begin{bmatrix}
	a_{11}& \cdots & a_{1n}\\
	&\ddots \\
	a_{l1}& \cdots & a_{ln}\\
	&\ddots \\
	b_{j1}& \cdots&b_{jn}\\
	&&\ddots
	\end{bmatrix}\mapsto
	\begin{bmatrix}
		\mathbf{a}_{L1}& \cdots & \mathbf{a}_{Ln}
	\end{bmatrix}
	=
\begin{bmatrix}
	a_{11}& \cdots & a_{1n}\\
	&\ddots \\
	a_{l1}& \cdots & a_{ln}\\
\end{bmatrix}.\]
We write respectively $\mathbf{a}_{Si}$ and $\mathbf{b}_{jS}$ for the column vector
$\begin{bmatrix}\vdots \\ a_{ti}\\ \vdots\end{bmatrix}$ and row vector 
$\begin{bmatrix}\cdots & b_{ti}& \cdots\end{bmatrix}$, where $t\in S$ denotes a subset of index.

Let $I$ be an $l$-subset of $\{1,\ldots,n\}$. Given a $l\times l$ matrix 
\[ \mathbf{A}_I =\begin{bmatrix}
	\cdots&\mathbf{a}_{Lt}& \cdots \\
	
\end{bmatrix}=\begin{bmatrix}
	\cdots&a_{1t}& \cdots \\
	&\ddots \\
	\cdots&a_{lt}& \cdots\\
\end{bmatrix} ,t\in I,
	\]
we have the open cover of $  V_l(\af^n) $ by $U=\sqcup U_I$ where $U_I=\{\det(\mathbf{A}_I)\neq0 \}\cong \GL_l$. We could refine the cover $U$ further to make it cohomologically trivial, but this does not affect the computations here. 

Now, let us consider the trivializations $\chi_I: f_l\inv(U_I)\to U_I\times  V_{k-l}(\af^{n-l})$, which are given by the following expression:
	\begin{eqnarray*}
		\begin{bmatrix}
			\mathbf{a}_{L1}\circ& \cdots & \mathbf{A}_I* & \cdots& \mathbf{a}_{Ln}\circ\\
			&\ddots \\
			b_{j1}& \cdots& \mathbf{b}_{jI}& \cdots&b_{jn}\\
			&&&\ddots
			\end{bmatrix}
			&\mapsto&
			\begin{bmatrix}
				\mathbf{a}_{L1}\circ& \cdots & \mathbf{A}_I* & \cdots& \mathbf{a}_{Ln}\circ\\
				&\ddots \\
				b_{j1}- \mathbf{b}_{jI}\mathbf{A}_I\inv \cdot \mathbf{a}_{L1}& \cdots& 0 & \cdots&b_{jn}- \mathbf{b}_{jI}\mathbf{A}_I\inv \cdot \mathbf{a}_{Ln}\\
				&&&\ddots
				\end{bmatrix}
				\\
			&=&(\begin{bmatrix}
				\mathbf{a}_{L1}\circ& \cdots & \mathbf{A}_I* & \cdots& \mathbf{a}_{Ln}\circ
				\end{bmatrix}, 
				\begin{bmatrix}
					
					\ddots \\
					\cdots&b_{js}- \mathbf{b}_{jI}\mathbf{A}_I\inv \cdot \mathbf{a}_{Ls} & \cdots\\
					&&\ddots
					\end{bmatrix})
				, s\notin I 
	\end{eqnarray*}

To compare two trivializations (up to homotopy) over two open subschemes $U_I$ and $U_K$, we only need to focus on the columns in the union $I\cup K$. Given the symmetry of the rows, we pick a row $j$ to construct the transformation from $\mathbf{c}_{jK}$ to $\mathbf{c}_{jI}$. Here $\mathbf{c}_{jK}=[\ldots\ b_{jk}- \mathbf{b}_{jI}\mathbf{A}_I\inv \mathbf{a}_{Lk} \  \ldots]_{k\in K, k\notin I} =P_{K\backslash I}(\mathbf{b}_{jK}-\mathbf{b}_{jI}\mathbf{A}_I\inv \mathbf{A}_{K})$ and $\mathbf{c}_{jI}=P_{I \backslash K}(\mathbf{b}_{jI}-\mathbf{b}_{jK}\mathbf{A}_K\inv \mathbf{A}_{I})$, where $P_S$ is the first projection to the subspace spanned by $S$ before embedding it into the entire row vector space $\af^{n-l}$.

Hence, the transformation from $\mathbf{c}_{jK}$ to $\mathbf{c}_{jI}$ is given by $ _{K\backslash I}|-\mathbf{A}_K\inv \mathbf{A}_{I}|_{I\backslash K}E_{IK}$, where $_S|\mathbf{A}|_T$ denotes the operation of selecting $S$-rows and $T$-columns of $\mathbf{A}$ before embedding it into $\GL_{n-l}$. Finally, $E_{IK}$ is the permutation that maps $K$-columns to $I$-columns.
 
Although we restrict the matrix to a portion, one simple observation is that $\mathbf{A}_I$ and $\mathbf{A}_K$ share some columns. By using a permutation $\epsilon$, we have 
\[
\mathbf{A}_{I}-\epsilon\mathbf{A}_K= \begin{bmatrix}
	\cdots&\mathbf{a}_{Lt}& \cdots & 0 & \cdots
	\end{bmatrix}
\] 
and therefore
\[
\mathbf{A}_K\inv\mathbf{A}_{I}=\epsilon+ \begin{bmatrix}
		\cdots& * & \cdots & 0 & \cdots
		\end{bmatrix}.
\] 
Consequently, we obtain 
		$ \det( _{K\backslash I}|\mathbf{A}_K\inv \mathbf{A}_{I}|_{I\backslash K})=\det(\mathbf{A}_K\inv \mathbf{A}_I) $.

Therefore the transformation $\mathbf{g}_{IK}$ can be expressed as $G_{IK}(\mathbf{A})E_{IK}\sim \mrm{diag}(\det(-\mathbf{A}_K\inv \mathbf{A}_I),1,\ldots,1)E_{IK}$. After some calculations, we obtain 
\[
\det(E_{IK})=\prod_{k_1<\ldots<k_l \in K,i_1<\ldots<i_l \in I}\det(E_{i_t k_t}) =(-1)^{\sum_{k_1<\ldots<k_l \in K,i_1<\ldots<i_l \in I} k_t-i_t-1}=(-1)^{(\sum_{k\in K} k -\sum_{i\in I} i) -l},
\] 
and finally, the transformation matrix up to homotopy is nothing but 
\[\mathbf{g}_{IK}\sim G_{IK}(\mathbf{A})E_{IK}\sim \mathbf{D}((-1)^{\sum_{k\in K} k -\sum_{i\in I} i}\det(\mathbf{A}_K)\inv \det(\mathbf{A}_I)).\]

Using the same method as above, we can define $\mathbf{h}_I= \mathbf{D}((-1)^{\sum_{i\in I} i}\det(\mathbf{A}_I)\inv)$ and $\mathbf{g}_{IK}^*=\mathbf{h}_I^{-1*}\mathbf{h}^*_K $ and we obtain the desired outcome.

Finally, we have shown as in the previous section that $H_{\delta}^q f_{l*}\mathbf{W} $ is the simple cycle module $\mathrm{H}^q(V_{k-1}(\af^{n-1}))\otimes \mathbf{W}$ on $V_{l}(\af^n)$.

\begin{remark}
	In Section \ref{SpSV}, we will demonstrate a conormal bundle that exhibits $\SL$-orientation, as supported also by this result.
\end{remark}

\subsection{Convergence of the relevant spectral sequences}
We now examine the following special cases of the fiber sequences (\ref{fibSV}):
\[ 
\afnz{n-k+1}\to V_k(\af^n)\to V_{k-1}(\af^n) 
\]
for $n-k$ even, and
\[ 
V_2(\af^{n-k+2})\to V_k(\af^n)\to V_{k-2}(\af^n) 
\]
for $n-k$ odd.

For $\Heta={}^{\delta}\mathbf{HW}$ we obtain when is $n-k$ even, 
{\small\[
E^{p,q}_2= \mathrm{H}^p(V_{k-1}(\af^{n}),\mathrm{H}^q(\afnz{n-k+1},\mathbf{W}))\cong \mathrm{H}^p(V_{k-1}(\af^{n}),\mathbf{W})\otimes_{\mathrm{H}^*(S,\mathbf{W})}\mathrm{H}^q(\afnz{n-k+1}),\mathbf{W})\Rightarrow \mathrm{H}^{p+q}(V_k(\af^n),\mathbf{W}). 
\]}
On the other hand, when $n-k$ is odd,
{\small\[E^{p,q}_2= \mathrm{H}^p(V_{k-2}(\af^{n}),\mathrm{H}^q(V_{2}(\af^{n-k+2}),\mathbf{W}))\cong \mathrm{H}^p(V_{k-2}(\af^n),\mathbf{W})\otimes_{\mathrm{H}^*(S,\mathbf{W})}\mathrm{H}^q(V_{2}(\af^{n-k+2}),\mathbf{W})\Rightarrow \mathrm{H}^{p+q}(V_k(\af^n),\mathbf{W}). \]}

It is important to note that the spectral sequences in the two cases collapse at the $E_2$-page.
\begin{proposition}
	\label{SSconverge}
If we assume that the conditions $\mrm{PH}(k-1,n)$ and $\mrm{PH}(k-2,n)$ hold, then $\mrm{PH}(k,n)$ also holds.
\end{proposition}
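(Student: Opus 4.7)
The plan is to show that both Leray spectral sequences stated above collapse at $E_2$, and then to read off $\mrm{PH}(k,n)$ from the resulting associated graded.

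The first task is to identify the $E_2$-page. The simplicity result of Lemma \ref{pushFwdSimple}, applied to the locally trivial fibrations $V_{k-l}(\af^{n-l})\to V_k(\af^n)\to V_l(\af^n)$ with $l\in\{1,2\}$, combined with the inductive hypotheses $\mrm{PH}(k-1,n)$ or $\mrm{PH}(k-2,n)$ and the cohomology of the fibers $\afnz{n-k+1}$ and $V_2(\af^{n-k+2})$ (known from Section \ref{secV2}), identifies $E_2^{p,q}$ with the tensor product $\Heta^{p}(\text{base})\otimes_{\Heta^{*}(S)}\Heta^{q}(\text{fiber})$. A generator-by-generator comparison shows that, as an $\Heta^{*}(S)$-module, this tensor product is already isomorphic to the algebra predicted by $\mrm{PH}(k,n)$.

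The heart of the proof is the collapse. By multiplicativity of the spectral sequence and the Leibniz rule, it suffices to verify that the algebra generators are permanent cycles. The base generators sit in $E_2^{*,0}$ and are automatically permanent cycles. The only nontrivial check is for the new generator coming from the fiber: $\gamma_{n-k}\in E_2^{0,n-k}$ in the even case, and the top class of $V_2(\af^{n-k+2})$ in degree $2n-2k+1$ in the odd case. For each, the only possibly nonzero differential is the transgression, landing in $E_r^{n-k+1,0}$ or $E_r^{2n-2k+2,0}$, a subquotient of the cohomology of the base in that degree. Splitting off $\Heta^{*}(S)$ by means of the obvious rational point of $V_{k-l}(\af^n)$ (the first $k-l$ standard basis vectors), the transgression takes values in the reduced cohomology of the base in the corresponding degree. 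Using the explicit generating set from $\mrm{PH}(k-l,n)$ and the parity hypothesis on $n-k$, one checks that no monomial of base generators lands in the transgression degree: the $\gamma$ class of the base is absent by the opposite parity of $n-(k-l)$, every $\beta$-generator is pushed into strictly higher degree by the constraint $n-(k-l)<2j$, and $\alpha_{n-1}$ is either absent (by parity of $n$) or of a non-matching degree once $k\geq 3$.

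The main obstacle is precisely this degree bookkeeping, which has to be carried out with care when the Witt cohomology of $S$ is not concentrated in degree zero; for a general $S$, one supplements the argument by base change to a field. A cleaner alternative is to construct explicit lifts of the fiber generators to classes on $V_k(\af^n)$, for instance via the canonical fiber embedding $V_2(\af^{n-k+2})\hookrightarrow V_k(\af^n)$ on the last rows together with a Borel-type loop-space construction producing the $\beta$-classes; the edge map of the Leray spectral sequence then forces these classes to survive to $E_\infty^{0,*}$, giving a direct proof that the fiber generators are permanent cycles. In either case, once collapse is established, the isomorphism $E_\infty\cong \mrm{gr}\,\Heta^{*}(V_k(\af^n))$ matches the module structure claimed in $\mrm{PH}(k,n)$, completing the induction.
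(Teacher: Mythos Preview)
Your proposal is correct and follows essentially the same approach as the paper: reduce to the base field via the K\"unneth formula, use the Leibniz rule to reduce collapse to the fiber generator, and then observe that the transgression target in the base cohomology is zero by a degree/parity check against the generator list furnished by $\mrm{PH}(k-1,n)$ or $\mrm{PH}(k-2,n)$. The paper's own proof is terser---it simply draws the $E_2$ diagrams for the two parities, marks the transgression target as $0$, and says ``the claim follows''---while you spell out the bookkeeping that justifies that $0$; your suggested alternative via explicit lifts is not used in the paper.
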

\begin{proof}
	While we showed this for the case $k=2$, we assume $k>2$ here. 

	With the statement of $\mrm{PH}(k,n)$, we know that $\Heta^*$ forms a free algebra. Thus, using the Kunneth formula, we only need to demonstrate when $S=\Spec K$. In order to prove that these spectral sequences collapse at the $E_2$-page, it is enough to examine the generators, and show that all differentials degenerate. For this, recall that the differentials in the spectral sequence satisfy Leibnitz rule. If $n-k$ is even, we obtain

\[\small
\begin{tikzcd}[sep=tiny]
	{n-k} & {\gamma_{n-k}} && 0 && {\alpha_{n-1}\gamma_{n-k}} && {\beta_{4j-1}\gamma_{n-k}} \\
	\vdots \\
	0 & 1 && 0 && {\alpha_{n-1}} && {\beta_{4j-1}} \\
	& 0 & \cdots & {n-k+1} & \cdots & {n-1 (for\ n\ even)} &\cdots& {4j-1(n-k+1<2j<n)} & \cdots
	\arrow["{d_2}", from=1-2, to=3-4]
\end{tikzcd}\]
while for $n-k$ odd, we have
\[ \small
	\begin{tikzcd}[sep=tiny]
	{2(n-k)+1} & {\beta_{2(n-k)+1}} && 0 && {\alpha_{n-1}\beta_{2(n-k)+1}} && {\beta_{4j-1}\beta_{2(n-k)+1}} \\
	\vdots \\
	0 & 1 && 0 && {\alpha_{n-1}} && {\beta_{4j-1}} \\
	& 0 & \cdots & {2(n-k)+2} & \cdots & {n-1 (for\ n\ even)} & \cdots & {4j-1(n-k+2<2j<n)} & \cdots
	\arrow["{d_2}", from=1-2, to=3-4]
\end{tikzcd}\]
The claim follows.
\end{proof}

\section{Multiplicative Structures of Cohomology Rings}
\label{ring}

In this section, we will study the multiplicative structures and complete the proof of the computations of the cohomology rings $\HH^{*,*}$ and $\Heta^{*}$. As mentioned earlier, we only need to focus on the cases where the base is over the field $K$. For our purposes, let us introduce another index of degree: $(p,\{q\}):=(p+q,q)$.

\begin{remark}
\label{HHvainish}	
We recall from \cite[\S 1.3]{bachmann2020milnor} that for $\HH$, we have:
\[ \HH^{p,\{q\}}(K)=\begin{cases}
	0 &\text{if $p>0$.}\\
	\KMW_q(K) &\text{if $p=0$.} \\
	0 & \text{if $p<0$ and $q\leq 0$.}\\
\end{cases} \]

Furthermore, for $\Heta$ we obtain:
\[ \Heta^{p,\{q\}}(K)=\Heta^{p}(K)=\begin{cases}
	0 &\text{if $p\neq 0$}\\
	\mathrm{W}(K) &\text{if $p=0$} \\
\end{cases} \]
\end{remark}

More generally, we say that a cohomology theory $E$ is \textbf{coconnected} if $E^{p,\{q\}}(K)=0 $ for $p>0$.

\subsection{Basic examples}

Let us now consider the basic case of $\afnz{n}$ for some $n\geq 1$. 

\begin{lemma}
	\label{ringAn}
	Following the same notation as in Example \ref{caseAn}, for the coconnected cohomology theory $E$
	we have a ring isomorphism 
	$$E^{*,*}(\Gm)\cong E^{*,*}(K)[\theta_1]/(\theta_1^2-[-1]\theta_1) $$
	and for $n>1$
	$$E^{*,*}(\afnz{n})\cong E^{*,*}(K)[\theta_n]/(\theta_n^2). $$
\end{lemma}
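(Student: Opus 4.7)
The plan is to leverage the additive splitting from Example \ref{caseAn}, namely $E^{*,*}(\afnz{n}) \cong E^{*,*}(K)\cdot 1 \oplus E^{*,*}(K)\cdot \theta_n$, so that the entire ring structure is encoded in the single product $\theta_n^2 \in E^{4n-2,2n}(\afnz{n})$. I would decompose it according to the splitting as $\theta_n^2 = a + b\theta_n$ with $a \in E^{4n-2,2n}(K)$ and $b \in E^{2n-1,n}(K)$, and then identify both coefficients separately using (i) a basepoint trick and (ii) coconnectedness of $E$.

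First I would kill $a$ by pulling back along the $K$-rational basepoint $x:\Spec K \to \afnz{n}$ used to fix the stable splitting $\Sigma^{\infty}(\afnz{n})_+ \simeq \mathbb{S} \vee S^{2n-1,n}$. By construction of $\theta_n$ as the unit of the second wedge summand, one has $x^*\theta_n = 0$, so $0 = x^*(\theta_n^2) = a$, reducing matters to $\theta_n^2 = b\theta_n$. Rewriting $b \in E^{n-1,\{n\}}(K)$ in the $(p,\{q\})$-convention introduced just above, coconnectedness of $E$ immediately forces $b = 0$ as soon as $n-1 \geq 1$, i.e.\ whenever $n \geq 2$. This already settles the second isomorphism $E^{*,*}(\afnz{n}) \cong E^{*,*}(K)[\theta_n]/(\theta_n^2)$.

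The hard part will be the case $n = 1$, where $b \in E^{0,\{1\}}(K)$ is no longer controlled by coconnectedness and must be identified with $[-1]$. My plan is to invoke the universality of the motivic sphere: the unit map $\mathbb{S}_K \to E$ induces a morphism of bigraded rings $\pi^{*,*}(-) \to E^{*,*}(-)$ sending the tautological class $[t] \in \pi^{1,1}(\Gm)$ to $\theta_1$ and $[-1] \in \pi^{1,1}(\Spec K) = \KMW_1(K)$ to the element $[-1] \in E^{1,1}(K)$ appearing in the statement. It therefore suffices to verify $[t]^2 = [-1][t]$ already at the level of Milnor-Witt K-theory. This in turn follows from the standard $\KMW$ identity $[t][-t] = 0$, combined with the expansion $[-t] = [-1] + \aBra{-1}[t]$ and the $\epsilon$-graded commutativity $[t][-1] = -\aBra{-1}[-1][t]$: plugging in yields $0 = -\aBra{-1}[-1][t] + \aBra{-1}[t]^2$, and since $\aBra{-1}$ is a unit with $\aBra{-1}^2 = 1$, we conclude $[t]^2 = [-1][t]$. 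Applying the unit map then transports the relation to $E^{*,*}(\Gm)$, giving the first isomorphism.

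The main obstacle is precisely this last step: coconnectedness is a clean bookkeeping tool when $n \geq 2$, but for $n=1$ it leaves one scalar $b$ undetermined, and pinning it down requires reaching into the algebra of $\KMW$ and using that the sphere spectrum is the initial ring spectrum. All other steps are essentially formal consequences of the additive splitting.
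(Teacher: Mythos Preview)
Your argument is correct and follows essentially the same decomposition $\theta_n^2 = a + b\theta_n$ as the paper. Two minor differences: for $n>1$ the paper dispenses with your basepoint trick, since coconnectedness already kills $a \in E^{2n-2,\{2n\}}(K)$ directly (as $2n-2>0$), handling both coefficients in one stroke; and for $n=1$ the paper simply cites Morel rather than unpacking the $\KMW$ identity $[t]^2=[-1][t]$ as you do, so your treatment of the $\Gm$ case is in fact more self-contained than the paper's.
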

\begin{proof}
	From Example \ref{caseAn}, we already know that this is an isomorphism of modules. 
	For $\Gm$, this is well-known \cite{morel2012a1}.
	For $n>1$, we only need to show $\theta_n^2=0$. Let $\theta_n^2= a + b \theta_n$, where $a\in E^{2n-2,\{2n\}}(K)$ and $ b\in E^{n-1,\{n\}}(K)$. By the definition of coconnectedness, we have $a=0$ and $b=0$.
\end{proof}

Thus this result can be applied to both $\HH$ and $\Heta$. 

With the same argument, we can show that $\Heta^*(V_2(\af^{2k+1}))\cong \Heta^*(K)[\beta_{4k-1}]/(\beta_{4k-1}^2)$, where $\beta_{4k-1} \in \Heta^{4k-1}$.

\subsection{Multiplicative Structures of $\Heta^*(V_k(\af^n))$}
We now aim to show that the multiplicative structure of $\Heta^*(V_k(\af^n))$ is as simple as can be. As mentioned earlier, over $K$, the coefficient ring $\Heta^*(K)$ is concentrated in degree $0$ with $\Heta^0(K)=\mathrm{W}(K)$.

\begin{proof}[Proof of the multiplicative structure in Theorem \ref{SVCohMain}]
The result is clear for all $n$ when $k=1$ and for $n$ odd when $k=2$. We follow the notations in the proof of Proposition \ref{SSconverge}. We just need to show inductively that the new generators satisfy $\beta_{2(n-k)+1}^2=0$ or $\gamma_{n-k}^2=0$ (except $\gamma_0^2=[-1]\gamma_0$). The degrees of $\beta_{2(n-k)+1}^2$ and $\gamma_{n-k}^2$ are too small to be products of two or more other generators. In other words, $\beta_{2(n-k)+1}^2$ and $\gamma_{n-k}^2$ must be of the form $ c\alpha_{n-1} \text{(for $n$ even)} +\sum c_j\beta_{4j-1}$. These must be equal to $0$ (or $\gamma_0^2=[-1]\gamma_0$) since the degree of the former is even while the degree of the latter is odd and we apply Remark \ref{HHvainish}.
\end{proof}

\subsection{Symplectic Stiefel Varieties}
\label{SpSV}

Following the same idea, we can define the symplectic Stiefel varieties $\Sp V_{2k}(\af^{2n}) \subset V_{2k}(\af^{2n})$ as the varieties of symplectic morphisms $\rHom_{Sym}(\af^{2k},\af^{2n}) $, where we give $\af^{2n}$ the canonical symplectic structure $\omega$. Similar to the case of Stiefel varieties, we can also identify symplectic Stiefel varieties as homogeneous spaces:
\[\Sp V_{2k}(\af^{2n}) \cong \Sp_{2n}/\Sp_{2(n-k)}.\]

We will show that it is enough to "add one row" to get $\Sp V_{2k+2}(\af^{2n})$ out of $\Sp V_{2k}(\af^{2n})$.

\begin{lemma}
	Let $f:V_{2k+1}(\af^{2n})\to V_{2k}(\af^{2n})$ be defined by taking the first $2k$ rows, and let $Y:= f^{-1}(\Sp V_{2k}(\af^{2n}))$. Let $g:\Sp V_{2k+2}(\af^{2n})\to Y$ be defined by taking first $2k+1$ rows, and let $X := g(\Sp V_{2k+2}(\af^{2n})) $. Then $X$ is a retract of $Y$, and $g: \Sp V_{2k+2}(\af^{2n}) \to X$ is an affine bundle over $X$. In particular, we have the homotopy equivalence $\Sp V_{2k+2}(\af^{2n})\cong Y$. 
\end{lemma}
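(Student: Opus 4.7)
The plan is to identify $X$ explicitly as the locus in $Y$ where the $(2k+1)$-st row is symplectically orthogonal to the first $2k$ rows, then realize the retraction $Y \to X$ via symplectic Gram-Schmidt on the last row, and finally analyze $g$ fiberwise. First I would note that a point of $Y$ is a tuple $(v_1,\ldots,v_{2k+1})$ with $(v_1,\ldots,v_{2k})$ a symplectic $2k$-frame and $v_{2k+1}$ merely linearly independent from the first $2k$. A point of $\Sp V_{2k+2}(\af^{2n})$ is a symplectic $(2k+2)$-frame, so its image $X$ under $g$ consists of those $(v_1,\ldots,v_{2k+1})\in Y$ satisfying the additional conditions $\omega(v_{2k+1},v_i)=0$ for $i\leq 2k$. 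Conversely, any such tuple extends to a full symplectic frame (a standard linear-algebra fact, algebraic in the entries), so $X$ is precisely cut out in $Y$ by these $2k$ linear equations in the entries of $v_{2k+1}$.

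Next I would construct the retraction $r\colon Y\to X$. Given $(v_1,\ldots,v_{2k+1})\in Y$, set $\tilde v_{2k+1}=v_{2k+1}-\sum_{i=1}^{2k}c_i v_i$, where the $c_i$ are the unique solutions of the linear system $\omega(\tilde v_{2k+1},v_j)=0$ for $j\leq 2k$. Because $(v_1,\ldots,v_{2k})$ is a \emph{symplectic} frame, the Gram matrix $(\omega(v_i,v_j))_{i,j\leq 2k}$ is the constant invertible matrix $J_{2k}$, so the $c_i$ are uniquely determined linear combinations of the $\omega(v_{2k+1},v_j)$. This gives a scheme morphism $r\colon Y\to X$ with $r\circ i=\mathrm{id}_X$ for the inclusion $i\colon X\hookrightarrow Y$, and the assignment $(v_1,\ldots,v_{2k+1})\mapsto (r(v_1,\ldots,v_{2k+1}),(c_1,\ldots,c_{2k}))$ gives an isomorphism $Y\cong X\times \af^{2k}$. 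In particular $r$ and its section $i$ are $\af^1$-weak equivalences.

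Finally I would verify that $g\colon \Sp V_{2k+2}(\af^{2n})\to X$ is an affine bundle. Its fiber over $(v_1,\ldots,v_{2k+1})\in X$ consists of vectors $v_{2k+2}\in \af^{2n}$ satisfying $\omega(v_{2k+2},v_i)=0$ for $i\leq 2k$ and $\omega(v_{2k+2},v_{2k+1})=1$; linear independence of $v_{2k+2}$ from $(v_1,\ldots,v_{2k+1})$ is automatic, since membership in their span would force the last pairing to vanish. These $2k+1$ affine-linear conditions are given by linearly independent linear forms (by nondegeneracy of $\omega$ and linear independence of $v_1,\ldots,v_{2k+1}$), so each fiber is an affine space of dimension $2n-2k-1$, and local sections (hence Zariski local triviality) can be produced by the same Gram-Schmidt procedure. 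Thus $g$ is an affine bundle, whence an $\af^1$-weak equivalence, and the composition $\Sp V_{2k+2}(\af^{2n})\xrightarrow{g}X\hookrightarrow Y$ is also an $\af^1$-weak equivalence.

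The main subtlety I expect is ensuring that the symplectic Gram-Schmidt procedure defines an honest scheme morphism with constant-rank coefficient matrix; this is precisely where the symplectic (rather than merely linearly independent) hypothesis on $(v_1,\ldots,v_{2k})$ is essential, as it makes the relevant Gram matrix the constant invertible matrix $J_{2k}$, avoiding any need to invert nonconstant determinants or to pass to open subsets.
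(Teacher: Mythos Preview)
Your proposal is correct and follows essentially the same approach as the paper: identify $X$ as the locus where the $(2k+1)$-st row is symplectically orthogonal to the first $2k$, build the retraction by a Gram--Schmidt subtraction (the paper phrases this via its Matrix Retraction lemma with elementary transformations), and check fiberwise that $g$ is an affine bundle. Your argument is in fact slightly sharper in that you exhibit a global trivialization $Y\cong X\times\af^{2k}$, whereas the paper only records the retract; conversely the paper is a bit more explicit about local triviality of $g$, working on the opens $U_j=\{x_j\neq 0\}$ to write down a section, which is exactly what your ``same Gram--Schmidt procedure'' remark would unpack to.
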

\begin{proof}
  First, we notice that $X \subset Y$ consists of matrices that the $ 2k+1 $-th row is symplectic orthogonal to the other rows. For any matrix $ \mathbf{A}\in Y $, we can find a composition of elementary transformations $T(\mathbf{A})$ such that $ \mathbf{A}T(\mathbf{A})\in X $. By applying lemma \ref{Mretract}, this gives a retraction, thus $Y\cong X$.

  Then we can verify the claim locally. Let $ 1\leq j \leq n $ and $U_j\subset X$ be the open subvariety such that if $\mathbf{A}\in U_j$, let $\mathbf{A}_{2k+1}=\mathbf{x}$ be the $(2k+1)$-st row of $\mathbf{A}$, then the $j$-th entry of $\mathbf{x}$ is non-zero, $x_j\neq 0$.
	
  For the matrix $\mathbf{B}$ in the fiber $g^{-1}(\mathbf{A})$, the $(2k+2)$-nd row of $\mathbf{B}$ is of the form $\mathbf{y}+c_{2k+1}\mathbf{x}+ \mathbf{z}$, where $y_{n+j}=x_j^{-1}$, otherwise $y_i=0$, and $\mathbf{z}\bot_{\omega}\{\mathbf{x},\mathbf{y}\}$. This shows that the fiber should also be an affine space.
\end{proof}

Notice that when we take $k=0$ this shows that $\Sp V_2(\af^{2n})\cong \afnz{2n}$.

In the meantime, $Y$ can also be understood as the complement of a subbundle in a vector bundle over $\Sp V_{2k}(\af^{2n})$: Let $\Sp V_{2k}(\af^{2n})\times \af^{2n}$ denote the free addition of the $(2k+1)$-st row, and define the embedding
$$i: \Sp V_{2k}(\af^{2n})\times \af^{2k} \to \Sp V_{2k}(\af^{2n})\times \af^{2n}$$
 by simply

\[(\mathbf{A} , \mathbf{x} )\mapsto
\begin{bmatrix}
	\mathbf{A}\\
	\mathbf{A}\mathbf{x}
\end{bmatrix}.
\]

Clearly, $Y$ is isomorphic to the complement of the image of $i$. Moreover, we have the following localization cofiber sequence:

$$ \Sp V_{2k+2}(\af^{2n})\cong Y \to \Sp V_{2k}(\af^{2n}) \to  \mathrm{Th}(N_i) $$

where $N_i$ is the normal bundle of $i$. We claim that $N_i$ is $\SL$-oriented: in fact, $\mathrm{det}(N_i)$ is exactly the torsor we computed in Section \ref{trivialMono}, which is trivial. Since $\HH$ is $\SL$-oriented, in the computation of $\HH $ we can choose an orientation 
$$\tau \in \HH^{2n-2k,\{2n-2k\}}(\Sigma\afnz{2n-2k})\hookrightarrow \HH^{2n-2k,\{2n-2k\}}( \mathrm{Th}(N_i)), $$

and we can identify  $\HH^{p,\{q\}}( \mathrm{Th}(N_i)) $ with $ \HH^{p-2(n-k),\{q-2(n-k)\}}(\Sp V_{2k}(\af^{2n}))\tau$.

We now follow the same idea of \cite[Proposition 5]{williams2012motivic} to show the following theorem:

\begin{theorem}
	\label{VSpHmw}

The graded ring $\HH^{*,*}(\Sp V_{2k}(\af^{2n}))$ is a free $\HH^{*,*}(S)$-square-zero algebra generated by the classes $ \beta_j\in \HH^{2j-1,\{2j\}}(\Sp V_{2k}(\af^{2n})) $ for $n-k < \ldots \leq n$. In other words, we have an isomorphism of rings:

\[  \HH^{*,*}(\Sp V_{2k}(\af^{2n})) \cong \HH^{*,*}(K)[\beta_{n-k+1},\ldots,\beta_n]/(\beta_{n-k+1}^2,\ldots,\beta_n^2).\]

\end{theorem}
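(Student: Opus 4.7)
The plan is an induction on $k$. The base case $k=0$ is trivial since $\Sp V_0(\af^{2n})$ is a point, and $k=1$ follows from $\Sp V_2(\af^{2n})\cong \afnz{2n}$ together with Lemma~\ref{ringAn}. For the inductive step I would start from the cofiber sequence
$$\Sp V_{2k+2}(\af^{2n}) \to \Sp V_{2k}(\af^{2n}) \to \mathrm{Th}(N_i)$$
already established in this subsection. Because $N_i$ is $\SL$-oriented and $\HH$ is $\SL$-oriented, the Thom isomorphism $\HH^{p,q}(\mathrm{Th}(N_i))\cong \HH^{p-4(n-k),q-2(n-k)}(\Sp V_{2k}(\af^{2n}))$ converts the induced long exact sequence into one whose Gysin map is multiplication by the Euler class $e\in \HH^{4(n-k),2(n-k)}(\Sp V_{2k}(\af^{2n}))$.

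The first key step is to prove $e=0$ by bidegree analysis. In $(p,\{q\})$-notation, $e$ lives in $\HH^{2(n-k),\{2(n-k)\}}(\Sp V_{2k}(\af^{2n}))$. Using the induction hypothesis, this group decomposes as $\bigoplus_I \HH^{a,\{b\}}(K)\cdot\beta_I$ with $I\subseteq\{n-k+1,\ldots,n\}$ and $\beta_I$ in $(p,\{q\})$-bidegree $(2\sum_{i\in I}i-|I|,\{2\sum_{i\in I}i\})$. Matching bidegrees requires $\sum_{i\in I}i\leq n-k$, which is impossible for $I\neq\emptyset$ because each $i\geq n-k+1$; the remaining case $I=\emptyset$ would need $\HH^{2(n-k),\{2(n-k)\}}(K)$, which vanishes by Remark~\ref{HHvainish}. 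Hence $e=0$ and the long exact sequence breaks into short exact sequences
$$0\to \HH^{*,*}(\Sp V_{2k}(\af^{2n})) \to \HH^{*,*}(\Sp V_{2k+2}(\af^{2n})) \to \HH^{*+1,*}(\mathrm{Th}(N_i))\to 0.$$
Lifting the unit of the quotient (identified with $1 \in \HH^{0,0}(\Sp V_{2k})$ via the Thom isomorphism) produces a class $\beta_{n-k}\in \HH^{4(n-k)-1,2(n-k)}(\Sp V_{2k+2}(\af^{2n}))$ and the module decomposition $\HH^{*,*}(\Sp V_{2k+2})\cong \HH^{*,*}(\Sp V_{2k})\oplus \beta_{n-k}\cdot \HH^{*,*}(\Sp V_{2k})$, which settles the additive part of the claim.

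For the multiplicative structure, the relations $\beta_j^2=0$ with $j>n-k$ transport via naturality of the pullback from $\Sp V_{2k}(\af^{2n})$, so only $\beta_{n-k}^2=0$ requires new work. Writing $\beta_{n-k}^2=a+\beta_{n-k}\cdot b$ in the module decomposition, the bidegrees force $a\in \HH^{4(n-k)-2,\{4(n-k)\}}(\Sp V_{2k}(\af^{2n}))$ and $b\in \HH^{2(n-k)-1,\{2(n-k)\}}(\Sp V_{2k}(\af^{2n}))$. A bidegree analysis analogous to the one for $e$ shows that both groups vanish: for any nonempty $I$ the constraint $i\geq n-k+1$ contradicts the required value of $\sum_{i\in I}i$, and the empty-$I$ contribution is ruled out by Remark~\ref{HHvainish}. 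Hence $\beta_{n-k}^2=0$, completing the inductive step. The main obstacle is making these bidegree vanishings explicit and systematically tracking the switches between standard and $(p,\{q\})$-bidegrees; everything else is formal manipulation of the split short exact sequences.
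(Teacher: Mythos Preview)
Your proposal is correct and follows essentially the same approach as the paper: induction on $k$, the localization cofiber sequence, the vanishing of the Gysin map (your Euler class $e$ is exactly the paper's $i_*(\tau)$) via a bidegree count against Remark~\ref{HHvainish}, and then $\beta_{n-k}^2=0$ by another bidegree count. The only cosmetic difference is that you organize the square-zero argument via the module decomposition $\beta_{n-k}^2=a+\beta_{n-k}b$, whereas the paper first observes that the $\{q\}$-degree $4(n-k)$ is below that of any product of two distinct generators and writes $\beta_{n-k}^2=\sum_j c_j\beta_j$; both reduce to the same vanishing of coefficients in $\HH^{*,*}(K)$.
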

\begin{proof}
We will prove this result inductively. For the base case $k=1$, the result is already known.

Using the localization cofiber sequence, we obtain the long exact sequence

\[ \to \HH^{p-2(n-k),\{q-2(n-k)\}}(\Sp V_{2k}(\af^{2n}))\tau \xr{i_*} \HH^{p,\{q\}}(\Sp V_{2k}(\af^{2n})) \to \HH^{p,\{q\}}(\Sp V_{2k+2}(\af^{2n}))\to \]

First, we will show that $i_*=0$, which is equivalent to showing that $ i_*(\tau)=0 \in \HH^{2(n-k),\{2(n-k)\}}(\Sp V_{2k}(\af^{2n}))$. Instead, we will demonstrate that $ \HH^{2(n-k),\{2(n-k)\}}(\Sp V_{2k}(\af^{2n})) = 0 $.

By our inductive hypothesis, all the generators of $\HH^{*,*}(\Sp V_{2k}(\af^{2n}))$ have degree at least $ (2(n-k)+1, \{2(n-k)+2 \}) $. Therefore, for any $c \in  \HH^{2(n-k),\{2(n-k)\}}(\Sp V_{2k}(\af^{2n}))$, we can write $c =\sum c_I \beta_I$. Moreover, by Remark \ref{HHvainish}, all the coefficients $c_I \in \HH^{p,\{q\}}(K)=0$ since $p,q<0$.

\begin{remark}
	
	This argument cannot be applied to $\HH$ of Stiefel varieties, as in that case, it is possible for $c$ to be of the form $c= c_0 \gamma_{n-k}$, where $c_0\neq 0 \in \HH^{0,\{-1\}}(K)$.

\end{remark}

This gives us the following module isomorphism:

\[ \HH^{*,*}(\Sp V_{2k+2}(\af^{2n})) \cong  \HH^{*,*}(\Sp V_{2k}(\af^{2n})) \oplus \HH^{*,*}(\Sp V_{2k}(\af^{2n}))\beta_{n-k} \]

where $\beta_{n-k} = \Omega \tau \in \HH^{2(n-k)-1,\{2(n-k)\}}(\Sp V_{2k+2}(\af^{2n}))$. This shows the additive structure of the theorem.

To determine the multiplicative structure, we only need to show that $ \beta_{n-k}^2=0 \in \HH^{4(n-k)-2,\{4(n-k)\}}(\Sp V_{2k+2}(\af^{2n}))$. By considering the degrees as before, we see that the degree of $ \beta_{n-k}^2$ is too small to be the product of two or more generators. Therefore, $ \beta_{n-k}^2=\sum c_j \beta_j $, where $ c_j\in \HH^{4(n-k)-2j-1, \{4(n-k)-2j\}} $. Applying Remark \ref{HHvainish}, if $  4(n-k)-2j > 0 $, then $ 4(n-k)-2j-1>0 $, and we have $c_j=0$. Similarly, if $4(n-k)-2j \leq 0 $, then $ 4(n-k)-2j-1<0 $, and again we have $c_j=0$.
\end{proof}

\section{Appendix}
\label{appendix}

A crucial lemma for matrix computations is that the elementary matrices induce $\af^1$-homotopies that allow to construct retractions. Our aim in this appendix is to give a general unified framework for the constructions we need in the main body of this article. For any $n\in\mathbb{N}$, we set $M_n:=\af^{n\times n}$ and we consider the matrix multiplication 
\[
M_n\times M_n\to M_n.
\]

We say that a subvariety $C\subset M_n$ is \emph{affine contractible} if it is a submonoid for the matrix multiplication and there exists an integer $m\in\mathbb{N}$ and an embedding $f_C: \af^{m} \to M_n$ such that $f_C(0)=\mathrm{Id}$, and $C= f_C(\af^{m})$. In other words, $C$ is an image of affine embedding to $M_n$ containing the identity. In particular, we are interested in the affine contractible subvariety consisting of computations of elementary matrices.

\begin{lemma}[Matrix Retraction]
	\label{Mretract}
	Let $n\in\mathbb{N}$, let $X$ be a variety and let $A\subset X$ be a subvariety. Suppose that there exists an affine contractible subvariety $C\subset M_n$ acting on $X$, and that there exists a morphism $T: X\to C$ such that $xT(x)\in A$ and $T(a)=\mathrm{Id}$ for all $a\in A$. Then, the morphism $r:X\to A$ defined by $x\mapsto xT(x)$ satisfies $r\circ i=\mathrm{Id}_A$ and $i\circ r$ is homotopic to $\mathrm{Id}_X$. Consequently, $ A\cong X $ in $\mathcal{H}(S)$.
\end{lemma}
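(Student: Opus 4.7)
The first identity $r \circ i = \mathrm{Id}_A$ is immediate from the hypothesis: for any $a \in A$ one has $r(a) = a \cdot T(a) = a \cdot \mathrm{Id} = a$. All the real content of the lemma is producing an explicit $\af^1$-homotopy between $i \circ r$ and $\mathrm{Id}_X$, and the plan is to leverage the linear contraction of $C$ packaged inside $f_C$.

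Since $f_C : \af^m \to M_n$ is an embedding with image $C$, it identifies $\af^m$ with $C$ as a scheme; in particular the morphism $T: X \to C$ lifts uniquely to some $\tilde T : X \to \af^m$ with $f_C \circ \tilde T = T$, and the condition $T(a) = \mathrm{Id} = f_C(0)$ for $a \in A$ forces $\tilde T(a) = 0$. I then define
\[
H: X \times \af^1 \longrightarrow X, \qquad (x,t) \mapsto x \cdot f_C\bigl(t\,\tilde T(x)\bigr),
\]
which is well defined because $f_C(t\tilde T(x)) \in C$ and $C$ acts on $X$. At $t = 0$ this gives $H(x,0) = x \cdot f_C(0) = x \cdot \mathrm{Id} = x$, while at $t = 1$ it gives $H(x,1) = x \cdot T(x) = (i \circ r)(x)$, so $H$ is the desired $\af^1$-homotopy from $\mathrm{Id}_X$ to $i \circ r$.

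Combining the strict identity $r \circ i = \mathrm{Id}_A$ with the $\af^1$-homotopy $i \circ r \simeq \mathrm{Id}_X$ realizes $i : A \hookrightarrow X$ as a strong $\af^1$-deformation retract, and hence an isomorphism in $\mathcal{H}(S)$. The only delicate point worth flagging is the existence of the lift $\tilde T$: it relies on interpreting ``embedding'' as a closed immersion so that $f_C$ restricts to an isomorphism $\af^m \xrightarrow{\sim} C$. If one prefers not to invert $f_C$ explicitly, exactly the same homotopy can be written as $(x,t) \mapsto x \cdot f_C(t\, f_C^{-1}(T(x)))$, which rests on the same hypothesis.
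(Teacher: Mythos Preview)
Your proof is correct and follows essentially the same approach as the paper: both lift $T$ through $f_C$ to obtain a map into $\af^m$ (the paper writes $T(x)=f_C(\mathbf{a}_x)$, your $\tilde T(x)$ is precisely $\mathbf{a}_x$) and then use the linear scaling $(x,t)\mapsto x\cdot f_C(t\,\tilde T(x))$ as the explicit $\af^1$-homotopy. Your added remark about needing $f_C$ to be an isomorphism onto its image is a fair observation, but the argument is otherwise identical to the paper's.
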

\begin{proof}
	We only need to show that the morphism $r': X\to X$ defined by $x\mapsto xT(x)$ is $\af^1$-homotopic to the identity. We just set $T(x)=f_C(\mathbf{a}_x)$ and define a homotopy $F: X\times \af^1 \to X$ by $(x,t)\mapsto xf_C(t\mathbf{a}_x) $. A direct computation shows that $F$ does the trick.
\end{proof}

To fix the ideas, recall that the symplectic group of size $2n$ is defined as the subgroup of invertible matrices satisfying

\[
A^TJ_{2n}A=J_{2n}
\]
where $J_{2}=\begin{bmatrix}
	0&1\\
	-1& 0
\end{bmatrix}$ and $J_{2n}:=\mathrm{diag}(J_2,J_{2n-2})$ is defined by induction.
Now we consider the morphism $p: \Sp_{2n}\to \af^{2n-1}$ defined by $p(A)=(A_{12},\ldots, A_{1,2n})$.
\begin{lemma}
	We have an isomorphism $p^{-1}(0)\cong \Gm \times \Sp_{2n-2}\times \af^{2n-1}$, yielding a weak equivalence $p^{-1}(0)\cong \Gm \times \Sp_{2n-2} $.
\end{lemma}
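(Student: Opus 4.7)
The plan is to parametrize $p^{-1}(0)$ explicitly by solving the symplectic relation $A^{T}J_{2n}A=J_{2n}$ under the constraint that the first row of $A$ has the form $(a,0,\ldots,0)$, and to read off the desired product decomposition. First I would decompose $A$ in a $2+(2n-2)$ block form
\[
A=\begin{bmatrix} P & Q\\ R & S\end{bmatrix},\qquad
P=\begin{bmatrix} a & 0\\ c & d\end{bmatrix},\qquad Q=\begin{bmatrix} 0\\ q\end{bmatrix},
\]
where the shape of $P$ and $Q$ encodes the hypothesis $A_{1j}=0$ for $2\leq j\leq 2n$; here $q$ is a row vector of length $2n-2$ and $R$ has two columns $r_{1},r_{2}$ of length $2n-2$. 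Writing $J_{2n}=\mathrm{diag}(J_{2},J_{2n-2})$, the equation $A^{T}J_{2n}A=J_{2n}$ decomposes into the three block identities
\[
P^{T}J_{2}P+R^{T}J_{2n-2}R=J_{2},\quad P^{T}J_{2}Q+R^{T}J_{2n-2}S=0,\quad Q^{T}J_{2}Q+S^{T}J_{2n-2}S=J_{2n-2}.
\]

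Next I would compute each term using the explicit shape of $P$ and $Q$. A direct calculation gives $P^{T}J_{2}P=ad\cdot J_{2}$ and $Q^{T}J_{2}Q=0$, so the third identity collapses to $S\in\Sp_{2n-2}$. The first identity then reads $R^{T}J_{2n-2}R=(1-ad)J_{2}$, and the middle identity reads $R^{T}J_{2n-2}S=\bigl[\begin{smallmatrix} -aq\\ 0\end{smallmatrix}\bigr]$. Since $J_{2n-2}S$ is invertible, the second row of this forces $r_{2}=0$ and the first determines $r_{1}$ uniquely as $r_{1}=-a\,J_{2n-2}S^{-T}q^{T}$. With $R=[r_{1},\,0]$, the entry $(1,1)$ of $R^{T}J_{2n-2}R$ automatically vanishes by skew-symmetry, and the first block equation reduces to $ad=1$, i.e.\ $d=a^{-1}$ and $a\in\Gm$.

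Therefore the free parameters of $p^{-1}(0)$ are exactly $a\in\Gm$, $c\in\af^{1}$, $q\in\af^{2n-2}$, and $S\in\Sp_{2n-2}$, which assembles into an isomorphism of schemes
\[
p^{-1}(0)\;\cong\;\Gm\times\af^{1}\times\af^{2n-2}\times\Sp_{2n-2}\;\cong\;\Gm\times\Sp_{2n-2}\times\af^{2n-1}.
\]
Finally, projecting away the contractible $\af^{2n-1}$-factor yields the asserted weak equivalence $p^{-1}(0)\simeq\Gm\times\Sp_{2n-2}$ in $\mathcal{H}(S)$.

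I do not expect a genuine obstacle here: the argument is essentially a bookkeeping of the symplectic relations. The only subtle point is verifying that the map assigning $(a,c,q,S)$ to $A$ (together with the determined $R$) is in fact an isomorphism of varieties and not merely a bijection on points; this follows because the formula for $r_{1}$ is polynomial in $a$, $q$ and the entries of $S^{-T}$ (which are polynomial in the entries of $S$ on $\Sp_{2n-2}$, since $S^{-1}=-J_{2n-2}S^{T}J_{2n-2}$). Once this is observed, the inverse morphism is manifestly algebraic.
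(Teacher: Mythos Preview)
Your proof is correct and follows essentially the same approach as the paper: both arguments block-decompose $A$ in $2+(2n-2)$ form, expand the three block identities coming from $A^{T}J_{2n}A=J_{2n}$, and read off that the bottom-right block is symplectic, the second column of $R$ vanishes, the first is determined, and $ad=1$. Your added remark that the resulting bijection is an isomorphism of varieties (using $S^{-1}=-J_{2n-2}S^{T}J_{2n-2}$ to see polynomiality) makes explicit a point the paper leaves implicit.
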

\begin{proof}
	Let $A\in p^{-1}(0)$ be of the form 
	\[ \begin{bmatrix}
		\begin{bmatrix}
			\lambda & 0\\
			a & b
		\end{bmatrix} &
		\begin{bmatrix}
			0\\
			\mathbf{v}
		\end{bmatrix}\\
		\begin{bmatrix}
			\mathbf{w}_1^T & \mathbf{w}_2^T\\
		\end{bmatrix} & M
	\end{bmatrix} \]
	Using the fact that $A^TJ_{2n}A=J_{2n}$, we obtain that $M\in \Sp_{2n-2}$ together with equations
	\[\lambda b J_2+ \begin{pmatrix}
		\mathbf{w}_1\\
		\mathbf{w}_2
	\end{pmatrix} J_{2n-2}\begin{pmatrix}
		\mathbf{w}_1^T & \mathbf{w}_2^T\\
	\end{pmatrix} = J_2 
	\] 
	and
	\[ \begin{pmatrix}
		\lambda \mathbf{v}\\
		0
	\end{pmatrix}+
	\begin{pmatrix}
		\mathbf{w}_1\\
		\mathbf{w}_2
	\end{pmatrix} J_{2n-2}M=0.
		\]
	By the second equality, we can conclude that 
	\[\mathbf{w}_1= -\lambda \mathbf{v} (J_{2n-2}M)^{-1}, \mathbf{w}_2=0\]
	Therefore 
	\[\begin{pmatrix}
		\mathbf{w}_1\\
		\mathbf{w}_2
	\end{pmatrix} J_{2n-2}\begin{pmatrix}
		\mathbf{w}_1^T & \mathbf{w}_2^T\end{pmatrix} =0
	\]
	and $ b=\lambda^{-1}$. Hence $A$ is uniquely determined by $\lambda,a,v$ and $M$. 

\end{proof}

Since $p^{-1}(0)$ is a complete intersection in $\Sp_{2n}$, its normal bundle admits an obvious trivialization (i.e. $A_{12}=\ldots=A_{1,2n}=0$). We then obtain 
\[
 \Sp_{2n}/p^{-1}(\afnz{2n-1})\cong p^{-1}(0)_+\wedge \Sigma \afnz{2n-1}.
 \]
and a cofiber sequence 
\begin{equation}\label{eqn:symplecticcofiber} 
p^{-1}(\afnz{2n-1})_+\to \Sp_{2n+} \to \Sp_{2n}/p^{-1}(\afnz{2n-1})\cong (\Gm \times \Sp_{2n-2})_+\wedge \Sigma \afnz{2n-1}
\end{equation}

To decide the homotopy type of $ p^{-1}(\afnz{2n-1})$, we make use of lemma \ref{Mretract}. We consider the affine contractible subvariety $C$ of $\Sp_{2n}$ given by $f: \af^{2n-1}\to \Sp_{2n}$ defined by
\[ (a, \mathbf{w})\mapsto \begin{bmatrix}
	\begin{bmatrix}
		1 & 0\\
		a & 1
	\end{bmatrix} &
	\begin{bmatrix}
		0\\
		-\mathbf{w}J_{2n-2}
	\end{bmatrix}\\
	\begin{bmatrix}
		\mathbf{w}^T & 0\\
	\end{bmatrix} & \mathbf{I}
\end{bmatrix}\] 

\begin{lemma}
	There exists an $\af^1$-homotopy retraction 
\[
r:p^{-1}(\afnz{2n-1}) \to \Sp_{2n-2}\times \afnz{2n-1}
\]
of the inclusion $\Sp_{2n-2}\times \afnz{2n-1}\subset p^{-1}(\afnz{2n-1})$, which is consequently a weak equivalence.
\end{lemma}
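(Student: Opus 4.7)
The plan is to apply Lemma \ref{Mretract} directly, taking $X := p^{-1}(\afnz{2n-1})$, $C$ the affine contractible subvariety of $\Sp_{2n}$ defined by the displayed $f$, and identifying the target $\Sp_{2n-2}\times \afnz{2n-1}$ with an explicit subvariety of $X$. For the embedding, I would send $(M,\mathbf{u})$ to the unique symplectic matrix $\iota(M,\mathbf{u})$ whose first row is $(1,\mathbf{u})$, whose first column is $(1,0,\ldots,0)^T$, whose bottom-right $(2n-2)\times(2n-2)$ block equals $M$, and whose remaining entries (second row and second column beyond the first two coordinates) are determined uniquely from $\mathbf{u}$ and $M$ by the relations $A^TJ_{2n}A = J_{2n}$, mimicking the normal-form computation from the proof of the preceding lemma.

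The key step is to construct $T: X \to C$. Given $A\in X$ with first row $(A_{11},\mathbf{u})$, I would choose $(a(A),\mathbf{w}(A))$ so that right-multiplication $A\cdot f(a,\mathbf{w})$ simultaneously kills the entries in positions $(i,1)$ for $i\geq 2$ and normalises the $(1,1)$-entry. A direct computation, using the explicit description of right multiplication by $f(a,\mathbf{w})$ on the columns of $A$, turns these requirements into a linear system of size $2n-1$ in the unknowns $(a,w_1,\ldots,w_{2n-2})$. The coefficient matrix is assembled from the columns of $A$ indexed by $2,\ldots,2n$ together with the vector $\mathbf{u}$; when this matrix is invertible, the unique solution $(a,\mathbf{w})$ depends regularly on $A$, and $T$ restricts to the identity on $\iota(\Sp_{2n-2}\times \afnz{2n-1})$ by construction because on that subvariety the trivial solution already works.

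The main obstacle I anticipate is the global solvability of this system: its determinant is a minor of $A$ that can vanish on a proper closed subvariety of $X$ (already at $A=J_{2n}$, for example). To bypass this I would work locally: since $\mathbf{u}\neq 0$ on $X$, the Zariski-open subvarieties $U_i = \{u_i\neq 0\}$ cover $X$, and on each $U_i$ one can redesign the linear system by pivoting on the coordinate $u_i$ so that the relevant coefficient matrix is invertible. The resulting local morphisms $T|_{U_i}$ differ by the action of different elementary transformations; the fact that $C$ is affine contractible (via the homotopy of Lemma \ref{Mretract}) allows these local retractions to be glued up to $\af^1$-homotopy into a single $T$ globally defined on $X$, taking values in $C$.

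Once $T$ is in place, Lemma \ref{Mretract} produces the retraction $r(A) = A\cdot T(A)$, which by construction satisfies $r\circ \iota = \mathrm{Id}$ on $\Sp_{2n-2}\times\afnz{2n-1}$, and the same lemma supplies the $\af^1$-homotopy from $\iota\circ r$ to $\mathrm{Id}_X$ via the contraction of $C$ to the identity matrix. This directly gives the desired $\af^1$-homotopy retraction and hence the weak equivalence $p^{-1}(\afnz{2n-1})\simeq \Sp_{2n-2}\times\afnz{2n-1}$ in $\mathcal{H}(S)$.
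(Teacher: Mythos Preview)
Your identification of the target subvariety and the overall strategy via Lemma \ref{Mretract} is correct, and you are right that the main difficulty is that the linear system defining $T$ is not globally solvable. However, your proposed fix does not work. Lemma \ref{Mretract} requires an honest morphism $T:X\to C$ of schemes; having local morphisms $T_i:U_i\to C$ that are merely $\af^1$-homotopic on overlaps does not produce such a global $T$. Affine contractibility of $C$ tells you that any two maps into $C$ are homotopic, but it does not let you patch them into a single regular map---there is no cocycle condition being checked, and no mechanism (partition of unity, descent, etc.) available here to do the gluing in the category of schemes. So as written the argument has a genuine gap at exactly the point you flagged.

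The paper resolves this obstruction differently, and the idea is worth internalising. Rather than working on $X$ itself, one replaces $X$ by the pullback $W_{2n}:=Q_{2(2n-1)-1}\times_{\afnz{2n-1}}X$ along the standard $\af^1$-equivalence $Q_{2m-1}\to\afnz{m}$. A point of $W_{2n}$ is a pair $(\mathbf{x},A)$ with $\mathbf{x}\cdot\mathbf{u}^T=1$, where $\mathbf{u}$ is the tail of the first row of $A$. This auxiliary vector $\mathbf{x}$ is precisely the global datum that lets one write down a regular transformation $T_1\in C$ (depending polynomially on $(\mathbf{x},A)$) which normalises the $(1,1)$-entry to $1$; no local choices or gluing are needed. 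After that, a second elementary transformation $T_2$ (this time acting by \emph{left} multiplication, again landing in an affine contractible submonoid) clears the first column below the diagonal. Two applications of Lemma \ref{Mretract}, one for each step, then give the retraction. So the missing ingredients in your approach are (i) the passage to $Q_{2m-1}$ to obtain globally defined coefficients, and (ii) splitting the reduction into a right-multiplication step followed by a left-multiplication step rather than attempting both at once.
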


\begin{proof}
	Let $Q_{2m-1}\to \afnz{m}$ be the affine bundle 
\[
\Spec K[a_1,\ldots,a_m, x_1,\ldots,x_m]/(\mathbf{a}\cdot\mathbf{x}^T-1)\xr{(a_1,\ldots,a_m)} \afnz{m}.
\] 
For any variety $V$ over $\afnz{m}$, we can consider $QV:= Q_{2m-1}\times_{\afnz{m}}V$.  Since the projection $Q_{2m-1}\to \afnz{m}$ is an affine bundle, the same is true for the projection $QV\to V$ and it follows that the latter is a weak equivalence. This applies in particular to $V=p^{-1}(\afnz{2n-1})$ and we denote by $W_{2n}$ the scheme $Qp^{-1}(\afnz{2n-1})$.

Let $\wt{M}=(\mathbf{x},M)\in W_{2n}$ be such that $M\in p^{-1}(\afnz{2n-1})$ is of the form 
	\[M=\begin{pmatrix}
		a& \mathbf{v}\\
		\mathbf{w}^T & M_1
	\end{pmatrix}\]
with $\mathbf{x}\cdot \mathbf{v}^T=1$. We define the transformation matrix $T_1(\wt{M})\in C$ to be
	\[ \begin{bmatrix}
		\begin{bmatrix}
		1 & 0\\
		(a-1)x_1 & 1
	\end{bmatrix} &
	\begin{bmatrix}
		0\\
		-(a-1)\mathbf{x}_2J_{2n-2}
	\end{bmatrix}\\
	\begin{bmatrix}
		(a-1)\mathbf{x}_2^T & 0\\
	\end{bmatrix} & \mathbf{I}
\end{bmatrix} \]
where $(x_1, \mathbf{x}_2) =\mathbf{x} $. We obtain
\[ M':=MT_1(\wt{M})=\begin{pmatrix}
	1& \mathbf{v}'\\
	\mathbf{w}'^T & M'_1
\end{pmatrix}\]
Since $ \mathbf{v}'=(v_1, \mathbf{v}_2 - v_1(a-1)\mathbf{x}_2J_{2n-2} ) $ is still in $ \afnz{2n-1} $, it follows that $M'\in p^{-1}(\afnz{2n-1}) $. For the same reason, $p^{-1}(\afnz{2n-1})$ is stable under multiplication by $C$.

Similarly, we can define 
\[T_2(M')= \begin{bmatrix}
	\begin{bmatrix}
	1 & 0\\
	-w'_1 & 1
\end{bmatrix} &
\begin{bmatrix}
	0\\
	\mathbf{w'}_2J_{2n-2}
\end{bmatrix}\\
\begin{bmatrix}
	-\mathbf{w'}_2^T & 0\\
\end{bmatrix} & \mathbf{I}
\end{bmatrix}\] and after multiplying, we get
\[ M''= T_2(M')M' = \begin{pmatrix}
	1& \mathbf{v}''\\
	0 & M''_1
\end{pmatrix}\]
As in the previous lemma, we find that $M''$ must have the form
\[ \begin{bmatrix}
	\begin{bmatrix}
		1 & v_1''\\
		0 & 1
	\end{bmatrix} &
	\begin{bmatrix}
		\mathbf{v''}\\
		0
	\end{bmatrix}\\
	\begin{bmatrix}
	0 & \mathbf{u} \\
	\end{bmatrix} & M_2
\end{bmatrix} \]
where $\mathbf{u}=\mathbf{v''} (J_{2n-2}M_2)^{-1}$ and $M_2\in \Sp_{2n-2}$. This also yields an embedding $i: \Sp_{2n-2}\times \afnz{2n-1}\hookrightarrow p^{-1}(\afnz{2n-1}) $. And by our construction, using lemma \ref{Mretract}, we obtain a $\af^1$-homotopy retraction $r:W_{2n} \to \Sp_{2n-2}\times \afnz{2n-1} $.
\end{proof}

Therefore the cofiber sequence \eqref{eqn:symplecticcofiber} now becomes 
\[ (\Sp_{2n-2}\times \afnz{2n-1})_+\to \Sp_{2n+} \to (\Gm \times \Sp_{2n-2})_+\wedge \Sigma \afnz{2n-1}\]

Finally, we consider the following diagram
\[\begin{tikzcd}[column sep=tiny]
	{(\Sp_{2n-2}\times \afnz{2n-1})_+} & {(\Sp_{2n-2}\times \af^{2n-1})_+} & {\Sp_{2n-2+}\wedge\Sigma \afnz{2n-1})} \\
	{(\Sp_{2n-2}\times \afnz{2n-1})_+} & {\Sp_{2n+}} & {(\Gm \times \Sp_{2n-2})_+\wedge \Sigma \afnz{2n-1}} \\
	{(\Sp_{2n-2}\times \af^{2n-1})_+} & {\Sp_{2n+}} & {\Gm\wedge\Sp_{2n-2+}\wedge\Sigma \afnz{2n-1})}
	\arrow[Rightarrow, no head, from=1-1, to=2-1]
	\arrow[from=1-1, to=1-2]
	\arrow[from=1-2, to=1-3]
	\arrow[from=1-2, to=2-2]
	\arrow[from=2-1, to=3-1]
	\arrow[from=3-1, to=3-2]
	\arrow[Rightarrow, no head, from=2-2, to=3-2]
	\arrow[from=2-1, to=2-2]
	\arrow[from=2-2, to=2-3]
	\arrow[dashed, from=1-3, to=2-3]
	\arrow[dashed, from=2-3, to=3-3]
	\arrow[from=3-2, to=3-3]
\end{tikzcd}\]
Using the bottom line, the octahedral axiom, and $\Gm\wedge\Sigma \afnz{2n-1}\cong \afnz{2n} $ we obtain a cofiber sequence
\[ \Sp_{2n-2+}\to \Sp_{2n+} \to \Sp_{2n-2+}\wedge \afnz{2n}.\]

\bibliographystyle{unsrt}
\bibliography{references}

\end{document}